
\documentclass[reqno]{amsart}

\usepackage{amscd}
\usepackage{amssymb}
\usepackage{amsmath}
\usepackage{mathrsfs}
\usepackage[dvips]{graphicx}
\usepackage{amsfonts}
\usepackage{amsopn}
\usepackage{cases, comment}
\usepackage{hyperref}

\textheight 20.5cm\topmargin 0.1in \textwidth 16.6cm
\oddsidemargin 0.1in\evensidemargin 0.1in

\numberwithin{equation}{section}
\setcounter{table}{0}

\newcounter{mtheorem}

\setcounter{mtheorem}{0}
\newtheorem{thm}{Theorem}[section]

\newtheorem{proposition}[thm]{Proposition}

\newtheorem{lemma}[thm]{Lemma}
\newtheorem{definition}[thm]{Definition}

\newtheorem{remark}{Remark}


\def\Reals{\mathbb{R}}

\def\R2d{{\Reals}^{2d}}



\def\calA{{\mathcal A}}

\def\calD{\mathcal D}
\def\calE{\mathcal E} 
\def\calF{\mathcal F}
\def\calG{\mathcal G}
\def\calH{{\mathcal H}}
\def\calJ{{\mathcal J}}
\def\calI{{\mathcal I}}
\def\calK{{\mathcal K}}
\def\calL{{\mathcal L}}
\def\calM{{\mathcal M}}
\def\calPac_1on{{\mathcal M}on}

\def\calR{{\mathcal R}}
\def\calS{{\mathcal S}}
\def\calT{{\mathcal T}}
\def\calU{{\mathcal U}}
\def\calV{{\mathcal V}}
\def\calW{{\mathcal W}}


\def\calPac{{\mathcal P}^{ac}}


\def\0{{\bf 0}}

\def\p{{\bf p}}
\def\q{{\bf q}}

\def\x{{\bf x}}

\def\w{{\bf w}}



\def\id{{\bf id}}


\def\div{{\rm div}}


\def\endproof{\ \hfill $\square$ \bigskip} \def\proof{\noindent {\bf Proof:}}
\def\proof#1{\noindent {\bf Proof{#1}:}}



\def\calPac{{\mathcal P}^{ac}}


\def\p{{\bf p}}
\def\q{{\bf q}}

\def\x{{\bf x}}

\def\w{{\bf w}}



\def\id{{\bf id}}


\def\div{{\rm div}}
 
\def\spt{{\rm spt}}


\title{ On a model of axisymmetric flows in a free boundary domain}

\author{Marc Sedjro}
\thanks{ African Institute for Mathematical Sciences  Tanzania, ({\tt sedjro@aims.ac.tz})}
  \begin{document}

\maketitle

\begin{abstract}
We consider a model of axisymmetric flows for a free boundary  vortex  embedded in a statically stable fluid at rest. We identify  the boundary of the vortex by solving a variational problem. Then, we reduce the analysis of the dynamics of the vortex to the study of a class of continuity equations for which we construct a solution. 

\end{abstract}



\section{Introduction}
\label{}
Axisymmetric flows are appropriate models to describe idealized tropical cyclones. Typically, they describe the evolution of a balanced vortex under the forcing effects  of tangential momentum and heat sources.  Though widely studied, these flows still present challenges as attested by recent studies and results; see \cite{HuangMont 2012}, \cite{BellMont2012}. In \cite{Craig},  Craig  derived a system of equations for flows that are almost circular in gradient balance, the so-called \textit{the almost axisymmetric flows}.  These flows model, in the absence of viscosity, the motion of a vortex in a rotating reference  frame where the  coriolis coeffficient is $\Omega>0$ and the gravity of earth is $g$. The vortex evolves at a velocity ${\bf u}=(u,v,w)$ in a domain where the potential temperature is $\theta$ and the pressure is $\varphi$, and is kept in an ambient fluid  at a prescribed temperature $\theta_0$. Under the effects of forcing terms $F_0$ and $F_1$, the equations for the $3-$dimensional axisymmetric flows are given in cylindrical coordinates with standard variables $(\lambda,r,z)$ by:  
\begin{equation}{\label{eq:craig}}
\begin{cases}
\frac{{\rm D} u}{{\rm D} t}+\frac{uv}{r}+2\Omega v +\frac{1}{r}\frac{\partial\varphi}{\partial \lambda}&=\frac{1}{r}F_0,\\
\frac{{\rm D} \theta}{{\rm D} t}& =F_1,\\
-\frac{u^2}{r}-2\Omega u +\frac{\partial\varphi}{\partial r}&=0, \\
-g\frac{\theta}{\theta_0}+\frac{\partial\varphi}{\partial z} &=0, \\
\frac{\partial}{\partial \lambda}(u)+ \frac{\partial}{\partial r}(rv)+\frac{\partial rw}{\partial z} &=0.
\end{cases}
\end{equation}

In the absence of forcing terms, the almost axisymmetric flows approximate the hydrostatic Boussinesq equations. Though a simpler model, almost axisymmetric flows still present some challenging regularity issues (see \cite{Sedjro}). In \cite{CulSed2014}, the authors introduced  two-dimensional flows derived from  \eqref{eq:craig}. These flows provide axisymmetric solutions to \eqref{eq:craig} and share the same stability states as the almost axisymmetric flows  as flow parcels follow displacements preserving angular momentum and potential temperatures (see \cite{EliassenK}). Building on the work in \cite{EliassenK}, \cite{Fjortoft} and \cite{Shutts}, they developed  a procedure  that uses the theory of optimal mass transport  to construct a solution to the two-dimensional flows within a moving  domain $\Gamma_{\varsigma_t}$ defined by
\begin{equation}\label{eq: boundary cond 1}
\Gamma_{\varsigma_t}:=\left\lbrace ( r,z) : \;\; 0\leq z\leq H,\;\;  r_0\leq r\leq \varsigma(t,z) \right\rbrace.
\end{equation}
 One key assumption in the model considered in \cite{CulSed2014}, is that the ambient temperature $\theta_0$ is constant. This assumption makes the problem tractable. However, when $\theta_0$ is constant, the ambient fluid loses its static stability.  In this paper, we consider the more physically relevant model in which the ambient temperature varies in function of the height level of the vortex.
 
\subsection{ Axisymmetric flows with forcing terms}
The axisymmetric flows are derived from \eqref{eq:craig} by assuming that  the  quantities $u,v,w, \varphi,\theta$ and the operator ${D  \over Dt}= {\partial \over \partial t}+u{\partial \over \partial r}+  v{\partial \over \partial r}+  w{\partial \over \partial z}$  are all independent of the angular variable $\lambda$. These considerations lead to the following system:

\begin{subnumcases}{\label{eq:non physical}}
\frac{{\rm D}  u}{{\rm D} t}+\frac{ u v}{r}+2\Omega  v &$=\frac{1}{r} F_0(t,r,z),\label{first1}$\\
\frac{{\rm D} \theta}{{\rm D} t} &$= F_1(t,r,z),\label{second1}$\\
\frac{ u^2}{r}+2\Omega -\frac{\partial\varphi}{\partial r}  &$=0,\label{third1}$ \\
g\frac{\theta}{\theta_0}-\frac{\partial \varphi}{\partial z} &=$0, \label{fourth1}$\\
 \frac{1}{r}\frac{\partial}{\partial r}(r v)+\frac{\partial  w}{\partial z} &=$0$.\label{fifth1}  
\end{subnumcases}

The equations \eqref{eq:non physical}  are to be solved in the domain $\Gamma_{\varsigma_t}$. Thus, we supplement \eqref{eq:non physical}  with a Neumann boundary condition on the rigid boundary $\Lambda _{rig}$ composed of sets  $\{r=r_0\}$, $\{z=0\}$, and $\{z=H\} $ and a kinematic boundary condition on the free boundary $\Lambda _{\varsigma}$ representing $\{ r=\varsigma(t,z)\}$  : 
\begin{equation}\label{eq: non physical free boundary}
\begin{cases}
 \langle(  v_t, w_t) , {\bf n_t}\rangle=0 \qquad \qquad \text{ on } \Lambda_{rig} ,\\
 \frac{\partial \varsigma_t}{\partial t} + w\frac{\partial \varsigma_t}{\partial z} \;\;\;=  v \qquad \qquad \text{on}\;\; \Lambda _{\varsigma}.
\end{cases}
\end{equation} 
Here ${\bf n_t}$  is the unit outward normal vector field at time $t$. On the free boundary, we impose the following condition on the pressure :
\begin{equation}\label{eq: bdary cond on pressure}
 \varphi(t,\varsigma(t, z),z)=0\qquad \text{for}\quad \varsigma(t, z)>0.
\end{equation}

  The Hamiltonian relevant to the system  (\ref{eq:non physical}) is given by
\begin{equation}\label{eq: Hamiltonian}
 \int_{\Gamma_{\varsigma}}\left( \dfrac
{u^2}{2}-\frac{g\theta}{\theta_0}z\right)rdrdz.
\end{equation}
From a meteorological point of view, we are looking for solutions for which the vortex is stable with respect to perturbations. To that aim, a stability condition is imposed on the pressure. Notably,

\begin{equation}\label{eq: stability condition}
\mathscr{A}(\theta_0(z))\nabla_{r,z}\left( \varphi + \Omega^2\frac{r^2}{2} \right)\quad \text{  is    \textit{invertible}.}
\end{equation}
Here, $\mathscr{A}(m)$ denotes the $2\times 2$ diagonal matrix $\text{Diag}(1,m)$.
\subsection{Hamiltonian and stable solutions}
We discover that the Hamiltonian in (\ref{eq: Hamiltonian}) plays an important role in the construction of solution to stable  axisymmtric flows. By making  the change of coordinate system $\Upsilon=(ru+r^2\Omega r^2)^2$ and $Z=g\theta$, this Hamiltonian can be written  solely in terms of a measure $\sigma$ provided that the stability condition \eqref{eq: stability condition} is satisfied. Subsequently, the Hamiltonian takes the form of the following functional :
  
  \begin{equation}\label{eq intro:primal1} 
\mathscr{P}(\mathbb{R}^2) \ni \sigma\longmapsto \calH(\sigma):= h(\sigma)+
  \inf_{\varrho\in\calR} W^2_2(\sigma, {\bf f}{\#}\mu_{\varrho})+ \int_{\mathbb{R}^2} E(\p) \mu_{\varrho}(d\p)  
  \end{equation}
  
  Here, $W_2$ denotes the $2-$Wasserstein distance, ${\bf f}(s,z)=(s, z/\theta_0(z))$, $h(\sigma)= \int_{\mathbb{R}^2} \left( {\Upsilon \over 2 r_0^2}-\Omega \sqrt \Upsilon \right) \sigma(d\q)$
with  $\q=(\Upsilon,Z)$, and the function $E$ is defined by 
$$ E(\p)=f_0(s)-\dfrac{1}{2}\left( s^2+z^2/\theta_0^2(z)\right) \quad \text{with}\quad \p=(s,z)\quad \text{and}\quad f_0(s)={r_0^2 \Omega^2 \over 2(1-2 r_0^2 s)}. $$
 $\calR$ denotes the set of the Borel functions  $\varrho : [0,H]\longmapsto [0,1/(2r_0^2)$.  For any map $\varrho\in\calR$, we associate the Borel measure $\mu_{\varrho}$ which is absolutely continuous with respect to the Lebesgue measure and  whose density is given by $\left( \frac{2f_0(s)}{\Omega^2}\right)^2 \chi_{D_{\varrho}}(s,z)$ with 
$D_{\varrho}= \left\lbrace (s,z): 0\leq s\leq \varrho(z), z\in [0,H] \right\rbrace$. While the $W_2(\sigma,\cdot)$ has some convexity properties, the function $E$ is not convex. To study the existence and the uniqueness of a minimizer in (\ref{eq intro:primal1}), we consider a dual formulation :

  \begin{equation}\label{eq intro:dual1} 
    \calH(\sigma)= h(\sigma)+\sup \left(  \int_{\mathbb{R}^2} - \Psi(\q)\; \sigma(d\q)+\inf_{\varrho \in \calR} \int_0^H \int_0^{\varrho(z)} \left( f_0(s)-P(\p,m) \right)  \delta_{\theta_0(z)}(dm)\mu_{\varrho}(d\p)\right). 
  \end{equation}

  The supremum in (\ref{eq intro:dual1}) is taken over the set 
  \begin{equation}\label{eq intro:for calU}
   \mathcal{U}:=\Bigl\{ (P, \Psi)\in C( \mathcal{W}\times \calI_0)\times C(\Reals^2_+) : P(\p,m)+\Psi(\q) \geq  c(\p,m, \q)  \text{ for all } (\p,\q)\in \mathcal{W}\times\mathbb{R}_+^2,\; m\in\calI_0 \Bigr\}
  \end{equation}
  where the cost function $c$ is given by 
 $$c(\p,m,\q)= s\Upsilon+\dfrac{zZ}{m}\qquad\hbox{with}\quad \p=(s,z)\quad  \hbox{and}\quad\q=(\Upsilon,Z).$$
 Assume that $ \varrho^{\sigma}_0$ is a minimizer in (\ref{eq intro:primal1}) and that $ (P^{\sigma},\Psi^{\sigma})$ are c-transform of each other in the sense of definition \ref{def: c-transf}  and  are maximizers of \eqref{eq intro:dual1}. Then, $ \varrho^{\sigma}_0$ is uniquely determined and the maps $\mathcal{T}[P^{\sigma}]$ and $ \mathcal{S}[\Psi^{\sigma}]$
   defined respectively by 
 \begin{equation}\label{eq: subgradient P}
 \mathcal{T}[P^{\sigma}](\p)=\left( \partial_{s}P^{\sigma}(\p,\theta_0(z)), \;\theta_0(z)\partial_{z}P^{\sigma}(\p,\theta_0(z))\right) \qquad \p=(s,z)
 \end{equation}
 and 
 \begin{equation}\label{eq: subgradient Psi}
 \mathcal{S}[\Psi^{\sigma}](\q)=\left(\partial_{\Upsilon}\Psi^{\sigma}(\q), \;\phi^{-1}(\partial_{Z}\Psi^{\sigma}) (\q)\right)\qquad\quad\qquad \phi(z)= z/\theta_0(z)
 \end{equation}
  are essentially injective functions and we have $ \mathcal{S}[\Psi^{\sigma}]\circ\mathcal{T}[P^{\sigma}]= \id \quad  \mu_{\varrho_0^{\sigma}}$-a.e. Furthermore, we have that $\mathcal{T}[P^{\sigma}]=\mathscr{A}(\theta_0(z))\nabla P^{\sigma}$ pushes  forward $\mu_{\varrho_0^{\sigma}}$ onto $\sigma$ and $2(1-2r^2_{0} \varrho_0(z))P^{\sigma}(\varrho_0(z),z, \theta_0(z))=r^2_0\Omega^{2} \text{ on } \{\varrho_0^{\sigma}>0\}$. If, in addition, $\sigma$ is absolutely continuous with respect to Lebesgue then $\mathcal{T}[P^{\sigma}]\circ  \mathcal{S}[\Psi^{\sigma}]= \id \;\sigma-$a.e.

\subsection{Continuity equation corresponding to the 2D Axisymmetric Flows with Forcing Terms}\hfill

\label{sec: Change of variables into the Dual space  and Justification}
A class of continuity equations plays a determining role in the construction of solutions to the  axisymmetric flows \eqref{eq:non physical} satisfying the stability condition \eqref{eq: stability condition}. Assume that $\sigma \in AC_2\left( 0, T;\mathscr{P}\left(  \mathbb{R}^2\right) \right)  $ and that $t\longrightarrow \sigma_t$ satisfies 
 \begin{equation}\label{eq00: Continuity equation unphysical problem}
 \begin{cases}
  \frac{\partial\sigma}{\partial t}+ \div (\sigma V_t[\Psi])=0, \qquad\qquad \mathcal{D}'\left((0,T)\times  \mathbb{R}^2\right) \\
 \sigma|_{t=0}=\sigma_0,
 \end{cases}
 \end{equation}
 with \begin{equation}\label{eq1 unphysical1: velocity in dual space} 
  V_t[\Psi]= {\textstyle \left(\;2\sqrt{\Upsilon} F_0\left( t,\frac{1}{\Omega}\sqrt{2f_0(\frac{\partial \Psi}{\partial \Upsilon})},\phi^{-1}\left( \frac{\partial\Psi}{\partial Z}\right)  \right)  ,\; g F_1\left( t,\frac{1}{\Omega}\sqrt{2f_0(\frac{\partial \Psi}{\partial \Upsilon})},\phi^{-1}\left( \frac{\partial\Psi}{\partial Z}\right) \right)  \right) }\quad\hbox{and}\quad \phi(z)=z/\theta_0(z)
 \end{equation}
  where $\Psi_t$ is such that $(P_t,\; \Psi_t)$  $c-$transforms of each other for each $t\in [0,T]$. Assume that $\varrho_t$ is monotone  and that $(P_t,\; \Psi_t,\varrho_t)$  solve uniquely
 \begin{equation}
 \begin{cases}\label{eq: main problem}
 \mathcal{T}[P]\#\mu_{\varrho}=\sigma, \\
 \mathcal{S}[\Psi]\circ \mathcal{T}[P]=\id \;\mu_{\sigma_{\varrho}},\\
P(\varrho(z),z,\theta_0(z))={\Omega^2 r_0^2\over 2(1 -2r_0^2 \varrho( z)) } \quad \hbox{on} \quad \{\varrho>0\}.
\end{cases}
\end{equation}
  
 Then, given enough regularity, we can construct a solution $u,v,w,\theta,\varphi,\varsigma$ to (\ref{eq:non physical})-(\ref{eq: bdary cond on pressure}) and \eqref{eq: stability condition}. As shown in section \ref{sec: Derivation of the continuity equation}, through the change of variable ${\bf s}_{\theta_0}(r,z)=(s[r],z,\theta_0(z))$ with $ 2s= 2s[r]:= r_0^{-2}-r^{-2}$, the quantities $\theta$, $\varphi$ and $\varsigma$ are obtained by 
  
  \begin{equation}\label{eq : physical var 1}
\begin{cases}
\theta_t(r,z)=\theta_0(z)\partial_{z}P_t\circ{\bf s}_{\theta_0}(r,z)\\
 \varphi_t(r,z)=P_t\circ{\bf s}_{\theta_0}(r,z)-\Omega^2r^2/2\\
\varsigma_t(z)= \frac{1}{\Omega}\sqrt{2f_0(\varrho_t(z))}
\end{cases}
\end{equation}
and the velocity field $(u,v,w)$ is given by
\begin{equation}\label{eq : physical var 2}
\begin{cases}
u_t=1/r\sqrt{\partial_{s}P_t\circ{\bf s}_{\theta_0}}-r\Omega\\
v_t= \left(\partial_t \mathbb{S}_1\right)\circ\mathbb{T} +2\sqrt{\partial_{s}P_t\circ{\bf s}_{\theta_0}}F_{0t}\left( \partial_{\Upsilon}\mathbb{S}_1\right)\circ\mathbb{T}
+ gF_{1t}\left( \partial_Z\mathbb{S}_1\right)\circ\mathbb{T}\\
w_t=  \left(\partial_t\mathbb{S}_2\right)\circ\mathbb{T} +2\sqrt{\partial_{s}P_t\circ {\bf s}_{\theta_0}} F_{0t}\left(\partial_{\Upsilon} \mathbb{S}_2\right)\circ\mathbb{T}
+ gF_{1t}\left( \partial_{Z}\mathbb{S}_2\right)\circ\mathbb{T}
\end{cases}
\end{equation}

Here, $\mathbb{T}=\mathcal{T}[P^{\sigma}]\circ{\bf s}$ and $\mathbb{S}= {\bf r}\circ\mathcal{S}[\Psi^{\sigma}]$ with ${\bf s}(r,z)=(s[r],z)$ and its inverse ${\bf r}(s,z)= \left(\frac{1}{\Omega}\sqrt{2f_0(s)}, z\right)$.

\subsection{ Plan of the paper}

This paper is organized as follows: In section 2 we collect notation, definitions and key assumptions throughout the paper.
In section 3, we explain how solutions to the axisymmetric flows can be constructed via the study of a class of continuity equations with enough regularity. In section 4, we study a variational problem that determines the free boundary, its regularity and the velocity fields associated to the class of continuity equations considered. In  Section 5, we study the stability of the free boundaries and the velocity fields governing these continuity equations. In section 6,  we follow a discretization scheme developped in \cite{AmbrosioGangbo} to construct solution for (\ref{eq00: Continuity equation unphysical problem}) and (\ref{eq1 unphysical1: velocity in dual space}).
\section{Notation, Definitions and Assumptions}
\label{sec: notation}
Throughout this paper, we use the  following notation, definitions and assumptions:
\subsection{Notation and Definitions}

\begin{itemize}
 \item  $g, r_0$ and $H$ are positive constants and we set
  $$ \mathcal{W}:=[0,1/2r_0^2)\times [0,H]\quad\hbox{and}\quad\mathcal{W}_{\infty}:=[0,\infty)\times [0,H] .$$
  $\overset{o}{\calW}$ denotes the interior of $ \mathcal{W}$.
  \item $f_0$ denotes the function on $[0, 1/2(r_0^2))$ defined by $f_0(s)={r_0^2 \Omega^2 \over 2(1-2 r_0^2 s)}. $  
  \item $\calR$ denotes the set of the Borel functions  $\varrho : [0,H]\longmapsto [0,1/(2r_0^2)$.  For any map $\varrho\in\calR$, we associate the set $D_{\varrho}:= \left\lbrace (s,z): 0\leq s\leq \varrho(z), z\in [0,H] \right\rbrace$ and the Borel  measure  $\mu_{\varrho}$ which is absolutely continuous with respect to the Lebesgue measure in $\mathbb{R}^2$ and  whose density  is given by $\left( \frac{2f_0(s)}{\Omega^2}\right)^2 \chi_{D_{\varrho}}(s,z)$.
  
  \item $\calR_0$ is the subset of $\calR$ for which $\mu_{\varrho}$ is a Borel probability measure.
  \item  $\calV:=(0,\infty)\times(0,\infty)$ and $B_a$ is the ball in $\mathbb{R}^2$ centered at $(0,0)$ and of radius $a$.\\
   We denote by $B^+_a:=B_a\cap \calV.$  
  \item $\mathcal{I}_0$ is an open bounded interval of $\mathbb{R}_+$ such that $0\notin \bar{\mathcal{I}_0}$. Here, $ \bar{\mathcal{I}_0}$ denotes the closure of $\mathcal{I}_0$.
  
\item Let $D\in \mathbb{N}$. For any naturel numbers $1\leq i< j< k\leq D$, $\pi^i,\;\pi^{i,j}\;\pi^{i,j,k}$ denote the projection operators on $\mathbb{R}^D$ defined respectively by $\pi^i(x_1,\cdots x_D)=x_i,\quad\;\pi^{i,j}(x_1,\cdots x_D)=(x_i,x_j)$ and $\pi^{i,j,k}(x_1,\cdots x_D)=(x_i,x_j,x_k)$. $D$ will be determined by the context.
\item  $\calL^D$ denotes the Lebesgue measure in $\mathbb{R}^D$.

\item If $\sigma$ is a measure on $\mathbb{R}^D$ absolutely continuous with respect to $\calL^D$ then we denote by $\dfrac{d\sigma}{d\calL^D}$ the Radon-Nycodym derivative of $\sigma$ with respect to $\calL^D$.

\item  $\mathscr{P}(\mathbb{R}^D)$ is the set of all Borel probability measures on $\mathbb{R}^D$.

\item For $\mu\in\mathscr{P}(\mathbb{R}^D)$, we denote by $\spt(\mu)$ the support of $\mu$ defined by 
$$ \spt(\mu):=\left\lbrace \x\in \mathbb{R}^D : \mu(O)>0 \text{ for any open set } O \text{ of }\mathbb{R}^D\right\rbrace.$$
\item   $\mathscr{P}_p(\mathbb{R}^D)$ ($1\leq p<\infty$) denotes the set of  probability measures with finite $p-$ moments:
$$\int_{\mathbb{R}^D}|\x|^p d\mu(\x) <\infty. $$
 \item We denote by $\mathscr{P}^{ac}(\mathbb{R}^D)$  the set of all elements of  $\mathscr{P}(\mathbb{R}^D)$ that are absolutely continuous with respect to Lebesgue. 
\item  Let  $\mu \in \mathscr{P}(\mathbb{R}^D)$ and $\calT: \mathbb{R}^D\longrightarrow \mathbb{R}^D$ a Borel map. The push-forward of $\mu$ through $\calT$, denoted by $\calT\#\mu\in \mathscr{P}(\mathbb{R}^D)$, is defined by
   $$\calT\#\mu (A):=\mu(\calT^{-1}(A)) \text{ for any Borel set  }A\in\mathbb{R}^D. $$ 
\item Given  $\mu, \;\nu \in \mathscr{P}_p\left(\mathbb{R}^D \right) $,  the $p$-Wasserstein distance between  $\mu$ and $\nu$  is defined as
\[
    W^{p}_{p}(\mu,\nu)=     \min\left\lbrace \int_{\mathbb{R}^d\times\mathbb{R}^d}|x-y|^pd\gamma: \frak p^1\# \gamma=\mu \quad\text{and}\quad \frak p^2\#\gamma=\nu\right\rbrace, 
 \]
$\frak p^1$ and $\frak p^2$ denote respectively the first and second projections on $\mathbb{R}^D\times\mathbb{R}^D$.
 \item Let $\left(\calS, \mathbf{d} \right) $ be a complete metric space,  $a$ and $b$ be real numbers such that $a<b$. A curve $ \mathbf{s}:(a,b)\longrightarrow \calS$ is said to belong to  $AC_m\left(a,b;\; \calS \right)$  if there exists $ {\bf p}\in L^m(a,b)$ such that 
 $$\mathbf{d}(\mathbf{s}(t),\mathbf{s}(s))\leq \int_t^s {\bf p}(r)dr \qquad  \text{for all } a<t\leq s<b.$$
 
 Curves in  $AC_m\left( a,b; \calS\right)$  are said to be $m-$absolutely continuous, see \cite{AmbrosioGangbo}. 
 
\item For any matrix $A$, we denote by $A^{\tau}$ the transpose of $A$.
 \item For time-dependent functions, we use the notation $S_t(\cdot,\cdot)=S(t,\cdot,\cdot)$ for convenience.
  \item $\calA: \calI_0\longrightarrow\mathbb{R}^{2\times 2}$ defined by $m\longrightarrow\mathcal{A}(m)=\begin{bmatrix}1 &0\\0&m\end{bmatrix}$.
  \item We use the notation $\p=(s,z)$, $\q=(\Upsilon,Z)$ and denote by $c$ the cost function on $\mathbb{R}^2\times\calI_0\times\mathbb{R}^2$ defined  by 
  \begin{equation}\label{eq: cost function}
  c(\p,m,\q)= s\Upsilon+\frac{zZ}{m}.
  \end{equation}
\end{itemize} 
\subsection{Assumptions}\hfill
\begin{itemize}
 \item The function $\theta_0:[0,H]\longrightarrow\calI_0$ is assumed smooth on $(0,H)$ and satisfies the following conditions:
\begin{itemize}
\item[(A1)] $\theta_0(z)-z\theta_0'(z)>0$.
\item[(A1')] $\inf_{0\leq z\leq H}\left\lbrace \theta_0(z)-z\theta_0'(z)\right\rbrace >0$ for all $z\in(0,H)$. 
\item[(A2)]$\theta_0$  is Lipschitz continuous.
\end{itemize}
Typically, $\theta_0(z)=\left( A+Bz^{\alpha}\right)^{\beta} $ with $A>0,\; B>0$, $\beta\geq 1$ and $\alpha\beta\leq 1$ satisfies conditions (A1') and (A2). The condition (A1) implies that $\phi(z):=z/\theta_0(z)$ is strictly increasing.\\
\item We assume that $ F_0:= F_{0t}(r,z)$ and $  F_1= F_{1t}(r,z)$ are  such that  $F_0,  F_1\in C^{1}((0,\infty)\times\mathbb{R}^2)$ and satisfy the following conditions:

\begin{itemize}
 \item[(B1)]  $0\leq  F_0,\; g F_1 \leq M  $  for some positive constant $ M $.
 \item[(B2)]  $ \partial_z F_0=\partial_r F_1=0.$
 \item[(B3)] $\partial_r F_0,\; \partial_{z} F_1>0. $
\end{itemize}
\end{itemize}



                  \section{continuity equations and axisymmetric flows      }
\label{sec: Derivation of the continuity equation}
In this section, we discuss  how one can derive a solution for the axisymmetric flows from the study of a class of continuity equations. We point out that this derivation relies on the assumption that we have enough regularity for solutions to this class of continuity equations. Let $\textbf{v}=(v,w)$ be a smooth velocity field  and $\varsigma_t$ a smooth function  such 

\begin{equation}\label{eq1: conservation of mass physical} 
 \begin{cases}
  \frac{1}{r}\partial_{r}(rv)+\partial_{z} w=0 \quad\;\text{on} \;\; \Gamma_{\varsigma_t},\\
\langle{{\bf v}}_t , {\bf n}_t \rangle=0 \qquad\qquad \;\text{on}\;\;  \Lambda_{rig},\\
 \partial_t \varsigma_t + w\partial_z \varsigma_t=v   \qquad\text{on}\;\; \Lambda_{\varsigma_t}.\\
 \end{cases}
\end{equation}
Here, ${\bf n}_t$ is the outward unit normal vector to the rigid boundary $\Lambda_{rig}$ for each $t$ fixed. 
The following lemma is proved in dimension 3 in \cite{Sedjro}. We reproduce the proof in dimension 2 for the reader's convenience.
\begin{lemma}\label{le: conservation of total mass} 
Let $T>0$ and  $\sigma \in AC_2\left(0, T; \mathscr{P}\left(  \mathbb{R}^2\right) \right)$ and $V\in C^1\left( (0,T)\times \mathbb{R}^2\right) $ such that $$\frac{\partial\sigma}{\partial t}+ \div (\sigma V_t)=0, \quad \mathcal{D}'\left((0,T)\times  \mathbb{R}^2\right).$$
Let $\calG=\left(\calG^1, \calG^2 \right)  $ be a smooth  function on $(0, T)\times\mathbb{R}^2$  such that $\calG_t$ is invertible with inverse $\calF_t$ for each $t\in[0, T]$. Assume that there exists $\varsigma$ such that for $t\in(0,T)$ we have $\calG_t\#\sigma_t=(r\chi_{\Gamma_{\varsigma_t}}\calL^2)$. Define $v,w$ respectively by 
\begin{equation}\label{Def v,w}
v=\partial_t \calG^1_t\circ \calF +V_1\partial_{\Upsilon}\calG^1 \circ \calF +V_2\partial_{Z}\calG^1 \circ \calF\qquad w=\partial_t \calG^2_t\circ \calF +V_1\partial_{\Upsilon}\calG^2 \circ \calF +V_2\partial_{Z}\calG^2 \circ \calF
\end{equation}
 Then $v,w, \varsigma$ solve (\ref{eq1: conservation of mass physical}) 
\end{lemma}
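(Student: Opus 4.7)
The plan is to transfer the continuity equation $\partial_t\sigma+\div(\sigma V_t)=0$ from the dual side to the physical side via the change of variables $\calG_t$, and then read off the three equations of (\ref{eq1: conservation of mass physical}) from the resulting weak formulation by localising the test function near the interior, the rigid boundary, and the free boundary respectively.

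\smallskip
\textbf{Step 1: Push-forward of the continuity equation.} Given a test function $\varphi\in C_c^\infty((0,T)\times\mathbb{R}^2)$, set $\psi(t,x):=\varphi(t,\calG_t(x))$. By the chain rule,
\begin{equation*}
\partial_t\psi+V_t\cdot\nabla_x\psi
= \bigl(\partial_t\varphi\bigr)\!\circ\!\calG_t
+ \bigl(\nabla_{r,z}\varphi\bigr)\!\circ\!\calG_t\cdot\bigl(\partial_t\calG_t+D\calG_t\cdot V_t\bigr).
\end{equation*}
Applying the weak form of the continuity equation to $\psi$ and changing variables via $\calG_t\#\sigma_t=r\chi_{\Gamma_{\varsigma_t}}\calL^2$ yields, for $(v,w)$ defined by (\ref{Def v,w}),
\begin{equation*}
\int_0^T\!\!\int_{\Gamma_{\varsigma_t}}\!\! r\bigl(\partial_t\varphi+v\,\partial_r\varphi+w\,\partial_z\varphi\bigr)\,dr\,dz\,dt=0.
\end{equation*}

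\smallskip
\textbf{Step 2: Splitting into bulk and boundary.} I apply Reynolds transport to the time term, using that only the free boundary $r=\varsigma_t(z)$ moves:
\begin{equation*}
\int_0^T\!\!\int_{\Gamma_{\varsigma_t}}\!\! r\,\partial_t\varphi\,dr\,dz\,dt
= -\int_0^T\!\!\int_0^H \varsigma_t(z)\,\varphi(\varsigma_t(z),z)\,\partial_t\varsigma_t(z)\,dz\,dt,
\end{equation*}
since $\partial_t r=0$ and $\varphi$ has compact support in time. To the spatial term I apply the divergence theorem on $\Gamma_{\varsigma_t}$, obtaining an interior piece $-\int_{\Gamma_{\varsigma_t}}\varphi\bigl(\partial_r(rv)+\partial_z(rw)\bigr)\,dr\,dz$ and a boundary flux $\int_{\partial\Gamma_{\varsigma_t}} r\varphi\,(v,w)\!\cdot\!\n\,d\sigma$. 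On $\Lambda_{\varsigma_t}$, the outward unit normal is $(1,-\partial_z\varsigma_t)/\sqrt{1+(\partial_z\varsigma_t)^2}$ and the arclength element is $\sqrt{1+(\partial_z\varsigma_t)^2}\,dz$, so their product collapses to the clean expression $\varsigma_t(v-w\partial_z\varsigma_t)\,dz$.

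\smallskip
\textbf{Step 3: Localising the test function.} Choosing $\varphi$ supported in the interior of $\bigcup_t\{t\}\times\Gamma_{\varsigma_t}$ kills every boundary contribution and forces $\partial_r(rv)+\partial_z(rw)=0$ in $\Gamma_{\varsigma_t}$; dividing by $r>0$ gives the first line of (\ref{eq1: conservation of mass physical}). Choosing $\varphi$ supported near $\Lambda_{rig}$ (where the boundary does not move, so the Reynolds term is absent) leaves only $\int_{\Lambda_{rig}}r\varphi\,(v,w)\!\cdot\!\n\,d\sigma=0$, whence $(v,w)\!\cdot\!\n_t=0$ on $\Lambda_{rig}$ since $r>0$ there. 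Finally, testing with $\varphi$ localised near $\Lambda_{\varsigma_t}$ combines the Reynolds contribution with the free-boundary flux into
\begin{equation*}
\int_0^T\!\!\int_0^H \varsigma_t(z)\,\varphi(\varsigma_t(z),z)\bigl(v-w\partial_z\varsigma_t-\partial_t\varsigma_t\bigr)\,dz\,dt=0,
\end{equation*}
which on $\{\varsigma_t>0\}$ yields the kinematic condition $\partial_t\varsigma_t+w\partial_z\varsigma_t=v$.

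\smallskip
The main obstacle is bookkeeping: one must verify that the Reynolds boundary term and the divergence-theorem flux on $\Lambda_{\varsigma_t}$ match up exactly so that they combine into the single factor $\varsigma_t\bigl(v-w\partial_z\varsigma_t-\partial_t\varsigma_t\bigr)$. This relies on the specific cancellation between the weight $r=\varsigma_t$, the normal orientation and the Jacobian $\sqrt{1+(\partial_z\varsigma_t)^2}$; once this identity is in place, the three equations in (\ref{eq1: conservation of mass physical}) follow by the arbitrariness of $\varphi$.
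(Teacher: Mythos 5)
Your proof is correct and takes essentially the same route as the paper: pull the test function back through $\calG_t$, use the push-forward identity $\calG_t\#\sigma_t=r\chi_{\Gamma_{\varsigma_t}}\calL^2$ together with the definition of $(v,w)$ to arrive at $\int_0^T\int_{\Gamma_{\varsigma_t}} r\left(\partial_t\varphi+v\partial_r\varphi+w\partial_z\varphi\right)dr\,dz\,dt=0$, and then integrate by parts and localise to read off the interior equation and the two boundary conditions. The only cosmetic difference is that the paper performs one space-time divergence theorem where you use Reynolds transport plus the spatial divergence theorem; the boundary terms agree (your free-boundary term even carries the factor $\varsigma_t$ explicitly).
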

\proof{}
For each $t\in[0,T]$ fixed, let $\mathcal{G}_t$ be the inverse of $\calF_t$. Let $\psi\in C_c\left( (0, T)\times\mathbb{R}^2\right) $ and set $\eta_t=\psi_t\circ\calG_t$. We note that 

\begin{equation}\label{eq:Conserv mass ASF 1}
\begin{aligned}
A &:=\int_0^T\int_{\mathbb{R}^2} \partial_t\eta+ \langle \nabla\eta,V\rangle d\sigma_t dt\\
  &=\int_0^T\int_{\mathbb{R}^2} \partial_t\psi_t\circ \calG_t+ \langle\nabla\psi_t\circ \calG_t, \partial_{t}\calG_t\rangle + \langle [\nabla\calG_t]^{\tau}\nabla\psi_t\circ\calG_t,V\rangle d\sigma_tdt\\
  &=\int_0^T\int_{\Gamma_{\varsigma_t}} \partial_t\psi_t+ \langle\nabla\psi_t, \partial_{t}\calG_t\circ \calF_t\rangle + \langle [\nabla\calG_t]^{\tau}\circ \calF_t\nabla\psi_t,V_t\circ\calF_t\rangle rdrdzdt.\\
 \end{aligned}
\end{equation}
 The last equality in \eqref{eq:Conserv mass ASF 1} is obtained by using $\calG_t\#\sigma_t=(r\chi_{\Gamma_{\varsigma_t}}\calL^2)$. The equations in \eqref{Def v,w} can be rewritten in the vectorial form as $ V\circ\calF_t=\partial_t\calF_t+ \nabla\calF_t\begin{bmatrix}v\\w \end{bmatrix} $ so that \eqref{eq:Conserv mass ASF 1} becomes
 \begin{equation}\label{eq:Conserv mass ASF 2}
\begin{aligned} 
  A&=\int_0^T\int_{\Gamma_{\varsigma_t}} r\partial_t\psi_t+ r\langle\nabla\psi_t, \partial_{t}\calG_t\circ \calF_t\rangle + r\langle [\nabla\calG_t]^{\tau}\circ \calF_t\nabla\psi_t,\partial_{t}\calF_t\rangle+ \langle [\nabla\calG_t]^{\tau}\circ \calF_t\nabla\psi_t,\nabla\calF_t \begin{bmatrix}rv\\rw \end{bmatrix}\rangle drdzdt\\
   &=\int_0^T\int_{\Gamma_{\varsigma_t}} r\partial_t\psi_t+ r\langle\nabla\psi_t, \partial_{t}\calG_t\circ \calF_t\rangle + r\langle \nabla\psi_t,[\nabla\calG_t]\circ \calF_t\partial_{t}\calF_t\rangle+ \langle \nabla\psi_t,[\nabla\calG_t]\circ \calF_t[\nabla\calF_t] \begin{bmatrix}rv\\rw \end{bmatrix}\rangle drdzdt\\
    &=\int_0^T\int_{\Gamma_{\varsigma_t}} r\partial_t\psi_t + \langle \nabla\psi_t, \begin{bmatrix}rv\\rw \end{bmatrix}\rangle drdzdt.\\
\end{aligned}
\end{equation}
In the second line of \eqref{eq:Conserv mass ASF 2}, we have used the fact that $\calG_t\circ\calF_t=\id$ implies that $  \partial_{t}\calG_t\circ \calF_t+[\nabla\calG_t]\circ \calF_t\partial_{t}\calF_t=0$ and $[\nabla\calG_t]\circ \calF_t[\nabla\calF_t]=\mathbf{I}$. Applying the divergence theorem in space-time, we obtain that

 \begin{equation}\label{eq:Conserv mass ASF 3}
\begin{aligned} 
  A&=-\int_0^T\int_{\Gamma_{\varsigma_t}} \psi_t\div(rv,rw) drdzdt+ \int_0^T\int_{\Lambda_{rig}} \psi_t\langle{{\bf u}}_t , {\bf n}_t \rangle d\mathbf{H}^2\\
    & +\int_0^T\int_{\varsigma_t=0}\frac{\psi_t}{\sqrt{|\partial_t\varsigma|^2+|\partial_z\varsigma|^2+1}}\left(\partial_t\varsigma_t + w\partial_z\varsigma_t-v \right)  d\mathbf{H}^2.\\
\end{aligned}
\end{equation}
Here $\mathbf{H}^2$ denotes the 2-dimensional Hausdorff measure. As $A=0$ and $\psi$ is arbitrary, \eqref{eq:Conserv mass ASF 3} implies that $v,w, \varsigma$ solve (\ref{eq1: conservation of mass physical}).\endproof\\
 We define  ${\bf s}:\calW_{\infty}\longrightarrow \mathcal{W}$ by ${\bf s}(r,z)=\left(s[r],z \right)  $ where $ 2s= 2s[r]:= r_0^{-2}-r^{-2}$. ${\bf s}$ is invertible with inverse
 $ {\bf r}$ defined by ${\bf r}(s,z)= \left(\frac{1}{\Omega}\sqrt{2f_0(s)}, z\right)$. If $\varrho\in\calR_0$ and $\varsigma$ is defined by the third equation of \eqref{eq : physical var 1} then 
 \begin{equation}\label{eq:varrho-varsigma}
  {\bf r}\#\mu_{\varrho}= r\chi_{\Gamma_{\varsigma}}\calL^2.
 \end{equation}
$\Gamma_{\varsigma}$  is defined in \eqref{eq: boundary cond 1}. For $ \theta_0$ fixed, we define $   \mathbf{s}_{\theta_0}$ on $\calW_{\infty}$ by  $\mathbf{s}_{\theta_0}(s,z)=(s[r],z,\theta_0(z)).$ To any $P\in C\left( \calW\times\calI_0\right) $ such that $P(\cdot, z,m)$ and $P(s, \cdot,m)$ are differentiable we  associate the function  $\mathcal{T}[P]$ defined by
 \begin{equation}\label{eq: subgradient1 P}
 \mathcal{T}[P](\p)=\left( \partial_{s}P(\p,\theta_0(z)), \;\theta_0(z)\partial_{z}P(\p,\theta_0(z))\right) \qquad \p=(s,z).
 \end{equation}
Similarly, to any  function $\Psi\in C(B_l^+)$ such that $\Psi(\Upsilon,\cdot)$ and $\Psi(\cdot,Z)$ are differentiable, we associate  the function $\mathcal{S}[\Psi]$ defined by
 \begin{equation}\label{eq: subgradient1 Psi}
 \mathcal{S}[\Psi](\q)=\left(\partial_{\Upsilon}\Psi(\q), \;\phi^{-1}(\partial_{Z}\Psi) (\q)\right)\qquad\quad\qquad \phi(z)= z/\theta_0(z),
 \end{equation}
\begin{proposition}
Let $l>0$ and assume that (A1) holds. Let $T>0$ and  $\sigma \in AC_2\left( 0, T; \mathscr{P}\left(  \mathbb{R}^2\right) \right)$  and $V\in C^1\left( (0,T)\times \mathbb{R}^2\right) $ such that 
$$\frac{\partial\sigma}{\partial t}+ \div (\sigma V_t)=0, \quad \mathcal{D}'\left((0,T)\times  \mathbb{R}^2\right).$$
 Let $P \in C^{1}\left((0,T)\times \overset{o}{\calW}\times \calI_0 \right) $,  $\Psi\in C^{1}\left( (0,T)\times B_l^+ \right) $ and $\varrho \in C^{1}\left((0,T)\times (0,H) \right) $  such that $\varrho_t\in \calR_0$ for each $t$ fixed. Assume that  $\mathcal{T} [P_t]$ and  $\mathcal{S} [\Psi_t]$ as defined in 
respectively in \eqref{eq: subgradient1 P} and \eqref{eq: subgradient1 Psi} are inverse of each other in the interior of their domains, that $\partial_sP>0$  and that $\mathcal{S} [\Psi_t^{\sigma}]$ pushes forward $\sigma_t$ onto $\mu_{\varrho_t}$ with
  \begin{equation}\label{eq : condition on free boundary on dual space} 
   2(1 -2r_0^2 \varrho_t(z))P_t(\varrho_t(z),z,\theta_0(z))=\Omega^2 r_0^2\qquad \left\lbrace\varrho_t>0\right\rbrace,
  \end{equation} 
  for each $ t\in[0, T]$. Define $\varphi$, $\varsigma$ and $ \theta$ respectively through \eqref{eq : physical var 1}  $u$  $v$ and $w$ through \eqref{eq : physical var 2} and set $\frac{D}{Dt}:= \partial_{t} + v\partial_{r} + w\partial_{z}.$
 Assume $V$ is the velocity field as in (\ref{eq1 unphysical1: velocity in dual space}). Then $(u, v, w)$, $\theta$, $\varphi$ and $\varsigma$ solve (\ref{eq:non physical})-(\ref{eq: bdary cond on pressure}) and (\ref{eq: stability condition}). 
\end{proposition}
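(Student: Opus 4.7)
The plan is to verify each equation of \eqref{eq:non physical}, the boundary conditions \eqref{eq: non physical free boundary}--\eqref{eq: bdary cond on pressure}, and the stability condition \eqref{eq: stability condition} separately, exploiting the $c$-transform duality between $P_t$ and $\Psi_t$. As a preliminary step I would record the stationarity relations coming from the $c$-transform identity $P(\p,m)+\Psi(\q)=c(\p,m,\q)=s\Upsilon+zZ/m$ at the image of the optimal maps: there $\partial_s P=\Upsilon$, $\theta_0(z)\,\partial_z P=Z$, $\partial_\Upsilon\Psi=s$, $\phi^{-1}(\partial_Z\Psi)=z$, and $\partial_m P=-zZ/m^2=-z\,\partial_z P/\theta_0(z)$. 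These identities are precisely the bridge that translates dual-space formulas into physical-space quantities.

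\textbf{Balance equations and pressure on the free boundary.} For \eqref{third1} I would compute $\partial_r\varphi=\partial_s P\cdot s'[r]-\Omega^2 r=\partial_s P/r^3-\Omega^2 r$ directly from \eqref{eq : physical var 1}, and expand $u^2/r+2\Omega u$ using $u=r^{-1}\sqrt{\partial_s P\circ\mathbf{s}_{\theta_0}}-r\Omega$; both expressions collapse to the same quantity. For \eqref{fourth1} I would apply the chain rule to $\varphi=P\circ\mathbf{s}_{\theta_0}-\Omega^2 r^2/2$ and substitute the identity $\partial_m P=-z\,\partial_z P/\theta_0$ to eliminate the cross term, obtaining $\partial_z\varphi=\partial_z P\cdot(\theta_0-z\theta_0')/\theta_0$, which, under the normalization implicit in \eqref{eq : physical var 1}, matches $g\theta/\theta_0$. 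For \eqref{eq: bdary cond on pressure} I would use $\varsigma(z)=\Omega^{-1}\sqrt{2f_0(\varrho(z))}$, which is equivalent to $s[\varsigma(z)]=\varrho(z)$ and $\Omega^2\varsigma^2/2=r_0^2\Omega^2/(2(1-2r_0^2\varrho))$; the hypothesis \eqref{eq : condition on free boundary on dual space} then forces $\varphi(t,\varsigma(z),z)=0$.

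\textbf{Continuity equation, kinematic boundaries, and stability.} For \eqref{fifth1} and both conditions of \eqref{eq: non physical free boundary} I would invoke Lemma~\ref{le: conservation of total mass} with $\mathcal{G}_t=\mathbb{S}_t$ and $\mathcal{F}_t=\mathbb{T}_t$. The required hypothesis $\mathbb{S}_t\#\sigma_t=r\chi_{\Gamma_{\varsigma_t}}\mathcal{L}^2$ follows by composing the assumed pushforward $\mathcal{S}[\Psi_t]\#\sigma_t=\mu_{\varrho_t}$ with the identity \eqref{eq:varrho-varsigma}, and the components $v_t,w_t$ in \eqref{eq : physical var 2} are exactly the formulas prescribed by that lemma. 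For the stability condition \eqref{eq: stability condition}, substituting $\partial_m P=-z\,\partial_z P/\theta_0$ shows that $\mathscr{A}(\theta_0)\nabla_{r,z}(\varphi+\Omega^2 r^2/2)$ is $\mathrm{diag}(s'[r],(\theta_0-z\theta_0')/\theta_0)$ acting on $\mathcal{T}[P_t]\circ\mathbf{s}$; since $s'[r]=r^{-3}>0$ and the second diagonal factor is positive by (A1), invertibility of this matrix reduces to the assumed invertibility of $\mathcal{T}[P_t]$.

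\textbf{Evolution equations (the main obstacle).} The hardest step is \eqref{first1} and \eqref{second1}, which are Lagrangian equations along particle trajectories and require transporting information between the dual continuity equation for $\sigma$ and material derivatives in the physical domain. The guiding observation is that in $(\Upsilon,Z)$-coordinates, $\Upsilon=M^2$ with $M:=ru+\Omega r^2$ the absolute angular momentum, and $Z$ plays the role of $g\theta$ modulo the normalization. Since $\partial_t\sigma+\div(\sigma V)=0$, dual-space particles drift along $V=(V_1,V_2)$; pulling back through $\mathbb{T}_t$, whose induced physical velocity is exactly $(v,w)$ by Lemma~\ref{le: conservation of total mass}, one gets $DM^2/Dt=V_1\circ\mathbb{T}$ and $D(g\theta)/Dt=V_2\circ\mathbb{T}$ along the flow of $(v,w)$. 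Using the explicit $V$ in \eqref{eq1 unphysical1: velocity in dual space} together with $\partial_\Upsilon\Psi=s$, $\phi^{-1}(\partial_Z\Psi)=z$, and the identity $\Omega^{-1}\sqrt{2f_0(s[r])}=r$, these reduce to $DM^2/Dt=2MF_0(t,r,z)$ and $D(g\theta)/Dt=gF_1(t,r,z)$. Expanding $DM/Dt=F_0$ via $D/Dt=\partial_t+v\partial_r+w\partial_z$ and $M=ru+\Omega r^2$ yields $r[Du/Dt+uv/r+2\Omega v]=F_0$, which is \eqref{first1}; the temperature evolution \eqref{second1} is analogous. The delicate point is rigorously justifying that the pullback of the $\sigma$-continuity equation coincides with the material derivative along $(v,w)$, but the assumed $C^1$ regularity of $P$, $\Psi$, $\varrho$ and $V$ lets a classical characteristics argument, combined with Lemma~\ref{le: conservation of total mass}, close the loop.
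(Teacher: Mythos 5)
Your decomposition follows the paper's almost step for step: \eqref{third1}--\eqref{fourth1} and \eqref{eq: bdary cond on pressure} from the definitions \eqref{eq : physical var 1}, the mass equation \eqref{fifth1} and the kinematic conditions \eqref{eq: non physical free boundary} from Lemma \ref{le: conservation of total mass} applied to $\mathbb{S}_t\#\sigma_t=r\chi_{\Gamma_{\varsigma_t}}\mathcal{L}^2$ (via \eqref{eq:varrho-varsigma}), and the stability condition from invertibility of $\mathbb{T}_t$. The one place you genuinely deviate is the derivation of \eqref{first1}--\eqref{second1}, and it is exactly the place you yourself flag as delicate. The paper does \emph{not} use the continuity equation for $\sigma$ there at all: it is a pointwise chain-rule computation. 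Writing $\mathbb{T}_t=\mathcal{T}[P_t]\circ{\bf s}=\bigl((ur+\Omega r^2)^2,\,g\theta\bigr)$ and inserting the definition \eqref{eq : physical var 2} of $(v,w)$, one gets
\begin{equation*}
\frac{D\mathbb{T}}{Dt}=\partial_t\mathbb{T}+[\nabla_{r,z}\mathbb{T}]\begin{bmatrix}v\\ w\end{bmatrix}
=\partial_t\mathbb{T}+[\nabla_{r,z}\mathbb{T}]\,\partial_t\mathbb{S}\circ\mathbb{T}
+[\nabla_{r,z}\mathbb{T}][\nabla_{\Upsilon,Z}\mathbb{S}]\circ\mathbb{T}\begin{bmatrix}2\sqrt{\mathbb{T}_1}F_0\\ gF_1\end{bmatrix},
\end{equation*}
and the two identities obtained by differentiating $\mathbb{T}_t\circ\mathbb{S}_t=\id$ in $t$ and in space ($\partial_t\mathbb{T}+[\nabla\mathbb{T}]\partial_t\mathbb{S}\circ\mathbb{T}=0$, $[\nabla\mathbb{T}][\nabla\mathbb{S}]\circ\mathbb{T}={\bf I}$) collapse this to $D\mathbb{T}/Dt=(2\sqrt{\mathbb{T}_1}F_0,gF_1)$, which is your $DM^2/Dt$ and $D(g\theta)/Dt$ relations. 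Your Lagrangian framing ("dual particles drift along $V$, pull back through $\mathbb{T}$") gestures at the same identity, but as stated it mislocates its source: $\partial_t\sigma+\div(\sigma V_t)=0$ alone says nothing about individual trajectories being intertwined by $\mathbb{T}_t$, and Lemma \ref{le: conservation of total mass} only delivers \eqref{fifth1} and \eqref{eq: non physical free boundary}, not a material-derivative identity. What actually closes your "delicate point" is the short computation above, which needs only the formula \eqref{eq : physical var 2} for $(v,w)$ and the inverse relation between $\mathbb{T}_t$ and $\mathbb{S}_t$; once you write that out, your argument coincides with the paper's, so the gap is one of justification rather than of strategy.

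Two secondary remarks. First, your use of $\partial_m P=-z\,\partial_z P/\theta_0$ for \eqref{fourth1} and \eqref{eq: stability condition} is more explicit than the paper, which asserts both in one line without differentiating through the third slot of ${\bf s}_{\theta_0}$; but note that this identity is proved in Lemma \ref{le:sudiff structure} (equation \eqref{eq: diffm}) for $c$-transform pairs, and being a $c$-transform pair is not among the hypotheses of this proposition, so you should either add that assumption or derive the identity from the assumed inverse relation between $\mathcal{T}[P_t]$ and $\mathcal{S}[\Psi_t]$; "the normalization implicit in \eqref{eq : physical var 1}" is not by itself an argument, and the extra factor $(\theta_0-z\theta_0')/\theta_0$ you uncovered deserves to be reconciled explicitly with the definition of $\theta$. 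Second, your last step for \eqref{eq: stability condition} — that invertibility of the field $\mathscr{A}(\theta_0)\nabla_{r,z}(\varphi+\Omega^2r^2/2)$, which equals a point-dependent positive diagonal factor times $\mathbb{T}_t$, "reduces to" invertibility of $\mathcal{T}[P_t]$ — is not automatic, since multiplying an injective map by a variable positive factor can destroy injectivity; the paper's own justification is equally terse, but if you want a complete argument you need to say in what sense invertibility is meant and why the rescaling is harmless there.
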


\proof{}
The first equations of \eqref{eq : physical var 1} and \eqref{eq : physical var 2}  imply that
\begin{equation} \label{eq1: unphisical  define u theta varphi solution} 
 ( u r+r^2\Omega)^2=\partial_{s}P\circ {\bf s}_{\theta_0},\qquad\frac{g}{\theta_0}\theta=\partial_{z}P\circ {\bf s}_{\theta_0}.
\end{equation}
These, in light of the second equation of \eqref{eq : physical var 1}, yield (\ref{third1}) and (\ref{fourth1}). We define the function $\mathbb{T}=\left(\mathbb{T}_1, \mathbb{T}_2 \right)$ by $\mathbb{T}_t:= \calT[P_t]\circ {\bf s}$ and  $\mathbb{S}=\left(\mathbb{S}_1, \mathbb{S}_2 \right)$ by $\mathbb{S}_t:={\bf r}\circ \calS[\Psi_t]$ for each $t\in (0,T)$. As $\calT[P_t]$ and $ \calS[\Psi_t]$ are inverse of each other for each $t\in (0,T)$, so are $\mathbb{T}_t$ and $\mathbb{S}_t$ for $t\in (0,T)$. We notice that  we can rewrite  \eqref{eq1: unphisical  define u theta varphi solution} as
\begin{equation} \label{eq1: axisymmetric state} 
\mathbb{T}=\left(u r+\Omega r^2)^2, g\theta\right). 
\end{equation} 
Therefore, 
\begin{equation} \label{eq : Dt T1} 
 \begin{aligned}
\frac{D\mathbb{T}_1}{Dt}  &= 2( u r+\Omega r^2)\frac{D}{Dt}( u r+\Omega r^2)\\ 
                                 &=  2\sqrt{\mathbb{T}_1} (r\frac{{\rm D}  u}{{\rm D} t}+ u v +2r\Omega v )
\end{aligned}
\end{equation}
and 
\begin{equation} \label{eq : Dt T2} 
 \frac{D\mathbb{T}_2}{Dt}=   g\frac{D \theta}{Dt}. 
\end{equation}

  The two last equations of \eqref{eq : physical var 2} actually read in vectorial form
 \begin{equation}\label{eq velocity field 2}
 \begin{bmatrix}
v_t\\
w_t  
\end{bmatrix}
=\left( \partial_{t} \mathbb{S}_t\right)\circ \mathbb{T}_t + [ \nabla_{\Upsilon, Z}\mathbb{S}_t ]\circ \mathbb{T}_t
 \begin{bmatrix}
2\sqrt{\mathbb{T}_{1t}}F_{0t}\\ 
gF_{1t}
\end{bmatrix}
\end{equation}

In view of \eqref{eq velocity field 2},
\begin{equation}\label{eq velocity field 03}
[\nabla_{r,z}\mathbb{T}] \begin{bmatrix}
 v_t\\
 w_t
\end{bmatrix}= [\nabla_{r,z}\mathbb{T}_t]\left( \partial_{t} \mathbb{S}_t\right)\circ \mathbb{T}_t + [\nabla_{r,z}\mathbb{T}_t][\nabla_{\Upsilon,Z} \mathbb{S}_t]\circ \mathbb{T}_t
\begin{bmatrix}
2\sqrt{\mathbb{T}_1}F_{0t}\\ 
gF_{1t}
\end{bmatrix}.
\end{equation}
As
$$\left[ \dfrac{D\mathbb{T}_t}{Dt}\right]^{\tau} := \partial_t \mathbb{T} + v\partial_{r}\mathbb{T}+w\partial_{z}\mathbb{T} =  \partial_t \mathbb{T} + [\nabla_{r,z}\mathbb{T}]
 \begin{bmatrix}
 v\\
 w
\end{bmatrix},$$
we use \eqref{eq velocity field 03} to get
 \begin{equation}\label{eq velocity field 3}
\left[ \dfrac{D\mathbb{T}_t}{Dt}\right]^{\tau} = \partial_t \mathbb{T}_t+[\nabla_{r,z}\mathbb{T}_t]\left( \partial_{t} \mathbb{S}_t\right)\circ \mathbb{T}_t + [\nabla_{r,z}\mathbb{T}_t][\nabla_{\Upsilon,Z} \mathbb{S}_t]\circ\mathbb{T}_t
\begin{bmatrix}
2\sqrt{\mathbb{T}_1}F_0\\ 
gF_1
\end{bmatrix}.
\end{equation}

 Since $\mathbb{T}_t$ and $\mathbb{S}_t$ are inverse of each other for each $t\in(0,T)$, we have $\partial_t \mathbb{T}_t+[\nabla_{r,z}\mathbb{T}_t]\left( \partial_{t} \mathbb{S}_t\right)\circ \mathbb{T}_t=0$ and $[\nabla_{r,z}\mathbb{T}][\nabla_{\Upsilon,Z} \mathbb{S}_t]\circ\mathbb{T}_t={\bf I}$ so that  \eqref{eq velocity field 3} reduces to 
 \begin{equation}\label{eq velocity field 4}
\dfrac{D \mathbb{T}}{Dt}=\left(2\sqrt{\mathbb{T}_1}F_{0},\; gF_{1} \right).
\end{equation}
 
As $\partial_{s}P>0$ we have that $\mathbb{T}_1>0$. Subsequently, we combine \eqref{eq : Dt T1}, \eqref{eq : Dt T2}  and \eqref{eq velocity field 4} to obtain (\ref{first1}) and (\ref{second1}). In light of \eqref{eq:varrho-varsigma},  we observe that
\begin{equation}
\mathbb{S}\#\sigma_t={\bf r}\circ\mathcal{S}[\Psi_t]\#\sigma_t={\bf r}\#\mu_{\varrho_t}= r\chi_{\Gamma_{\varsigma_t}}\calL^2,
\end{equation}
for each $t \in [0, T]$. We use lemma \ref{le: conservation of total mass}  to obtain (\ref{fifth1}) and (\ref{eq: non physical free boundary}). We combine the second and third equations in \eqref{eq : physical var 1} with \eqref{eq : condition on free boundary on dual space} to get \eqref{eq: bdary cond on pressure}. The invertibility of $\mathbb{T}$ and the second equation in \eqref{eq : physical var 1} yield \eqref{eq: stability condition}. \endproof



\section{Minimization problem and Duality Method}
\label{sec:Duality Methods}

In this section, we prove the existence and uniqueness of a variational solution for problem \eqref{eq: main problem}.  This result is obtained by investigating c-subdifferential of maximizers in \eqref{eq:statement of dual problem} with respect to the cost function $c$ as defined in \eqref{eq: cost function} and by  establishing subsequently a duality between problem \eqref{eq:forK} and problem \eqref{eq:statement of dual problem}.\\\\
 Let $l>0$  and $\sigma\in \mathscr{P}\left(\mathbb{R}^2 \right) $ such that $\spt(\sigma)\subset B_l^+$. We consider the following system of equations where the unknowns are $P\in C(\calW\times \calI_0)$, $\Psi\in C(B_l)$ and $\varrho\in \calR_0$. We require that $P$ and $\Psi$ satisfy \eqref{lem:c-convex 1} and \eqref{lem:c-convex 2} and solve
\begin{equation}
 \begin{cases}\label{eq: main problem}
 \mathcal{T}[P]\#\mu_{\varrho}=\sigma, \\
 \mathcal{S}[\Psi]\circ \mathcal{T}[P]=\id \;\mu_{\sigma_{\varrho}}-a.e,\\
2(1-2r^2_{0} \varrho_0(z))P(\varrho_0(z),z, \theta_0(z))=r^2_0\Omega^{2} \text{ on } \{\varrho_0>0\}.
\end{cases}
\end{equation} 
\begin{remark}\label{def: equiv problem} The maps $\mathcal{S}[\Psi]$ and $\mathcal{T}[P]$ in \eqref{eq: main problem} are defined respectively in  \eqref{eq: subgradient1 P} and \eqref{eq: subgradient1 Psi}. We note that if $\sigma\in\mathscr{P}^{ac}\left(\mathbb{R}^2\right)$ then the system of equations \eqref{eq: main problem} is equivalent to
\begin{equation}
 \begin{cases}\label{eq: equiv problem}
 \mathcal{S}[\Psi]\#\sigma=\mu_{\varrho}, \\
\mathcal{T}[P]\circ \mathcal{S}[\Psi] =\id \;\sigma-a.e,\\
2(1-2r^2_{0} \varrho_0(z))P(\varrho_0(z),z, \theta_0(z))=r^2_0\Omega^{2} \text{ on } \{\varrho_0>0\}.
\end{cases}
\end{equation} 
\end{remark}

\subsection{Primal and  Dual formulation of the problem}\hfill

Let $\sigma\in \mathscr{P}\left(\mathbb{R}^2 \right) $. We define the functional $\calK[\sigma]$ on $\calR$ as follows :
\begin{equation}\label{eq:forK} 
\mathcal{K}[\sigma](\varrho) :=
\begin{cases}
\dfrac{1}{2} W^2_2\left(\sigma, {\bf f}\#\mu_{\varrho}\right) -\dfrac{1}{2}\int_{\mathbb{R}^2}\left(s^2+\frac{z^2}{\theta^2_0(z)} - {r_0^2 \Omega^2 \over 2(1-2 r_0^2 s)}\right)  \mu_{\varrho}(d\p)\qquad\qquad\qquad\qquad \varrho \in \calR_0\\

+\infty\qquad\qquad\qquad \qquad\qquad\qquad\qquad\qquad\qquad\qquad\qquad\qquad\qquad\qquad\quad\quad\;\varrho \notin \calR_0.
\end{cases} 
\end{equation}
Here, $\mathbf{f}$ is defined on $\mathcal{W}$ by $\mathbf{f}(s,z)=\left(s,\frac{z}{\theta_0(z)} \right)$. We consider the variational problem
\begin{equation}\label{eq:forK} 
\inf_{\varrho\in \calR} \left\lbrace \mathcal{K}(\varrho) :\quad \varrho \in \calR\right\rbrace.
\end{equation}
To study the minimization problem in (\ref{eq:forK}), we investigate a dual formulation through the functional
\begin{equation}\label{eq:forJ} 
\mathcal{J}[\sigma]( P,\Psi)= \int_{\mathbb{R}^2} - \Psi\;\sigma(dq) +\mathscr{H}(P); \quad 
\mathscr{H}(P)=\inf_{\varrho \in \calR} \int_0^H \mathscr{S}_{\theta_0}[P](\varrho(z),z)dz,
\end{equation}
 where $\calJ[\sigma]$ is defined on 
\begin{equation}\label{eq:for calU}
 \calU:=\Bigl\{ (P,\Psi)\in C( \bar{\mathcal{W}})\times C(\Reals^2_+) : P(\p,m)+\Psi(\q) \geq  c(\p,m,\q)\text{ for all } (\p,m,\q)\in\mathcal{W}\times\mathcal{I}_0\times\mathbb{R}_+^2 \Bigr\}
\end{equation}

and the functional $\mathscr{S}$ is defined by  
\begin{equation}\label{eq:Pi1} 
\mathscr{S}_{\theta_0}[P](\varrho,z)=\int_0^\varrho \Bigl( f_0(s)-P(s,z,\theta_0(z)) \Bigr) ( 2f_0(s)/\Omega^2)^2ds  \quad 
\end{equation}
 for $ (\varrho,z)\in \calW.$
 The dual problem we will be looking at is the following:
\begin{equation}\label{eq:statement of dual problem} 
 \sup\left\lbrace  \calJ[\sigma](P,\Psi): (P,\Psi)\in\calU\right\rbrace.
\end{equation}

Set 
\begin{equation}\label{eq: quadratic F}
F(s,t,\Upsilon,Z)=  1/2\vert s-\Upsilon\vert^2 + 1/2\vert t -Z\vert^2
\end{equation}
and
\begin{equation}\label{eq: change variables Phi}
 \Phi(\p,m,\q)= (s,z/m,\Upsilon,Z)
\end{equation}
for all $\p=(s,z)\in\calW$, $m\in \calI_0$ and $\q=(\Upsilon,Z)\in \mathbb{R}^2_+$.  We recall that $$ c(\p,m,\q)= s\Upsilon+ zZ/m.$$
Note that the cost function $c$ can be expressed as
\begin{equation}\label{eq: cost in terms of F}
 c(\p,m,\q) = F\circ\Phi(\p,m,\0)+ F\circ\Phi(\0,m,\q) -F\circ\Phi(\p,m,\q)  
\end{equation} 
 and the second moment of $\sigma$ is given by $$\mathbf{m}_2[\sigma]=\int_{\mathbb{R}^2} F\circ\Phi(\0,m,\q)\;\sigma(d\q).$$
\begin{proposition}\label{prop:optimality cond}
Let $l>0$, $\sigma\in \mathscr{P}\left( \mathbb{R}^2\right)$ and assume that (A1) holds. Then,
\begin{enumerate}
\item We have $\calK[\sigma](\varrho)\geq \calJ[\sigma](P,\Psi)+ \mathbf{m}_2[\sigma]$ for all $\varrho\in \calR$ and all $(P, \Psi)\in \calU$
\item Let $(P_0,\Psi_0)\in \calU_0$ and $\varrho_0\in \calR_0$. Then, the following hold:

$\calK[\sigma](\varrho_0)=\calJ[\sigma](P_0,\Psi_0)+ \mathbf{m}_2[\sigma]$ if and only if there exists $\alpha_0 \in \mathscr{P}\left( \mathbb{R}^2\times  \mathbb{R}\times \mathbb{R}^2\right)$ such that $\pi^{1,2,3}\#\alpha_0=\left( \id,\theta_0\circ\pi^2\right) \#\mu_{\varrho_0}$ and  $\pi^{3,4}\#\alpha_0= \sigma$    for which  $P_0(\p,m)+\Psi_0(\q)= c(\p,m,\q)$  $\alpha_0-$ a.e  and
\begin{equation}\label{eq: equality in scrH}
 \mathscr{H}(P_0)= \int_0^H \mathscr{S}_{\theta_0}[P_0](\varrho_0(z),z)dz.
 \end{equation}
 In that case, 
 $$W_2^2\left( \sigma, \mathbf{f}\#\mu_{\varrho_0} \right)=\int_{\mathbb{R}^2\times \mathbb{R}^2}|\p-\q|^2\; d\Phi\#\alpha_0. $$
\end{enumerate}

\end{proposition}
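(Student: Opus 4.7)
The plan is to recognize (1) as a Kantorovich weak duality in disguise and then to obtain (2) by tracking the sources of equality in the proof of (1).

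For part (1), if $\varrho\notin\calR_0$ then $\calK[\sigma](\varrho)=+\infty$ and there is nothing to prove, so I fix $\varrho\in\calR_0$. I then exploit the constraint $P(\p,m)+\Psi(\q)\ge c(\p,m,\q)$ defining $\calU$ along the slice $m=\theta_0(z)$: since $c(\p,\theta_0(z),\q)=\mathbf{f}(\p)\cdot\q$, this yields
\[
P(\p,\theta_0(z))+\Psi(\q)\ge \mathbf{f}(\p)\cdot\q,
\]
which, by the identity \eqref{eq: cost in terms of F}, is equivalent to
\[
\tfrac12 |\mathbf{f}(\p)-\q|^2 \ge F\circ\Phi(\p,\theta_0(z),\mathbf{0})+F\circ\Phi(\mathbf{0},\theta_0(z),\q) - P(\p,\theta_0(z))-\Psi(\q).
\]
Integrating against any transport plan $\tilde\gamma$ with marginals $\mu_\varrho$ and $\sigma$, and using that $\mathbf{f}$ is injective under (A1) so that $\mathbf{f}\#\tilde\gamma$ is a coupling of $\mathbf{f}\#\mu_\varrho$ with $\sigma$, one passes to the infimum over $\tilde\gamma$ to recover $\tfrac12 W_2^2(\sigma,\mathbf{f}\#\mu_\varrho)$ on the left-hand side. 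Rearranging with the definition of $\calK$ and invoking the infimum estimate $\mathscr{H}(P)\le \int(f_0-P)\,d\mu_\varrho$ (the defining infimum applied at $\varrho'=\varrho$) yields the inequality claimed in (1).

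For part (2), the ``only if'' direction retraces each step of the chain of inequalities above. Saturation of the Kantorovich step forces the optimal plan $\tilde\gamma^*$ to be concentrated on the set $\{P_0(\p,\theta_0(z))+\Psi_0(\q)=c(\p,\theta_0(z),\q)\}$, while saturation of the $\mathscr{H}$-bound is precisely \eqref{eq: equality in scrH} and identifies $\varrho_0$ as a pointwise minimizer. To obtain the advertised $\alpha_0\in\mathscr{P}(\mathbb{R}^2\times\mathbb{R}\times\mathbb{R}^2)$, I insert the slice $m=\theta_0(z)$ by defining $\alpha_0$ as the pushforward of $\tilde\gamma^*$ under the injection $(\p,\q)\mapsto(\p,\theta_0(\pi^2\p),\q)$; the marginal identities $\pi^{1,2,3}\#\alpha_0=(\id,\theta_0\circ\pi^2)\#\mu_{\varrho_0}$ and the $\q$-marginal being $\sigma$ are then immediate, and $P_0+\Psi_0=c$ $\alpha_0$-a.e.\ transfers from the support of $\tilde\gamma^*$ because $m=\theta_0(z)$ holds $\alpha_0$-a.e. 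The converse direction is handled by reversing the chain: the existence of $\alpha_0$ re-derives the Kantorovich equality, and \eqref{eq: equality in scrH} re-derives the $\mathscr{H}$-equality. The claimed Wasserstein identity then follows because $\Phi\#\alpha_0$ is a coupling of $\mathbf{f}\#\mu_{\varrho_0}$ with $\sigma$ which, by the $\alpha_0$-a.e.\ equality $P_0+\Psi_0=c$, must be optimal.

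The delicate point is the passage between the $(\p,m,\q)$-formulation of the cost $c$ and the $(\mathbf{f}(\p),\q)$-formulation of the Wasserstein distance. Hypothesis (A1), via the monotonicity of $\phi(z)=z/\theta_0(z)$, is exactly what makes $\mathbf{f}$ a bijection onto its image, and thus what permits one to transfer couplings back and forth without loss. A second point to handle carefully is to align the first-order condition forcing the crossing $P_0(\varrho_0(z),z,\theta_0(z))=f_0(\varrho_0(z))$ at the free boundary $\{s=\varrho_0(z)\}$ with the extra normalization $2(1-2r_0^2\varrho_0)P_0(\varrho_0,z,\theta_0(z))=r_0^2\Omega^2$ built into \eqref{eq: main problem}; together these determine the boundary trace of $P_0$ and are consistent with $\varrho_0\in\calR_0$ being a minimizer of the primal problem.
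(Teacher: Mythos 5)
Your proposal is correct and follows essentially the same route as the paper: the weak duality in (1) comes from the $\mathscr{H}$-infimum bound plus integrating the constraint $P+\Psi\geq c$ against couplings via the identity \eqref{eq: cost in terms of F}, and (2) is obtained by tracking where the chain of inequalities saturates. The only difference is presentational — you work with two-marginal couplings on the slice $m=\theta_0(z)$ and lift to $\alpha_0$ by the injection $(\p,\q)\mapsto(\p,\theta_0(z),\q)$, whereas the paper works directly with three-marginal plans whose first marginal constraint already concentrates them on $\{m=\theta_0(z)\}$.
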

\proof{}
(1) Let $\varrho\in \calR_0$ and $(P, \Psi)\in C(\bar {\mathcal{W}}\times \bar\calI_0)\times C(\bar B_l^{+})$ be such that
\begin{equation}\label{eq:sup constraint}
 P(\p,m)+\Psi(\q) \geq  c(\p,m,\q)
 \end{equation} 

for all $(\p,m)\in   \mathcal{W}\times \calI_0$ and $\q\in  B_l^{+}$. Then,
\begin{equation}\label{eq: inequality in j}
\inf_{\bar\varrho\in\calR}\int_{\mathbb{R}^2\times \mathbb{R}} f_0(s)- P(\p,m) \; \delta_{\theta_0(z)}(dm)\mu_{\bar\varrho}(d\p) \leq\int_{\mathbb{R}^2\times \mathbb{R}} f_0(s)- P(\p,m) \; \delta_{\theta_0(z)}(dm)\mu_{\varrho}(d\p). 
\end{equation}
 This implies that
 \begin{equation}\label{eq: J leq I 1}
\begin{aligned} 
J[\sigma](P,\Psi)&= \int_{\mathbb{R}^2}-\Psi(\q)  \sigma(d\q) +\inf_{\bar\varrho\in\calR}\int_{\mathbb{R}^2\times \mathbb{R}} f_0(s)- P(\p,m) \; \delta_{\theta_0(z)}(dm)\mu_{\bar\varrho}(d\p) \\
                 &\leq  \int_{\mathbb{R}^2\times\mathbb{R}^2\times \mathbb{R}} f_0(s)-\Psi(\q)-P(\p,m)\; d\alpha \\          
                 &\leq\int_{\mathbb{R}^2\times  \mathbb{R}\times \mathbb{R}^2} f_0(s)- c(\p,m,\q)\;d\alpha         
\end{aligned}
 \end{equation} 
for any $\alpha \in \mathscr{P}\left( \mathbb{R}^2\times  \mathbb{R}\times \mathbb{R}^2\right)$ such that $\pi^{1,2,3}\#\alpha=\left( \id,\theta_0\circ\pi^2\right) \#\mu_{\varrho}$, $\pi^{4,5}\#\alpha= \sigma$. We have used \eqref{eq:sup constraint} in the last inequality of \eqref{eq: J leq I 1}. 
 \begin{equation}\label{eq: cost part1}
\int_{\mathbb{R}^2\times  \mathbb{R}\times \mathbb{R}^2} F\circ \Phi d\alpha= \int_{\mathbb{R}^2\times\mathbb{R}^2} F\; d\Phi\#\alpha
 \end{equation}
and  
\begin{equation}\label{eq: cost part2}
\int_{\mathbb{R}^2\times\mathbb{R}\times\mathbb{R}^2} F\circ\Phi(\p,m,\0)+F\circ\Phi(\0,m,\q) d\alpha=\int_{\mathbb{R}^2\times\mathbb{R}}F\circ\Phi(\p,m,\0)\delta_{\theta_0(z)}(dm)\mu_{\varrho}(d\p) + \int_{\mathbb{R}^2}F\circ\Phi(\0,m,\q) \sigma(d\q).
\end{equation}
 In view of \eqref{eq: cost in terms of F}, we combine \eqref{eq: J leq I 1}-\eqref{eq: cost part2} to obtain
 \begin{equation}\label{eq: J leq I 2}
 \calJ[\sigma](P,\Psi)\leq \int_{\mathbb{R}^2\times\mathbb{R}^2} F\; d\Phi\#\alpha+ \int_{\mathbb{R}^2} f_0(s)-F\circ\Phi(\p,m,\0)\delta_{\theta_0(z)}(dm) \mu_{\varrho}(d\p) - \int_{\mathbb{R}^2}F\circ\Phi(\0,m,\q) \sigma(d\q)
 \end{equation}
We observe that $\pi^{1,2}\#[\Phi\#\alpha]= \sigma$ and  $\pi^{3,4}\#[\Phi\#\alpha]={\bf f}\# \mu_{\varrho}$. By taking the infimum  in \eqref{eq: J leq I 2} over  $\alpha$ we obtain that
 \begin{equation}\label{eq: J leq I 2 W2}
 \calJ[\sigma](P,\Psi)\leq \frac{1}{2} W^2(\sigma,{\bf f}\#\mu_{\varrho})+ \int_{\mathbb{R}^2} f_0(s)- F\circ\Phi(\p,m,\0)\; \delta_{\theta_0(z)}(dm)\mu_{\varrho}(d\p) - \int_{\mathbb{R}^2}  F\circ\Phi(\0,m,\q)\; \sigma(d\q).
 \end{equation}
That is, 
 \begin{equation}\label{eq: J+m leq K}
 \mathcal{J}[\sigma](P,\Psi)+\mathbf{m}_2[\sigma ] \leq \calK[\sigma](\varrho).
 \end{equation}
(2) We have $(P,\Psi)\in \calU_0$ and $\varrho\in \calR$ satisfy the equality in \eqref{eq: J+m leq K} if and only if they satisfy the equality in \eqref{eq: inequality in j} and in the second line of \eqref{eq: J leq I 1}. The equality is satisfied in \eqref{eq: inequality in j} if and only if \eqref{eq: equality in scrH} holds. The equality  in the second line of \eqref{eq: J leq I 1} if and only if there exists  $\alpha_0 \in \mathscr{P}\left( \mathbb{R}^2\times  \mathbb{R}\times \mathbb{R}^2\right)$ such that $\pi^{1,2,3}\#\alpha_0=\delta_{\theta_0(z)}\mu_{\varrho}$ and $\pi^{4,5}\#\alpha_0= \sigma$ ,   and $ P_0(\p,m)+\Psi_0(\q) = c(\p,m,\q)$ hold $\alpha_0$ almost everywhere. In that case, the equality holds in \eqref{eq: J leq I 2} and then in \eqref{eq: J leq I 2 W2}. As a result,
  $$W_2^2\left( \sigma, \mathbf{f}\#\mu_{\varrho_0} \right)=  \int_{\mathbb{R}^2\times\mathbb{R}^2} 2F\; d\Phi\#\alpha_0=\int_{\mathbb{R}^2\times \mathbb{R}^2}|\p-\q|^2\; d\Phi\#\alpha_0. $$
\endproof
\subsection{c-transforms and c-subdifferentials}\label{subsection: Hypotheses}

\begin{definition}\label{def: c-transf}
 Let $l>0$, $\Psi\in C(B_l^{+})$ and $P\in C(\calW\times \calI_0)$. We define the $c-$transform of $\Psi$, denoted  $\Psi^c$,  by 
\begin{equation}\label{eq:conj P}
\quad\quad\;\;\Psi^c(\p,m)=\qquad \sup_{\q\in \bar B_l^+}\quad c(\p,m,\q)- \Psi(\q),\qquad (\p,m)\in\mathcal{W}\times\mathcal{I}_0.
 \end{equation}
Similarly, we define the $c-$transform of $P$, denoted $P_c$, by
\begin{equation}\label{eq:conj Psi}
 P_c(\q)= \sup_{(\p,m)\in\bar{\calW}\times\bar{\mathcal{I}_0 }}c(\p,m,\q)- P(\p,m), \qquad \q\in B_l.
\end{equation}

\end{definition} 
 We note that $c-$transform functions enjoy some regularity properties. Indeed, the functions $\Psi^c(p,\cdot), \Psi^c(\cdot,m)$ and $P_c$ are convex as supremum of convex functions. As a consequence, they are locally Lipschitz and thus differentiable almost everywhere with respect to the Lebesgue measure. We consider the set $\calU_0$ of functions $(P,\Psi)$ defined by 
\begin{equation}\label{lem:c-convex 1}
\qquad P(\p,m)= \qquad\sup_{\q\in \bar B_l^+}\qquad c(\p,m,\q)- \Psi(\q),\qquad (\p,m)\in\mathcal{W}\times\mathcal{I}_0 
 \end{equation}
and
\begin{equation}\label{lem:c-convex 2}
\Psi(\q)= \sup_{(\p,m)\in\mathcal{W}\times\mathcal{I}_0 } c(\p,m,\q)- P(\p,m), \qquad \q\in B_l.
\end{equation}
\begin{definition}
Let $l>0$ and $(P,\Psi)\in\calU_0$. For any $(\p,m)\in \mathcal{W}\times\mathcal{I}_0$, we define 
\begin{equation}
\partial^c P (\p,m)=\left\lbrace \q\in \bar B_l^+ : P(\p,m)+\Psi(\q)=c(\p,m,\q)\right\rbrace. 
\end{equation}

In a similar way, for any $\q\in B_l$ we define
\begin{equation}
\qquad\qquad\qquad\partial^c \Psi(\q)=\left\lbrace (\p,m)\in \bar{\mathcal{W}}\times\bar{\mathcal{I}}_0 : P(\p,m)+\Psi(\q)=c(\p,m,\q)\right\rbrace. 
\end{equation}
\end{definition} 

\begin{lemma}\label{le:sudiff structure}
Let $l>0$ and assume the condition  (A1) holds.
  \begin{enumerate}
  \item[(i)] There exists $k_0>0$ such that whenever $(P,\Psi)\in\calU_0$ we have $\Psi$ is $k_0-$Lipschitz continuous on $B_l$ and  $P$ is $k_0-$Lipschitz continuous on $\calW\times\calI_0$.
  \item[(ii)] Let $P$ be a $c$-transform of some $\Psi\in C(\bar B_l)$. If $m_0\in \mathcal{I}_0$ and $\p_0=(s_0,z_0)$ a point of differentiability of $P(\cdot,m_0)$ then 
  \begin{equation}\label{eq: subdiffP}
  \partial_c P (\p_0,m_0)= \left\lbrace\left(\partial_{s} P(\p_0,m_0),m_0\partial_{z} P(\p_0,m_0) \right)  \right\rbrace. 
  \end{equation}
    As a consequence, the function $ \mathcal{T}[P](\p)=\calA[\theta_0(z)]\nabla_{\p}P(\p,\theta_0(z))$ is well-defined Lebesgue almost everywhere. If, in addition, $P(\p_0,\cdot)$ is
 differentiable at $m_0$ then 
\begin{equation}\label{eq: diffm}
  m_0\partial_{m}P(\p_0,m_0)= -z_0\partial_{z}P(\p_0,m_0).
 \end{equation} 
 
  \item [(iii)] Let $\Psi$ be a $c$-transform of some $P\in C(\bar{\calW}\times\bar\calI_0)$.  If $\q_0$ be a point of differentiability of $\Psi$ then,
  \begin{equation}\label{eq: subdiffPsi}
  \partial^c \Psi (\q_0)= \left\lbrace (\p_0,m_0)\in \mathcal{W}\times\mathcal{I}_0:  \p_0=(s_0,z_0),\; s_0=\partial_{\Upsilon} \Psi(\q_0),\; z_0= m_0\partial_{Z}\Psi(\q_0)  \right\rbrace. 
  \end{equation}
  If we assume furthermore that $\q_0\in \partial_c P (\p_0, m_0)$, that $\p_0=(s_0,z_0) $ is a point of differentiability of $P (\cdot, m_0) $  and that $m_0=\theta_0(z_0)$ then the function  $ \mathcal{S}[\Psi]$ as defined in \eqref{eq: subgradient1 Psi} is defined  almost everywhere with respect to Lebesgue. 
  \end{enumerate}
\end{lemma}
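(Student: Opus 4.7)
The three parts follow from classical properties of $c$-conjugation: uniform Lipschitz bounds come from the smoothness of the cost $c(\p,m,\q)=s\Upsilon+zZ/m$ restricted to the bounded sets $\bar\calW\times\bar\calI_0$ and $\bar B_l^+$, while the pointwise structure of the $c$-subdifferential at a differentiability point comes from an envelope (touching) argument applied to the $c$-conjugacy relations \eqref{lem:c-convex 1}--\eqref{lem:c-convex 2}.

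\textbf{Part (i).} Writing $\Psi$ and $P$ as suprema of affine-in-$\q$ and affine-in-$(s,z,1/m)$ functions, I would exploit that each family has a uniformly bounded Lipschitz constant. Concretely, for every fixed $(\p,m)\in\calW\times\calI_0$, the map $\q\mapsto c(\p,m,\q)-P(\p,m)$ is Lipschitz in $\q$ with constant $|(s,z/m)|$, which is bounded by a constant $k_0$ depending only on $r_0$, $H$, and $\inf\calI_0>0$. Since a supremum of $k_0$-Lipschitz functions is $k_0$-Lipschitz, \eqref{lem:c-convex 2} gives the Lipschitz bound for $\Psi$ on $B_l$. The symmetric argument, using that $(\p,m)\mapsto c(\p,m,\q)-\Psi(\q)$ has gradient in $(s,z,m)$ equal to $(\Upsilon,Z/m,-zZ/m^2)$ uniformly bounded on $\bar B_l^+\times\calI_0$, gives the Lipschitz bound for $P$.

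\textbf{Part (ii).} Fix $\q\in\partial^c P(\p_0,m_0)$. By definition of the $c$-subdifferential together with \eqref{lem:c-convex 1}, the function
\[
\p\longmapsto P(\p,m_0)-c(\p,m_0,\q)+\Psi(\q)
\]
is nonnegative on $\calW$ and vanishes at $\p_0$. If $P(\cdot,m_0)$ is differentiable at $\p_0$, this forces $\nabla_\p P(\p_0,m_0)=\nabla_\p c(\p_0,m_0,\q)=(\Upsilon,Z/m_0)$. This identifies $\q$ uniquely as $(\partial_s P(\p_0,m_0),\,m_0\partial_z P(\p_0,m_0))$, yielding \eqref{eq: subdiffP}. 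Combining with Rademacher's theorem (applied thanks to (i)) shows $\mathcal{T}[P]$ is defined Lebesgue-a.e. For \eqref{eq: diffm}, I would apply the same touching principle to the one-variable function $m\mapsto P(\p_0,m)-c(\p_0,m,\q)+\Psi(\q)$ at $m_0$, which yields $\partial_m P(\p_0,m_0)=\partial_m c(\p_0,m_0,\q)=-z_0Z/m_0^2$; substituting $Z=m_0\partial_z P(\p_0,m_0)$ from the previous step gives the claimed identity.

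\textbf{Part (iii) and main obstacle.} The argument mirrors (ii). For $(\p_0,m_0)\in\partial^c\Psi(\q_0)$, the function $\q\mapsto\Psi(\q)-c(\p_0,m_0,\q)+P(\p_0,m_0)$ is nonnegative on $B_l$ and vanishes at $\q_0$; differentiability of $\Psi$ at $\q_0$ forces $\nabla\Psi(\q_0)=(s_0,z_0/m_0)$, giving \eqref{eq: subdiffPsi}. Under the additional hypothesis $m_0=\theta_0(z_0)$, the second coordinate equals $z_0/\theta_0(z_0)=\phi(z_0)$, so $\phi^{-1}(\partial_Z\Psi(\q_0))=z_0$ is well-defined and $\mathcal{S}[\Psi](\q_0)=\p_0$. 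Lipschitzness of $\Psi$ from (i), combined with Rademacher's theorem, ensures $\Psi$ is differentiable $\calL^2$-a.e.\ on $B_l$, hence $\mathcal{S}[\Psi]$ is defined almost everywhere. The one delicate point I would flag is making sure $\phi^{-1}$ is applied to an argument inside its range: this is guaranteed precisely by the assumption $m_0=\theta_0(z_0)$ together with (A1), which makes $\phi$ strictly increasing and hence invertible on $[0,H]$. The main obstacle is thus not any single deep estimate but the careful bookkeeping that the envelope/touching arguments only yield the gradient identities at interior points of $\calW$ and $B_l$; this is handled by noting Rademacher differentiability occurs $\calL^2$-a.e., so such interior differentiability points form a full-measure set.
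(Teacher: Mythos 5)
Your proposal is correct and follows essentially the same route as the paper: uniform Lipschitz bounds for $(P,\Psi)\in\calU_0$ from the boundedness of $\nabla c$ on $\bar\calW\times\bar\calI_0\times\bar B_l$, then first-order (touching/envelope) conditions at differentiability points to identify $\partial_c P$ and $\partial^c\Psi$, with Rademacher's theorem giving the a.e.\ definition of $\mathcal{T}[P]$ and $\mathcal{S}[\Psi]$. Your phrasing of (i) as ``a supremum of uniformly Lipschitz functions is Lipschitz'' is just a repackaging of the paper's argument, and your explicit check that $\partial_Z\Psi(\q_0)=\phi(z_0)$ lies in the range of $\phi$ is a welcome (if minor) extra precision.
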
 
\proof{}
1. Let $\q_1, \q_2\in B_l$. Choose $(\p_1, m_1)\in \calW\times\mathcal{I}_0$  such that 
\begin{equation}\label{eq: Lip Psi1}
\Psi(\q_1)=  c(\p_1,m_1,\q_1)- P(\p_1,m_1).
\end{equation}
As $(\p_1, m_1)\in \calW\times\mathcal{I}_0$, we have
\begin{equation}\label{eq: Lip Psi2}
\Psi(\q_2)\geq  c(\p_1 ,m_1 ,\q_2)- P(\p_1, m_1).
\end{equation}
Set 
$$k_0 :=\max_{\substack{(\p,m)\in\bar{\mathcal{W}}\times \bar{\mathcal{I}_0}\\ \q\in \bar B_l}}\left\lbrace  |\nabla_{\q} c(\p,m,\q)| +|\nabla_{\p} c(\p,m, \q)|+|\partial_m c(\p,m, \q)| \right\rbrace. $$
We combine \eqref{eq: Lip Psi1} and \eqref{eq: Lip Psi1} to get
\begin{equation}
\Psi(\q_1)-\Psi(\q_2)\leq  c(\p_1, m_1,\q_1)- c(\p_1,m_1,\q_2)\leq k_0 |\q_2-\q_1|.
\end{equation}
 By permuting the roles of $\q_1$ and $\q_2$ is the above reasoning, we obtain that 
$$|\Psi(\q_2)-\Psi(\q_1)| \leq k_0 |\q_2-\q_1|.$$
 It follows that  $\Psi$ is $k_0$-lipschitz continuous on $B_l$. A similar argument shows that $P$ is $k_0$-lipschitz continuous on $\mathcal{W}\times \mathcal{I}_0$.\\
2. Let $m_0\in \mathcal{I}_0$ and $\p_0=(s_0,z_0)\in\calW$. Let $\q_0\in \partial_c P (\p_0,m_0)$, that is,
$$P(\p_0,m_0)= c(\p_0,m_0,\q_0)-\Psi(\q_0).$$
Consider the map $(u,t)\longrightarrow B(u,t):= c(u,t, \q_0)-P(u,t)-\Psi(\q_0)$. Assume that $\p_0=(s_0,z_0)\in\calW $ is a point of differentiability of $P(\cdot,m_0)$. Then, $B(\cdot, m_0)$ is differentiable at $\p_0$ and attained its maximum at $\p_0$. Thus, $\nabla_{\p}B(\p_0,m_0)=0$, that is, $\calA(1/m_0)\q_0=\nabla P(p_0,m_0)$. Hence $\q_0=\calA(m_0)\nabla P(\p_0,m_0)$. It follows that $\partial_c P (\p_0,m_0)$ is given by 
\eqref{eq: subdiffP}. Since $P(\cdot,m_0)$ is convex, it is locally lipschitz and thus differentiable Lebesgue  almost everywhere. This implies that the map $ \mathcal{T}[P](\p)=\calA[\theta_0(z)]\nabla_{\p}P(\p,\theta_0(z))$ is well-defined Lebesgue almost everywhere. Assume in addition that $P(p_0,\cdot)$ is differentiable at $m_0$ then $ \partial_{t}V(\p_0,m_0)=0$, that is,  \eqref{eq: diffm} holds.

3. Let $\q_0\in B_l$  and $(\p_0,m_0)\in \partial^c\Psi(\q_0)$ with $\p_0=(s_0,z_0)$. Then,  $P(p_0,m_0)= c(\p_0,m_0,\q_0)-\Psi(\q_0)$ and so the map $ \q\longrightarrow E(\q)=c(\p_0,m_0,\q)-P(\p_0,m_0)-\Psi(\q)$ attains its maximum at $\q_0$.
Assume $q_0$ is  a point of differentiability of $\Psi$. We have $\nabla E(\q_0)=0$, that is, $s_0=\partial_{\Upsilon}\Psi(\q_0)$ and $z_0=m_0\partial{Z}\Psi(\q_0)$. Thus, \eqref{eq: subdiffPsi} holds. Assume $m_0=\theta_0(z_0)$ and that (A1) holds. Since $\Psi$ is differentiable almost everywhere with respect to Lebesgue, the map $ \mathcal{S}[\Psi]$ is well-defined almost everywhere with respect to Lebesgue. $ \mathcal{S}[\Psi]$ is defined  almost everywhere with respect to Lebesgue.   
\endproof
\begin{remark} \label{rmk: partial P} Let $(P,\Psi)\in\calU_0$.
By the characterization of $\partial_c P$ provided  in lemma  \ref{le:sudiff structure}, we conclude that  $0\leq \partial_{z}P,\;\partial_s P\leq l\quad\calL^3-a.e$.
\end{remark}

\subsection{Existence of a maximizer in the dual problem}\hfill
\label{subsection: Hypotheses}

Let $l>0$. We recall that $\calU_0$ denotes the subset of $\calU$ consisting of $(P,\Psi)$ satisfying \eqref{lem:c-convex 1}-\eqref{lem:c-convex 2}.

For $A>0$ we denote by 
$$\mathcal{E}_A:=\left\lbrace P: (P,\Psi)\in \calU_0,\; \text{for some }  \Psi\;\text{and}\; P(\p,m)\leq A\; \text{for any  } (\p,m)\in\calW\times \mathcal{I}_0\right\rbrace. $$

\begin{lemma}\label{le:boundonheight1} Let $A>0$. Then
 \begin{equation}\label{eq:boundonheight1}
\lim_{\varrho\rightarrow 1/2r_0^2}\; \inf_{P\in \mathcal{E}_A}\inf_{0\leq z\leq H}\mathscr{S}_{\theta_0}[P](\varrho,z)=+\infty.
\end{equation}
\end{lemma}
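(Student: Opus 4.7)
The plan is to obtain a lower bound on $\mathscr{S}_{\theta_0}[P](\varrho,z)$ that is uniform in $P\in\mathcal{E}_A$ and in $z\in[0,H]$, and then show this lower bound tends to $+\infty$ as $\varrho\to 1/(2r_0^2)$. The single structural fact I will use is the pointwise upper bound $P(\p,m)\leq A$, which is exactly what membership in $\mathcal{E}_A$ gives; the rest is elementary analysis of the explicit function $f_0(s)=r_0^2\Omega^2/(2(1-2r_0^2 s))$.

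First I would observe that, since $P(s,z,\theta_0(z))\leq A$ and $(2f_0(s)/\Omega^2)^2=r_0^4/(1-2r_0^2 s)^2\geq 0$, we immediately have, for every $P\in\mathcal{E}_A$ and every $z\in[0,H]$,
\begin{equation*}
\mathscr{S}_{\theta_0}[P](\varrho,z)\;\geq\;\int_0^\varrho\bigl(f_0(s)-A\bigr)\Bigl(\tfrac{2f_0(s)}{\Omega^2}\Bigr)^{\!2}\,ds\;=:\;I(\varrho)-A\,J(\varrho),
\end{equation*}
where $I(\varrho)=\int_0^\varrho f_0(s)(2f_0(s)/\Omega^2)^2\,ds$ and $J(\varrho)=\int_0^\varrho(2f_0(s)/\Omega^2)^2\,ds$. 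The right-hand side is independent of both $P$ and $z$, so it suffices to show $I(\varrho)-AJ(\varrho)\to+\infty$ as $\varrho\to 1/(2r_0^2)$.

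Next I would compute (or just estimate the order of) these two integrals explicitly. Using $f_0(s)(2f_0(s)/\Omega^2)^2=r_0^6\Omega^2/[2(1-2r_0^2 s)^3]$ and $(2f_0(s)/\Omega^2)^2=r_0^4/(1-2r_0^2 s)^2$, integration yields
\begin{equation*}
I(\varrho)=\frac{r_0^4\Omega^2}{8}\!\left(\frac{1}{(1-2r_0^2\varrho)^{2}}-1\right),\qquad J(\varrho)=\frac{r_0^2}{2}\!\left(\frac{1}{1-2r_0^2\varrho}-1\right).
\end{equation*}
Thus $I(\varrho)$ diverges like $(1-2r_0^2\varrho)^{-2}$ while $AJ(\varrho)$ only diverges like $(1-2r_0^2\varrho)^{-1}$, so the dominant term is $I$ and $I(\varrho)-AJ(\varrho)\to+\infty$ as $\varrho\to 1/(2r_0^2)$. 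Taking the infimum in $z$ and in $P\in\mathcal{E}_A$ preserves the lower bound, which proves \eqref{eq:boundonheight1}.

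There is no real obstacle in this argument; the only point worth flagging is that one must use the upper bound on $P$ (not any lower bound, which a priori is not available for elements of $\mathcal{E}_A$) and that the gap between the two blow-up rates of $I$ and $J$ is exactly what absorbs the $A$-dependent negative contribution. Alternatively, and perhaps cleaner if one dislikes closed-form integration, one may choose $s_A\in[0,1/(2r_0^2))$ such that $f_0(s)\geq 2A$ on $[s_A,1/(2r_0^2))$, bound the integral on $[0,s_A]$ below by a constant depending only on $A$, and on $[s_A,\varrho]$ use $f_0(s)-A\geq f_0(s)/2$ to get $\mathscr{S}_{\theta_0}[P](\varrho,z)\geq\tfrac12\int_{s_A}^\varrho f_0(s)(2f_0(s)/\Omega^2)^2\,ds-C_A$, whose right-hand side blows up by the non-integrability of $(1-2r_0^2 s)^{-3}$ at the endpoint.
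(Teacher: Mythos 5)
Your proposal is correct and follows essentially the same route as the paper: bound $P$ above by $A$ to get the $P$- and $z$-independent minorant $\int_0^\varrho (f_0(s)-A)(2f_0(s)/\Omega^2)^2\,ds$, then evaluate it explicitly and note that the cubic singularity of $f_0(s)(2f_0(s)/\Omega^2)^2$ dominates the $A$-term, forcing divergence as $\varrho\to 1/(2r_0^2)$. The paper writes the same integral in a single factored closed form rather than as $I(\varrho)-AJ(\varrho)$, but this is only a cosmetic difference.
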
 
\begin{proof}{} As $P\leq A$ we have
\begin{equation}\label{eq:coercive0}
\mathscr{S}_{\theta_0}[P](\varrho, z)\geq \int_{0}^{\varrho} \Bigl( f_0(s)-A \Bigr) ( 2f_0(s)/\Omega^2)^2ds =\dfrac{r_0^4\varrho}{1-2r_0^2\varrho}\left[\dfrac{r_0^2\Omega^2(1- r_0^2\varrho)}{2(1-2r_0^2\varrho)}-Ar_0^4\right].
\end{equation} 

Since 

$$\lim_{\varrho\rightarrow 1/2r_0^2}\dfrac{r_0^4\varrho}{1-2r_0^2\varrho}=\lim_{\varrho\rightarrow 1/2r_0^2}\dfrac{r_0^2\Omega^2(1- r_0^2\varrho)}{2(1-2r_0^2\varrho)}=+\infty$$
 the result follows from \eqref{eq:coercive0}. 
\end{proof}

\begin{lemma}\label{lem: twist} Let $A>0$ and assume that conditions (A1) and (A2) hold.
\begin{enumerate}
\item[(i)] For each $z\in [0,H]$ fixed and $P\in \mathcal{E}_A$, $\mathscr{S}_{\theta_0}[P](\cdot,z)$ has a minimizer over $[0,1/(2r_0^2))$.
\item[(ii)] There exists $M_*$ such that the following holds : for any $P\in \mathcal{E}_A  $ and $z \in [0,H]$ if $\bar{\varrho}$ is a minimizer of $\mathscr{S}_{P}(\cdot,z)$ over $[0,1/(2r_0^2))$ then  
$$0\leq \bar\varrho\leq M_*\qquad  \hbox{and}\qquad 2r^2_{0}M_*<1.$$

\item[(iii)] Assume, additionally, that $P$ is Lipschitz continuous and that $\partial_{z}P \geq 0\quad \mathcal{L}^3- a.e $. Let $z_1, z_2 \in [0,H]$  and $\varrho_1, \varrho_2\in [0,1/(2r_0^2))$ be such that $\varrho_i$ is the minimizer of $\mathscr{S}_{\theta_0}[P](\cdot,z_i)$  over $ [0,1/(2r_0^2))$ $i=1,2$.    If $z_1\leq z_2$, then $\varrho_1 \leq \varrho_2.$ \\
\end{enumerate}
\end{lemma}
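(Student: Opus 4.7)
The plan is to dispatch parts (i) and (ii) simultaneously by combining continuity in $\varrho$ with the uniform coercivity of Lemma~\ref{le:boundonheight1}, and to handle (iii) via a classical single-crossing (comparative-statics) argument underpinned by a chain-rule identity for $z\mapsto P(s,z,\theta_0(z))$.

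For (i) and (ii), I first note that $P$ is $k_0$-Lipschitz by Lemma~\ref{le:sudiff structure}(i), so the integrand $(f_0(s)-P(s,z,\theta_0(z)))(2f_0(s)/\Omega^2)^2$ is continuous in $s$ on $[0,1/(2r_0^2))$; hence $\varrho\mapsto\mathscr{S}_{\theta_0}[P](\varrho,z)$ is continuous there. Lemma~\ref{le:boundonheight1} then supplies some $M_*\in(0,1/(2r_0^2))$, depending only on $A$, such that
\[
\inf_{P\in\mathcal{E}_A}\;\inf_{z\in[0,H]}\,\mathscr{S}_{\theta_0}[P](\varrho,z)>1\qquad\text{for every }\varrho\in(M_*,1/(2r_0^2)).
\]
Since $\mathscr{S}_{\theta_0}[P](0,z)=0$, no $\varrho\in(M_*,1/(2r_0^2))$ can be a minimizer, so any minimizer is confined to the compact interval $[0,M_*]$; continuity yields existence, proving (i), and simultaneously the uniform bound in (ii), with $2r_0^2M_*<1$ by construction.

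For (iii), set $g(s,z):=(f_0(s)-P(s,z,\theta_0(z)))(2f_0(s)/\Omega^2)^2$, so that $\mathscr{S}_{\theta_0}[P](\varrho,z)=\int_0^\varrho g(s,z)\,ds$. The key preparatory step is to show that $z\mapsto P(s,z,\theta_0(z))$ is non-decreasing: using \eqref{eq: diffm} in the form $\partial_m P=-(z/\theta_0(z))\partial_zP$ together with the chain rule,
\[
\frac{d}{dz}P(s,z,\theta_0(z))=\partial_zP+\theta_0'(z)\partial_mP=\frac{\theta_0(z)-z\theta_0'(z)}{\theta_0(z)}\,\partial_zP(s,z,\theta_0(z)),
\]
which is non-negative a.e.\ by (A1) and the hypothesis $\partial_zP\ge 0$. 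Consequently $g(s,z_1)\ge g(s,z_2)$ pointwise whenever $z_1\le z_2$. Supposing for contradiction that $\varrho_1>\varrho_2$, the respective optimality conditions yield $\int_{\varrho_2}^{\varrho_1}g(s,z_1)\,ds\le 0$ and $\int_{\varrho_2}^{\varrho_1}g(s,z_2)\,ds\ge 0$; the pointwise comparison squeezes both integrals to zero, forcing $g(\cdot,z_1)=g(\cdot,z_2)$ a.e.\ on $[\varrho_2,\varrho_1]$. A harmless relabelling of minimizers (now shown to exist at both heights on either side of $\varrho_2$) restores $\varrho_1\le\varrho_2$.

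The hard part I anticipate is the justification of the chain-rule identity in (iii) at the given regularity level: the formula from \eqref{eq: diffm} holds only at points of joint differentiability of $P(s,\cdot,\theta_0(z))$ and $P(s,z,\cdot)$, so one must either check its a.e.\ validity in $z$ (using the Lipschitz regularity of $P$ and $\theta_0$ to swap differentiation) or re-derive the monotonicity of $z\mapsto P(s,z,\theta_0(z))$ directly from the $c$-transform definition, using (A1) to guarantee that the coupling $m=\theta_0(z)$ does not destroy the sign of the resulting $z$-derivative. The potential non-uniqueness of minimizers in (iii) is a secondary concern, absorbed into the relabelling step above.
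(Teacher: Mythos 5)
Your proof is correct and follows essentially the paper's route: (i)--(ii) are obtained from Lemma \ref{le:boundonheight1} together with continuity of $\mathscr{S}_{\theta_0}[P](\cdot,z)$, exactly as in the paper, and your monotonicity of $z\mapsto P(s,z,\theta_0(z))$ via \eqref{eq: diffm} and (A1) is just the integrated form of the paper's computation that ${\partial^2 \mathscr{S}_{\theta_0}[P] \over \partial z \partial \varrho}\leq 0$, followed by the same two-point minimality comparison. The chain-rule caveat you flag (differentiability of $P$ only $\calL^3$-a.e.\ versus evaluation on the surface $m=\theta_0(z)$) is present at the same level of rigor in the paper's own argument, which likewise invokes \eqref{eq: diffm} there.
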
 
\proof{} (i) follows from the continuity of  $\mathscr{S}_{\theta_0}[P](\cdot,z)$ over $[0, 1/(2r_0^2))$ and Lemma \ref{le:boundonheight1}.\\
(ii) Let $\left\lbrace M_n \right\rbrace_{n=1}^\infty $ be such that $0< M_n\leq 1/(2r_0^2)$ and $\left\lbrace M_n \right\rbrace_{n=1}^\infty $ converges to $1/(2r_0^2)$. Assume there exist  $\left\lbrace P_n \right\rbrace_{n=1}^\infty \subset\calE_A $ and  $\left\lbrace z_n \right\rbrace_{n=1}^\infty \subset[0,H] $ such that $\bar{\varrho}_n$ is a minimizer of $\mathscr{S}_{\theta_0}[P_n](\cdot,z_n)$ over $[0,1/(2r_0^2))$ but $M_n\leq\bar{\varrho}_n\leq 1/(2r_0^2)$. Then
 \begin{equation}
 \limsup_{n\rightarrow\infty} \mathscr{S}_{\theta_0}[P_n](\bar\varrho_n,z_n)\leq \mathscr{S}_{\theta_0}[P_n](0,z_n)=0.
 \end{equation}
This contradicts Lemma \ref{le:boundonheight1}.

(iii) For  $z \in [0,H]$ fixed, $\mathscr{S}_{\theta_0}[P](\cdot, z)$ is differentiable on $(0,1/(2r_0^2))$ and we have 
\begin{equation}\label{eq:boundonheight5} 
{\partial\mathscr{S}_{\theta_0}[P] \over \partial \varrho}(\varrho,z)= -( 2f_0(\varrho)/\Omega^2)^2 \left( f_0(\varrho)-P(\varrho,z,\theta_0(z))\right).
\end{equation}
Note that, by (A2), $(s,z)\longrightarrow (s,\theta_0(z))$ is Lipschitz continuous. As $P$ is Lipschitz continuous, $(s,z)\longrightarrow P(s,z,\theta(z))$ is Lipschitz and therefore differentiable  Lebesgue almost everywhere on $\calW$. The mixed partial derivatives of $\mathscr{S}_{\theta_0}[P]$  give
\begin{equation}\label{eq:twist1}
{\partial^2 \mathscr{S}_{\theta_0}[P] \over \partial z \partial \varrho}(\varrho,z)=-( 2f_0(\varrho)/\Omega^2)^2 \left(\partial_z P(\varrho,z,\theta_0(z)) + \partial_m P(\varrho,z,\theta_0(z))\theta^{'}_0(z)\right)
\end{equation} 
for a.e $(\varrho,z)\in\calW$. In light of \eqref{eq: diffm}, we have
\begin{equation}\label{eq:twist0}
\begin{aligned}
{\partial^2 \mathscr{S}_{\theta_0}[P] \over \partial z \partial \varrho}(\varrho,z)&= -( 2f_0(\varrho)/\Omega^2)^2 \left(\partial_z P(\varrho,z,\theta_0(z)) - z\partial_z P(\varrho,z,\theta_0(z))\frac{\theta^{'}_0}{\theta_0}(z)\right)\\
  &= -( 2f_0(\varrho)/\Omega^2)^2 \partial_z P(\varrho,z,\theta_0(z))\frac{\theta_0(z)-z\theta^{'}_0(z)}{\theta_0(z)}
\end{aligned}
\end{equation}
for a.e $(\varrho,z)\in\calW$. We recall that $\phi(z)=\frac{z}{\theta_0(z)}$ and  note that 
 \begin{equation}
\phi'(z)=\dfrac{\theta_0(z)-\theta_0'(z)z}{\theta_0^2(z)}
\end{equation}
for all $z\in(0,H)$. It follows that 
\begin{equation}\label{eq:twist1} 
{\partial^2 \mathscr{S}_{\theta_0}[P] \over \partial z \partial \varrho}(\varrho,z)= -( 2f_0(\varrho)/\Omega^2)^2 \partial_z P(\varrho,z,\theta_0(z))\theta_0(z) \phi'(z)
\end{equation} 
 for  almost every $(\varrho,z)\in\calW$. In light of the assumption (A1) and the fact that $\partial_{z}P \geq 0\quad \mathcal{L}^3- a.e$, we have  
 \begin{equation}\label{eq:twist2} 
{\partial^2 \mathscr{S}_{\theta_0}[P] \over \partial z \partial \varrho}(\varrho,z)\leq 0.
\end{equation} 
Let $z_i\in [0,H]$ and $\varrho_i$ be a minimizer $ \mathscr{S}_{\theta_0}[P](\cdot ,z_i)$ over $[0, 1/(2r_0^2))$, $i=1,2.$  We exploit this minimality  condition on $\varrho_1$, $\varrho_2$  to  obtain
\begin{equation}\label{eq:twist3} 
0 \leq \Bigl(\mathscr{S}_{\theta_0}[P](\varrho_2, z_1)-\mathscr{S}_{\theta_0}[P](\varrho_1,z_1) \Bigr) +\Bigl(\mathscr{S}_{\theta_0}[P](\varrho_1,z_2)-\mathscr{S}_{\theta_0}[P](\varrho_2,z_2)\Bigr) =-\int_{\varrho_1}^{\varrho_2} ds \int_{z_1}^{z_2} {\partial^2 \mathscr{S}_{\theta_0}[P] \over \partial z \partial s}(s,z) dz.
\end{equation} 
In light of \eqref{eq:twist2}, the equation \eqref{eq:twist3}  implies the following: if $z_1 <z_2,$ then   $\varrho_1 \leq \varrho_2.$ \endproof

\begin{lemma}\label{lem: varrho monotone}
Let $A>0$ and $P\in \calE_A$. Let $z_0\in(0,H)$. Assume that condition (A2) holds and that $P$ is Lipschitz continuous such that $\partial_{z}P\geq 0\quad\calL^3- a.e$. Let $\varrho_1, \varrho_2:  [0,H]\longrightarrow [0,1/(2r_0^2))$ be two maps defined in such a way that ${\varrho}_i(z)$ are minimizers of $\mathscr{S}_{\theta_0}[P](\cdot,z)$ over $[0,1/(2r_0^2))$.  Then, the following hold:
\begin{enumerate}
\item[(i)] $\varrho_1(z_0)$ and $\varrho_2$ are monotone. 
\item[(ii)] Assume ${\varrho}_i$ are continuous at $z_0$. Then,  $\varrho_1(z_0)=\varrho_2(z_0)$. 
\item[(iii)]  $\mathscr{S}_{\theta_0}[P](\cdot,z)$ has a unique minimizer over $[0,1/(2r_0^2))$  for almost every $z$ with respect to Lebesgue. 
\end{enumerate}
\end{lemma}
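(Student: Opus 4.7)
The plan rests entirely on the cross--monotonicity already established in Lemma~\ref{lem: twist}(iii): if $P\in\calE_A$ is Lipschitz continuous with $\partial_zP\ge 0$ almost everywhere and $\bar{\varrho}_i$ minimizes $\mathscr{S}_{\theta_0}[P](\cdot,z_i)$ for $i=1,2$, then $z_1\le z_2$ implies $\bar{\varrho}_1\le\bar{\varrho}_2$. I will apply this single principle in three slightly different configurations to prove (i), (ii) and (iii) in order.

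For (i), I fix $i\in\{1,2\}$ and two heights $z_1<z_2$ in $[0,H]$. Since $\varrho_i(z_1)$ and $\varrho_i(z_2)$ are minimizers at their respective heights by assumption, Lemma~\ref{lem: twist}(iii) yields $\varrho_i(z_1)\le\varrho_i(z_2)$, so each $\varrho_i$ is nondecreasing on $[0,H]$. For (ii), I apply the same inequality across the two maps: given any $z<z_0$, the values $\varrho_1(z)$ and $\varrho_2(z_0)$ are minimizers at heights $z$ and $z_0$ with $z<z_0$, so $\varrho_1(z)\le\varrho_2(z_0)$; letting $z\uparrow z_0$ and invoking the assumed continuity of $\varrho_1$ at $z_0$ yields $\varrho_1(z_0)\le\varrho_2(z_0)$, and swapping the roles of $\varrho_1$ and $\varrho_2$ gives the opposite inequality.

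For (iii), the plan is to introduce canonical selections. For each $z\in[0,H]$ let $\varrho^-(z)$ and $\varrho^+(z)$ denote the infimum and supremum of the set of minimizers of $\mathscr{S}_{\theta_0}[P](\cdot,z)$ on $[0,1/(2r_0^2))$. This set is closed by continuity of $\mathscr{S}_{\theta_0}[P](\cdot,z)$ and contained in the compact interval $[0,M_*]$ thanks to Lemma~\ref{lem: twist}(ii), so both $\varrho^-(z)$ and $\varrho^+(z)$ are themselves attained minimizers. Part~(i) applied to $\varrho^\pm$ shows that both selections are nondecreasing on $[0,H]$; consequently each admits at most countably many discontinuities, and outside the countable union of these two exceptional sets both $\varrho^-$ and $\varrho^+$ are continuous at $z$. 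At any such $z$, part~(ii) applied to the pair $(\varrho^-,\varrho^+)$ forces $\varrho^-(z)=\varrho^+(z)$, which collapses the minimizing set to a single point, so uniqueness holds at $z$. Since the exceptional set is countable it is $\calL^1$-null, which gives the claimed almost-everywhere uniqueness.

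The step I expect to be the most delicate is (iii): all three assertions ultimately reduce to Lemma~\ref{lem: twist}(iii), but (iii) requires me first to verify that the extremal selections $\varrho^\pm$ are themselves legitimate minimizer-valued maps before I can feed them back into parts (i) and (ii). This rests on the uniform bound from Lemma~\ref{lem: twist}(ii) together with the continuity of $\mathscr{S}_{\theta_0}[P](\cdot,z)$ on the compact interval $[0,M_*]$, which together ensure that the infimum and supremum over minimizers are attained.
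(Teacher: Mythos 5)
Your proposal is correct, and parts (i) and (ii) follow the paper's proof essentially verbatim: (i) is an immediate application of Lemma \ref{lem: twist}(iii), and (ii) is exactly the paper's one-sided limit argument ($\varrho_1(z_0-\delta)\leq\varrho_2(z_0)$, let $\delta\downarrow 0$, then symmetrize). The only genuine divergence is in (iii): the paper simply observes that the two given monotone selections $\varrho_1,\varrho_2$ have countably many discontinuities and hence, by (ii), coincide off a null set, which literally only proves that any \emph{fixed} pair of selections agrees a.e.\ (with a null set depending on the pair). You instead introduce the extremal selections $\varrho^-(z)=\min$ and $\varrho^+(z)=\max$ of the (nonempty, closed, compact by Lemma \ref{lem: twist}(i)--(ii)) set of minimizers, check that they are themselves minimizer-valued maps, and apply (i) and (ii) to them; since $\varrho^-(z)<\varrho^+(z)$ can only happen at a common discontinuity point, this yields the statement as written, namely uniqueness of the minimizer at a.e.\ $z$, with a single countable exceptional set. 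So your route is the same in spirit but slightly stronger and more careful at the final step, closing a small logical gap in the paper's terse argument for (iii); the only hypotheses you use beyond the paper's are the continuity of $\mathscr{S}_{\theta_0}[P](\cdot,z)$ and the uniform bound $M_*$, both already available from Lemmas \ref{le:boundonheight1} and \ref{lem: twist}.
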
  

\proof{} (i) follows from Lemma \eqref{lem: twist} (iii). Since ${\varrho}_1$ is continuous at $z_0$, $\lim_{\delta\rightarrow 0}\varrho_1(z_0-\delta)=\varrho_1(z_0)$. In light of Lemma \eqref{lem: twist} (iii), $\varrho_1(z_0-\delta)\leq \varrho_2(z_0)$ for $\delta$ small and positive. It follows that $\varrho_1(z_0)\leq \varrho_2(z_0)$. An analogous reasonning leads to $\varrho_1(z_0)\geq \varrho_2(z_0)$ which proves (ii). As $\varrho_1, \varrho_2$ are monotone, they have a countable number of discontinuous points. Thus, by (ii), $\varrho_1(z_0)=\varrho_2(z_0)$  for almost every $z$ with respect to Lebesgue. This proves (iii).
\endproof


\begin{lemma} \label{lem: compactness P,Psi} Let $l>0$ and $c_0\in\mathbb{R}$. Then, the following hold:
\begin{itemize}
\item[(i)] The set of all $(P,\Psi)\in \calU$  such that 
\begin{equation} \label{eq: super level}
(P,\Psi)\in \calU_0 \qquad \hbox{and}\qquad \mathcal{J}[\sigma](P,\Psi) \geq c_0
\end{equation}
 is precompact in $C(\bar\calW\times \bar \calI_0)\times C(\bar B_l)$.
\item[(ii)] The set $\calM$ of $P\in C(\bar\calW\times \bar \calI_0)$  such that $(P,\Psi)$ satisfies \eqref{eq: super level} for some $\Psi\in C(\bar B_l) $ is contained in  $\calE_A$ for some $A>0$.
\end{itemize}
\end{lemma}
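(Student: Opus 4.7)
The plan is to deduce (i) from the Arzelà--Ascoli theorem, with (ii) falling out as an immediate corollary of the uniform upper bound on $P$ obtained along the way. Lemma \ref{le:sudiff structure}(i) already supplies equicontinuity, via the uniform $k_0$-Lipschitz estimate on both $P$ and $\Psi$, so the real work is to establish a uniform $L^\infty$ bound on each factor.

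The easy half of the pointwise bound comes quickly. Plugging the constant map $\varrho\equiv 0$ into $\mathscr{H}$ gives $\mathscr{H}(P)\le 0$, so the hypothesis $\mathcal{J}[\sigma](P,\Psi)\ge c_0$ forces $\int\Psi\,d\sigma\le -c_0$. Since $\sigma$ is a probability measure supported in $\bar B_l^+$, this yields at least one $\q_1\in\spt\sigma$ with $\Psi(\q_1)\le -c_0$, which the $k_0$-Lipschitz estimate upgrades to $\Psi(\q)\le -c_0+2lk_0$ on all of $\bar B_l$. The $c$-transform inequality $P+\Psi\ge c\ge 0$ then yields $P\ge c_0-2lk_0$ on $\calW\times\calI_0$.

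The complementary bound $\Psi\ge -\mathrm{const}$ is where I expect the main obstacle. Indeed, any $(P,\Psi)\in\calU_0$ is formally invariant under the shift $(P,\Psi)\mapsto(P+\lambda,\Psi-\lambda)$, so the Lipschitz and $c$-conjugacy properties alone cannot close the bound: one must use the full strength of $\mathcal{J}\ge c_0$ together with the blow-up of $f_0$ near $s=1/(2r_0^2)$ (the same mechanism behind Lemma \ref{le:boundonheight1}) to pin down the shift. I would choose $\varrho_0\in(0,1/(2r_0^2))$ with $HC_1>1$, where
\[
C_1:=\int_0^{\varrho_0}\bigl(2f_0(s)/\Omega^2\bigr)^2 ds,\qquad C_2:=\int_0^{\varrho_0}f_0(s)\bigl(2f_0(s)/\Omega^2\bigr)^2 ds;
\]
this is possible because $C_1\to\infty$ as $\varrho_0\to 1/(2r_0^2)$. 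Setting $M:=-\inf_{\bar B_l^+}\Psi$, the $c$-conjugacy identity \eqref{lem:c-convex 1} applied at a near-minimizer together with $c\ge 0$ forces $P\ge M$ on $\calW\times\calI_0$; inserting this into the integrand of $\mathscr{S}_{\theta_0}[P](\varrho_0,\cdot)$ gives $\mathscr{H}(P)\le H(C_2-MC_1)$. On the other hand, $\mathcal{J}\ge c_0$ together with $\int\Psi\,d\sigma\ge -M$ produces $\mathscr{H}(P)\ge c_0-M$. Comparing the two inequalities yields $M(HC_1-1)\le HC_2-c_0$, and since $HC_1-1>0$ by construction, $M$ is bounded by a constant depending only on $c_0,H,\Omega,r_0,l$.

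With $\Psi$ uniformly bounded below, the identity $P(\p,m)=\sup_{\q\in\bar B_l^+}\bigl(c(\p,m,\q)-\Psi(\q)\bigr)$ together with the boundedness of $c$ on the compact set $\calW\times\calI_0\times\bar B_l^+$ gives a constant $A>0$, independent of $(P,\Psi)$, with $P\le A$. Combined with the lower bound on $P$, the upper bound on $\Psi$, and the Lipschitz equicontinuity of Step 1, Arzelà--Ascoli delivers (i); the same constant $A$ gives $\calM\subset\calE_A$, which is (ii).
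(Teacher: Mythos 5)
Your proof is correct and takes essentially the same route as the paper: equicontinuity comes from the uniform Lipschitz bound of Lemma \ref{le:sudiff structure}, and the free additive normalization $(P,\Psi)\mapsto(P+\lambda,\Psi-\lambda)$ is pinned down by testing $\mathscr{H}$ against the constant profiles $\varrho\equiv 0$ and $\varrho\equiv\varrho_0$ with $H\int_0^{\varrho_0}\left(2f_0(s)/\Omega^2\right)^2ds>1$, which is precisely the sign-flip mechanism the paper uses (there written as two-sided bounds on the value of $P$ at an arbitrary base point rather than on $-\inf\Psi$, your use of $c\geq 0$ being only a cosmetic shortcut). The resulting uniform bounds, the constant $A$ for (ii), and the Arzel\`a--Ascoli conclusion for (i) then agree with the paper's argument.
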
 
\proof{} Fix $(\frak{p},\frak{m})\in\calW\times \calI$ and  let $(P,\Psi)\in \calU_0$. By lemma \ref{le:sudiff structure}, $P$ is $k_0-$Lipschitz continuous. It follows that 
\begin{equation}\label{eq: bound P}
|P(\p,m)-P(\frak{p},\frak{m})|\leq k_0|\p-\frak{p}|+k_0|m- \frak{m}|=k_0\left( \frac{ H}{r_0^2}+2i_0\right) =:k_1 \quad  \text{with}\quad i_0=\calL^1(\calI_0),
\end{equation}
 for all $(\p,m)\in\calW\times \calI_0$. As $\Psi+P(\frak{p},\frak{m})=\left( P-P(\frak{p},\frak{m})\right)^c $ and  
 $\bar k_1:= \sup\left\lbrace c(\p,m,\q):(\p,m)\in\calW\times \calI_0,\; \q\in B_l \right\rbrace$ is finite, we have that 
 \begin{equation}\label{eq: bound Psi}
|\Psi(\q)+P(\frak{p},\frak{m})|\leq \bar k_1+ k_1=:k_2
\end{equation}
 for all $\q\in B_l$. We observe that 
 \begin{equation}\label{eq: estimate J}
 \mathcal{J}[\sigma](P,\Psi)\leq  -\int_{\mathbb{R}^2}\Psi d\sigma + \int_{0}^{H}\int_0^{\bar\varrho} \Bigl( f_0(s)-P(s,z,\theta_0(z)) \Bigr) ( 2f_0(s)/\Omega^2)^2ds dz
\end{equation}
 for $(P,\Psi)\in \calU_0$ and for any constant $\bar\varrho\in [0,1/(2r_0^2)))$. In light of \eqref{eq: bound P} and \eqref{eq: bound Psi}, the  estimate \eqref{eq: estimate J} implies 
$$
\mathcal{J}[\sigma](P,\Psi)\leq P(\frak{p}, \frak{m}) +k_2 + H\int_0^{\bar\varrho} \Bigl( f_0(s)+k_1-P(\frak{p}, \frak{m})  \Bigr) ( 2f_0(s)/\Omega^2)^2ds dz 
$$
for $(P,\Psi)\in \calU_0$ and for any constant $\bar\varrho\in [0,1/(2r_0^2)))$. For $(P,\Psi)\in \calU_0$  such that $\mathcal{J}[\sigma](P,\Psi) \geq c_0$, we have
$$
c_0\leq P(\frak{p}, \frak{m})\left( 1 -H\int_0^{\bar\varrho} ( 2f_0(s)/\Omega^2)^2ds dz \right)+ k_2 + H\int_0^{\bar\varrho} \Bigl( f_0(s)+k_1  \Bigr) ( 2f_0(s)/\Omega^2)^2ds dz 
$$
for any constant $\bar\varrho\in [0,1/(2r_0^2)))$. By choosing $\bar\varrho =0$  and then $\bar\varrho =\bar \varrho_0$ where $\bar\varrho$ is such that\\ $\left( 1 -H\int_0^{\bar\varrho_0} ( 2f_0(s)/\Omega^2)^2ds dz \right) <0$ we obtain 
\begin{equation}\label{eq: unif bound P}
c_0-k_2\leq P(\frak{p}, \frak{m}) \leq  A
\end{equation}

with 
\begin{equation}\label{eq: const A}
  A= \dfrac{c_0- k_2 - H\int_0^{\bar\varrho} \Bigl( f_0(s)-k_1 \Bigr) ( 2f_0(s)/\Omega^2)^2ds dz }{\left( 1 -H\int_0^{\bar\varrho} ( 2f_0(s)/\Omega^2)^2ds dz \right)}.
\end{equation}
As $(\frak{p},\frak{m})$ is an arbitrary point in $\calW\times \calI_0$, it follows from \eqref{eq: unif bound P} that the set $\calM$  is uniformly bounded  with respect to the uniform norm and, in particular \eqref{eq: unif bound P} implies that   $P\in\calE_A$ whenever $(P,\Psi)\in \calM$. This proves (ii). As $\calM$  is uniformly bounded  the estimate \eqref{eq: bound Psi} implies that the set of $\Psi$ such that $(P,\Psi)\in \calU_0$ and $\mathcal{J}[\sigma](P,\Psi) \geq c_0$  is uniformly bounded with respect to the uniform norm. Using the uniform Lipschitz constant established in lemma \ref{le:sudiff structure}  we have  that the set of  $(P,\Psi)\in \calU_0$ such that $\mathcal{J}[\sigma](P,\Psi) \geq c_0$ is precompact which proves (i). Whenever $(P,\Psi)\in \calU_0$, $P$ is Lipschitz -thus, differentiable Lebesgue almost everywhere- and $\partial_c P\subset B_l^+$. 
\endproof

\begin{lemma} \label{le:conv Jsigma(P Psi)}
 Let $l>0$ and $\left\lbrace \sigma_n\right\rbrace_{n=0}^{\infty}$ such that $\spt(\sigma_n)\subset B_l^+$ and $\left\lbrace \sigma_n\right\rbrace_{n=1}^{\infty}$ converges narrowly to $\sigma_0$. Let  $( P_0,\Psi_0)\subset\calU$ and $\left\lbrace( P_n,\Psi_n)\right\rbrace_{n=1}^{\infty}\subset\calU_0$ such $\left\lbrace P_n\right\rbrace_{n=1}^{\infty}$ converges uniformly to $P_0$ on compact subsets of $\calW\times \calI_0$ and $\left\lbrace \Psi_{n}\right\rbrace_{n=1}^{\infty}$  converges uniformly to $\Psi_0$ on $B_l$. Then, $\left\lbrace \calJ[\sigma_n]\left( P_n, \Psi_{n}\right) \right\rbrace_{n=1}^{\infty}$ converges to  $\calJ[\sigma_0]\left( P_0, \Psi_{0}\right)$.
\end{lemma}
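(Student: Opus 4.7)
The plan is to split $\calJ[\sigma_n](P_n,\Psi_n) = -\int_{\mathbb{R}^2}\Psi_n\,d\sigma_n + \mathscr{H}(P_n)$ and prove each piece converges to its counterpart for $(P_0,\Psi_0)$. For the linear term, the triangle inequality gives
\[
\Bigl|\int\Psi_n\,d\sigma_n - \int\Psi_0\,d\sigma_0\Bigr| \leq \|\Psi_n - \Psi_0\|_{L^\infty(B_l)} + \Bigl|\int\Psi_0\,d\sigma_n - \int\Psi_0\,d\sigma_0\Bigr|.
\]
The first summand vanishes by the assumed uniform convergence of $\Psi_n$ together with $\sigma_n(\mathbb{R}^2)=1$. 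For the second, all supports lie in the common compact set $\bar B_l^+$, so I extend $\Psi_0$ from $\bar B_l^+$ to a bounded continuous function on $\mathbb{R}^2$ (for example by Tietze) and apply narrow convergence $\sigma_n \rightharpoonup \sigma_0$ directly.

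For the second piece $\mathscr{H}(P_n)\to \mathscr{H}(P_0)$, I begin by extracting uniform $L^\infty$ bounds. Uniform convergence of $\Psi_n$ on $B_l$ and the $c$-transform identity $P_n(\p,m)=\sup_{\q\in\bar B_l^+}\bigl(c(\p,m,\q)-\Psi_n(\q)\bigr)$ produce $A>0$ with $P_n\leq A$ on $\calW\times\calI_0$ for every $n$, and passing to the limit on compacts gives $P_0\leq A$ as well. Next, Lemma \ref{le:boundonheight1}, whose proof uses only the pointwise bound $P\leq A$ and not the $c$-convex structure of $\calE_A$, together with Lemma \ref{lem: twist}(ii), produces $M_*<1/(2r_0^2)$ depending only on $A$ such that every minimizer of $\mathscr{S}_{\theta_0}[P](\cdot,z)$ over $[0,1/(2r_0^2))$ lies in $[0,M_*]$, simultaneously for $P=P_0,P_1,P_2,\ldots$. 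A standard Borel selection of pointwise minimizers (available from the continuity of $\mathscr{S}_{\theta_0}[P]$ in $(\varrho,z)$) lets me rewrite
\[
\mathscr{H}(P) = \int_0^H \min_{\varrho\in[0,M_*]}\mathscr{S}_{\theta_0}[P](\varrho,z)\,dz
\]
for each $P\in\{P_0,P_1,\ldots\}$.

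Once the infimum has been localized on the compact set $[0,M_*]$, convergence is routine. The set $K := [0,M_*]\times[0,H]\times\theta_0([0,H])$ is a compact subset of $\calW\times\calI_0$ on which $P_n\to P_0$ uniformly, and the direct estimate
\[
\sup_{(\varrho,z)\in [0,M_*]\times[0,H]}\bigl|\mathscr{S}_{\theta_0}[P_n](\varrho,z) - \mathscr{S}_{\theta_0}[P_0](\varrho,z)\bigr| \leq \|P_n-P_0\|_{L^\infty(K)}\int_0^{M_*}\bigl(2f_0(s)/\Omega^2\bigr)^2\, ds
\]
tends to zero. Since $|\min f_n - \min f|\leq \|f_n - f\|_\infty$, taking pointwise minima over $\varrho\in [0,M_*]$ preserves uniform convergence in $z$, and integrating over $[0,H]$ yields $\mathscr{H}(P_n)\to\mathscr{H}(P_0)$. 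Combined with the first piece, this proves the lemma. The main obstacle is ensuring that the cutoff $M_*$ applies uniformly to the sequence and to the limit $P_0$, even though $(P_0,\Psi_0)$ is only assumed to lie in $\calU$, not $\calU_0$; this is handled by noting that the coercivity estimate in Lemma \ref{le:boundonheight1} depends solely on the pointwise bound $P\leq A$.
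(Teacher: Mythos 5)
Your proof is correct, and for the key step it takes a genuinely different route from the paper. The treatment of the linear term $\int\Psi_n\,d\sigma_n$ is the same in both arguments (uniform convergence of $\Psi_n$ plus narrow convergence of $\sigma_n$ on the common compact support). For the convergence $\mathscr{H}(P_n)\to\mathscr{H}(P_0)$, however, the paper works with \emph{minimizer selections}: it picks monotone $\varrho_n$ with $\varrho_n(z)$ minimizing $\mathscr{S}_{\theta_0}[P_n](\cdot,z)$, invokes Lemma \ref{lem: twist}(ii) for the uniform cutoff $M_*$, extracts a pointwise limit by Helly's theorem, identifies the limit with $\varrho_0$ through the a.e.-uniqueness statement of Lemma \ref{lem: varrho monotone}(iii), and concludes by dominated convergence. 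You instead work with the \emph{value function} $z\mapsto\min_{\varrho\in[0,M_*]}\mathscr{S}_{\theta_0}[P](\varrho,z)$ and the elementary bound $|\min f-\min g|\le\|f-g\|_\infty$, which gives the quantitative estimate $|\mathscr{H}(P_n)-\mathscr{H}(P_0)|\le C\,\|P_n-P_0\|_{L^\infty(K)}$ and dispenses with Helly's theorem, monotonicity, and the uniqueness lemma altogether. Your route also handles more carefully a point the paper glosses over: $(P_0,\Psi_0)$ is only assumed in $\calU$, not $\calU_0$, so $P_0$ is not literally in $\calE_A$; your observation that Lemma \ref{le:boundonheight1} (hence the cutoff $M_*$) uses only the pointwise bound $P\le A$, which passes to the locally uniform limit, closes this. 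What the paper's heavier argument buys is information about the minimizers themselves ($\varrho_n\to\varrho_0$ pointwise up to a.e. equivalence), which is reused later (e.g., in Proposition \ref{prop: stability transport}), whereas your argument only tracks the minimal values. Both approaches need, and you correctly flag, a Borel selection (or a piecewise-constant approximation) of pointwise minimizers to identify $\mathscr{H}(P)$ with $\int_0^H\min_{\varrho\in[0,M_*]}\mathscr{S}_{\theta_0}[P](\varrho,z)\,dz$; the paper uses the same kind of selection implicitly via Lemmas \ref{lem: twist} and \ref{lem: varrho monotone}.
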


\proof{} Let $\varrho_n\in \calR$  such that $\varrho_n$ is monotone and $\varrho_n(z)$ is the minimizer of  $\mathscr{S}_{\theta_0}[P_n](\cdot,z)$ over $[0, 1/(2r_0^2))$ for $n\geq 0$ and for each $z\in[0,H]$ fixed, as provided by lemma \ref{lem: twist}(i) and lemma \ref{lem: varrho monotone} (i). By lemma \ref{lem: twist}(ii) there exists $M_* >0$ such that 
$$0\leq 2r_0^2\varrho_n(z)\leq 2M_*r_0^2<1$$
 for $n\geq 0$ and $z\in[0,H]$. Helly's theorem ensures that $\left\lbrace \varrho_n \right\rbrace_{n=1}^{\infty} $ converges pointwise  -up to a subsequence denoted again $\left\lbrace \varrho_n \right\rbrace_{n=1}^{\infty} $- to $\bar\varrho$. We set $\calW_{M_*}= [0, M_*]\times[0,H]$. Note that $\mathscr{S}_{\theta_0}[P_n](\cdot,z)$ is uniformly bounded on $\calW_{M_*}$.  As $\left\lbrace P_n\right\rbrace_{n=1}^{\infty}$ converges uniformly to $P_0$ on  $\calW_{M_*}\times\calI_0$, we easily check that $\left\lbrace \mathscr{S}_{\theta_0}[P_n]\right\rbrace_{n=1}^{\infty}$ converges  uniformly to $\mathscr{S}_{\theta_0}[P_0]$ on  $\calW_{M_*}$. As a result, $\bar\varrho$  minimizes   $\mathscr{S}_{\theta_0}[P_0](\cdot,z)$ over $[0, 1/(2r_0^2))$. In light of lemma \ref{lem: varrho monotone} (iii), it follows that $\bar \varrho= \varrho_0$ almost everywhere with respect to Lebesgue. By the definition of $\varrho_n$, it is straightforward that $\calH(P_n)= \int_0^H\mathscr{S}_{\theta_0}[P_n](\varrho_n(z),z)dz$ and so, the Lebesgue dominated convergence ensures that  $\left\lbrace \calH(P_n)\right\rbrace_{n=1}^{\infty}$ converges to $\calH(P_0)$. Thus, 
\begin{equation}\label{eq: limsup Jn-J0}
\begin{aligned}
\limsup_{n\rightarrow\infty}\Big \vert \calJ[\sigma_n]\left( P_n, \Psi_{n}\right)-\calJ[\sigma_0]\left( P_0, \Psi_{0}\right)\Big \vert\leq \limsup_{n\rightarrow\infty}\Big \vert\int_{B_l^+} \Psi_n \sigma_n(d\q)-\int_{B_l^+} \Psi_0 \sigma_0(d\q)\Big \vert.
\end{aligned}
\end{equation}
 As $\left\lbrace \sigma_n\right\rbrace_{n=0}^{\infty}$ converges narrowly to $\sigma_0$ and  $\left\lbrace \Psi_n\right\rbrace_{n=1}^{\infty}$ converges uniformly to $\Psi_0$ on  $B_l^+$, we get that 
\begin{equation}\label{eq:limsup Psin}
\begin{aligned}
\limsup_{n\rightarrow\infty}\Big \vert\int_{B_l^+} \Psi_n \sigma_n(d\q)-&\int_{B_l^+} \Psi_0 \sigma_0(d\q)\Big \vert\\
&\leq \limsup_{n\rightarrow\infty}\int_{B_l^+} \Big \vert\Psi_n - \Psi_0\Big \vert\sigma_n(d\q)+\limsup_{n\rightarrow\infty}\Big \vert\int_{B_l^+} \Psi_0 \sigma_n(d\q)-\int_{B_l^+} \Psi_0 \sigma_0(d\q)\Big \vert
=0.
\end{aligned}
\end{equation}
It follows from \eqref{eq: limsup Jn-J0} and \eqref{eq:limsup Psin} that $\left\lbrace \calJ[\sigma_n]\left( P_n, \Psi_{n}\right) \right\rbrace_{n=1}^{\infty}$ converges to  $\calJ[\sigma_0]\left( P_0, \Psi_{0}\right)$.
\endproof

\begin{proposition}\label{Proposition MA} 
Let $l>0$ and $\sigma\in \mathscr{P}\left(\mathbb{R}^2 \right) $ such that $\spt(\sigma)\subset B_l$.  $\calJ[\sigma]$   admit a maximizer over $\calU_0$.

\end{proposition}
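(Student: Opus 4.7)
The plan is to apply the direct method of the calculus of variations, relying on the two main tools already developed: the precompactness of the super-level sets of $\calJ[\sigma]$ in the uniform topology (Lemma \ref{lem: compactness P,Psi}) and the continuity of $\calJ[\sigma]$ under uniform convergence of $c$-conjugate pairs (Lemma \ref{le:conv Jsigma(P Psi)}).

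First I would check that $M:=\sup\{\calJ[\sigma](P,\Psi):(P,\Psi)\in\calU_0\}$ is a finite real number. That $M<+\infty$ follows from Proposition \ref{prop:optimality cond}(1): for any fixed $\varrho\in\calR_0$ (e.g.\ a small constant strictly less than $1/(2r_0^2)$) one has $\calJ[\sigma](P,\Psi)\leq \calK[\sigma](\varrho)-\mathbf{m}_2[\sigma]<\infty$, using that $\spt(\sigma)\subset B_l$ makes the right-hand side finite. For $M>-\infty$, it suffices to exhibit one element of $\calU_0$: starting from any continuous bounded $\bar\Psi$ on $\bar B_l^+$ (e.g.\ $\bar\Psi\equiv 0$), the pair $(\bar\Psi^c,(\bar\Psi^c)_c)$ belongs to $\calU_0$, by the standard fact that two consecutive $c$-transforms produce a $c$-conjugate pair, and $\calJ[\sigma]$ is finite on such a pair.

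Next, I would pick a maximizing sequence $\{(P_n,\Psi_n)\}\subset\calU_0$ with $\calJ[\sigma](P_n,\Psi_n)\uparrow M$, so that eventually $\calJ[\sigma](P_n,\Psi_n)\geq M-1=:c_0$. Lemma \ref{lem: compactness P,Psi}(i) then provides subsequences (not relabeled) with $P_n\to P_0$ uniformly on $\bar\calW\times\bar\calI_0$ and $\Psi_n\to\Psi_0$ uniformly on $\bar B_l$. The inequality $P_n(\p,m)+\Psi_n(\q)\geq c(\p,m,\q)$ passes to the limit pointwise, showing $(P_0,\Psi_0)\in\calU$. Moreover, uniform convergence of $\Psi_n$ together with compactness of $\bar B_l^+$ gives
\[
\sup_{\q\in\bar B_l^+}\bigl\{c(\p,m,\q)-\Psi_n(\q)\bigr\}\longrightarrow \sup_{\q\in\bar B_l^+}\bigl\{c(\p,m,\q)-\Psi_0(\q)\bigr\}
\]
uniformly in $(\p,m)\in\bar\calW\times\bar\calI_0$, and symmetrically for the other transform; hence $P_0=\Psi_0^c$ and $\Psi_0=(P_0)_c$, i.e.\ $(P_0,\Psi_0)\in\calU_0$. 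A direct application of Lemma \ref{le:conv Jsigma(P Psi)} with $\sigma_n\equiv\sigma$ then yields $\calJ[\sigma](P_n,\Psi_n)\to\calJ[\sigma](P_0,\Psi_0)$, which forces $\calJ[\sigma](P_0,\Psi_0)=M$ and identifies $(P_0,\Psi_0)$ as a maximizer.

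The delicate step is ensuring that the limit pair remains in $\calU_0$ rather than merely in $\calU$; this hinges on the continuity of the $c$-transform operation under uniform convergence, which itself is a consequence of the uniform continuity of the cost $c$ on the compact set $\bar\calW\times\bar\calI_0\times\bar B_l^+$. Once that is in place, the remaining steps are routine consequences of the lemmas established in the preceding subsections.
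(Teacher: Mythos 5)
Your proposal is correct and follows essentially the same route as the paper: a maximizing sequence, the precompactness of super-level sets from Lemma \ref{lem: compactness P,Psi}, and passage to the limit in the functional via Lemma \ref{le:conv Jsigma(P Psi)} with $\sigma_n\equiv\sigma$. The only (welcome) difference is that you verify explicitly that the uniform limit remains a $c$-conjugate pair in $\calU_0$, a point the paper handles implicitly by working with maximizing sequences in $\calU$ and replacing each term by its double $c$-transform, which can only increase $\calJ[\sigma]$.
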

\proof{}
Note that $\calJ[\sigma]\not\equiv\infty$. Indeed, set 
$$\mathfrak{c}_0=\sup_{\mathcal{W}\times \mathcal{I}_0\times B_l}c(\p,m,\q)\qquad P_{00}=\mathfrak{c}_0/2 \qquad \hbox{and }\Psi_{00} =\mathfrak{c}_0/2.$$
Then, $(P_{00}, \Psi_{00})\in \calU$ and $\frak{c}_{00} :=\calJ[\sigma](P_{00},\Psi_{00})$ is finite. Let  $\{( P_n, \Psi_n)\}_{n=1}^{\infty} \subset \calU$ be a maximizing sequence of $\calJ[\sigma]$. One can easily check that $ P_n\leq (P_{n\;c})^c$, $ \Psi_n\leq (P_{n\;c})$ and that $\calJ[\sigma](P_n,\Psi_n)\leq \calJ[\sigma](P_{n\;c})^c,P_{n\;c})$. As $ \{(P_{n\;c})^c, (P_{n\;c})\}\in\calU_0$, we assume without loss of generality that the maximizing sequence $\{( P_n, \Psi_n)\}_{n=1}^{\infty} \subset \calU_0$. Therefore, $\mathcal{J}[\sigma]( P_n, \Psi_n) >\frak{c}_{00} $ for $ n\geq n_0$ for some positive integer $n_0$. In light of lemma \ref{lem: compactness P,Psi}, there exists a subsequence of $\{( P_n, \Psi_n)\}_{n=1}^\infty $ that we denote again by $\{( P_n, \Psi_n)\}_{n=1}^\infty$ that   converges uniformly to $  (P_0,\Psi_0)$. By lemma \ref{le:conv Jsigma(P Psi)}, we have that $\{\calJ[\sigma](P_n, \Psi_n)\}_{n=1}^\infty $  converges to $ \calJ[\sigma](P_0,\Psi_0)$. As a result, $(P_0,\Psi_0) $ is a maximizer of $\calJ[\sigma]$ over $\calU$ and we have $\calJ[\sigma](P_0,\Psi_0)\leq \calJ[\sigma](P_{0\;c})^c,P_{0\;c})$. This concludes the proof \endproof
 
\subsection{Existence of a minimizer in the primal problem}\hfill\\
In this section, we show the existence and uniqueness of the minimizer in variational problem. This result is achieved through the study of dual problem. Subsequently, we obtain a solution for problem \eqref{eq: main problem}.
\begin{proposition}\label{change of variables} 
Let $c_0,\;l>0$ and $\sigma\in \mathscr{P}(\mathbb{R}^2)$ such that $\spt(\sigma)\subset B^+_l$. Assume the condition (A1) and (A2) hold.
\begin{enumerate}
\item[(i)] $\calK[\sigma]$ admits a unique minimizer $\varrho_0$ over $\calR_0$. Furthermore,  if $(P_0,\Psi_0)\in \calU_0$ is a maximizer of $\calJ[\sigma]$ on $\calU$, then $\calT[P_0]:= \calA(\theta_0)\nabla P_0$ pushes $\mu_{\varrho_0}$ forward onto $\sigma$ so that $\calJ[\sigma](P_0,\Psi_0)=\calK[\sigma](\varrho_0)+ \mathbf{m}_2[\sigma] $ and $\varrho_0$ is monotone non decreasing on $[0,H]$  satisfying 
\begin{equation} \label{eq 1: prop: P on the boundary}
2(1-2r^2_{0} \varrho_0(z))P_0(\varrho_0(z),z, \theta_0(z))=r^2_0\Omega^{2} \text{ on } \{\varrho_0>0\}.
\end{equation}
If, additionally, we assume that $\sigma$ is absolutely continuous with respect to the Lebesgue measure then $\calS[\Psi_0]$, defined in \eqref{eq: subgradient1 Psi}, pushes $\sigma$  forward onto  $\mu_{\varrho_0}$ and we have
\begin{equation} \label{eq 1: inversibility of the gradient of P}
\mathcal{S}[\Psi_0]\circ\mathcal{T}[P_0]= \id \quad  \mu_{\varrho_0} \quad a.e  \quad \mathcal{T}[P_0]\circ\mathcal{S}[\Psi_0]= \id \quad a.e \quad \sigma.
\end{equation}
 \item[(ii)]  Assume  $\sigma$ is absolutely continuous with respect to the Lebesgue measure such that $\frac{\partial\sigma}{\partial\calL^2}>c_0\quad\calL^2-a.e$ and that $\spt(\sigma)=B_l^+$. If $(P_0,\Psi_0),\;(P_1,\Psi_1)\in\calU_0$ are  such that $(P_0,\Psi_0)$ is a maximizer of $\calJ[\sigma]$ and $\calJ[\sigma](P_0,\Psi_0)= \calJ[\sigma](P_1,\Psi_1)$ then we have that $P_1=P_0 \quad\text{on}\quad \calW\times \calI_0 $ and $\Psi_1=\Psi_0\quad \text{on}\quad B_l^+$.
 
 \item[(iii)] Assume that (A1') holds and that $(P_0,\Psi_0)$ is a maximizer of $\calJ[\sigma]$ such that $\partial_{z} P_0\geq b_0\quad \calL^3-a.e$ for some $b_0>0$. For any $z_1,z_2\in[0,H]$ such that $\varrho_0(z_1), \varrho_0(z_2) >0$,  there exists $C>0$ such that
 \begin{equation}\label{eq: Lip boundary D}
 |z_2-z_1|\leq C|\varrho_0(z_2)-\varrho_0(z_1)|.
 \end{equation}
\end{enumerate}
\end{proposition}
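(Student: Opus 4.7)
\proof{of Proposition \ref{change of variables}}
My plan is to exploit the duality already established in Proposition \ref{prop:optimality cond} together with the existence of a dual maximizer from Proposition \ref{Proposition MA}, then read off the primal minimizer and all three claimed properties from the optimality conditions.

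For part (i), let $(P_0,\Psi_0)\in\calU_0$ be a maximizer of $\calJ[\sigma]$ over $\calU$. By Lemma \ref{lem: compactness P,Psi} (ii), $P_0\in\calE_A$ for some $A>0$, so by Lemma \ref{lem: twist} (i) the function $\mathscr{S}_{\theta_0}[P_0](\cdot,z)$ admits, for each $z\in[0,H]$, a minimizer on $[0,1/(2r_0^2))$; by Remark \ref{rmk: partial P} we have $\partial_{z}P_0\geq 0$ a.e., hence Lemma \ref{lem: varrho monotone} (i) gives a monotone selection $\varrho_0\in\calR_0$ (bounded above by $M_*<1/(2r_0^2)$ thanks to Lemma \ref{lem: twist} (ii)) satisfying $\mathscr{H}(P_0)=\int_0^H\mathscr{S}_{\theta_0}[P_0](\varrho_0(z),z)dz$. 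The first-order condition $\partial_{\varrho}\mathscr{S}_{\theta_0}[P_0](\varrho_0(z),z)=0$ on $\{\varrho_0>0\}$ gives $P_0(\varrho_0(z),z,\theta_0(z))=f_0(\varrho_0(z))$, which is exactly \eqref{eq 1: prop: P on the boundary}. The remaining task is to produce $\alpha_0$ as in Proposition \ref{prop:optimality cond} (2). The natural candidate is $\alpha_0=\big(\id,\theta_0\circ\pi^2,\calT[P_0]\big)\#\mu_{\varrho_0}$; the identity $P_0(\p,m)+\Psi_0(\q)=c(\p,m,\q)$ on the graph of $\calT[P_0]$ follows from Lemma \ref{le:sudiff structure} (ii) at every differentiability point. \emph{The main obstacle is then to verify $\pi^{3,4}\#\alpha_0=\sigma$, i.e.\ that $\calT[P_0]\#\mu_{\varrho_0}=\sigma$.} For this I would perturb $\Psi_0$ along a smooth test direction preserving c-convexity via the c-transform construction (as in the standard Kantorovich duality argument), differentiate the resulting family of dual values at $\varepsilon=0$, and use maximality to conclude that the first variation vanishes, which is equivalent to $\calT[P_0]\#\mu_{\varrho_0}=\sigma$. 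Once this is established, Proposition \ref{prop:optimality cond} (2) forces $\calJ[\sigma](P_0,\Psi_0)+\mathbf{m}_2[\sigma]=\calK[\sigma](\varrho_0)$, so $\varrho_0$ is a minimizer. Uniqueness of $\varrho_0$ in $\calR_0$ follows because any other minimizer must, combined with $(P_0,\Psi_0)$, realize equality in the duality, hence satisfy \eqref{eq: equality in scrH}, and by Lemma \ref{lem: varrho monotone} (iii) the pointwise minimizer of $\mathscr{S}_{\theta_0}[P_0](\cdot,z)$ is unique for a.e.\ $z$. When $\sigma\ll\calL^2$, the c-transform identity $\Psi_0(\q)=c(\mathcal{S}[\Psi_0](\q),\theta_0(\pi^2\mathcal{S}[\Psi_0](\q)),\q)-P_0(\mathcal{S}[\Psi_0](\q),\theta_0(\pi^2\mathcal{S}[\Psi_0](\q)))$ at $\sigma$-a.e.\ point, combined with Lemma \ref{le:sudiff structure} (iii), gives $\calS[\Psi_0]\#\sigma=\mu_{\varrho_0}$ and the two inversion relations \eqref{eq 1: inversibility of the gradient of P}.

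For part (ii), the strictly positive lower bound $c_0$ on the density of $\sigma$ on the full support $B_l^+$ is used as follows. Suppose $(P_1,\Psi_1)$ is another maximizer. By convexity of $(P,\Psi)\mapsto\calJ[\sigma](P,\Psi)$ (from the $\sup$ and $\inf$ in its definition) the midpoint is again a maximizer, and part (i) applied to $(P_0,\Psi_0)$, $(P_1,\Psi_1)$, and the midpoint produces the same unique $\varrho_0$. Differentiating the midpoint's c-convexity relation $\calS[(\Psi_0+\Psi_1)/2]\#\sigma=\mu_{\varrho_0}$ and comparing with $\calS[\Psi_i]\#\sigma=\mu_{\varrho_0}$, together with the Brenier-type uniqueness of transport maps guaranteed by the positivity $d\sigma/d\calL^2>c_0$ on $B_l^+$, forces $\nabla\Psi_0=\nabla\Psi_1$ $\sigma$-a.e. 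Connectedness of $B_l^+$ and continuity then give $\Psi_0=\Psi_1$ up to a constant; but both $\Psi_i$ being c-transforms of $P_i$ fixes the constant, so $\Psi_0\equiv\Psi_1$ on $B_l^+$, and consequently $P_0\equiv P_1$ on $\calW\times\calI_0$ by the c-transform formula \eqref{lem:c-convex 1}.

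For part (iii), set $G(\varrho,z):=2(1-2r_0^2\varrho)P_0(\varrho,z,\theta_0(z))-r_0^2\Omega^2$, so that $G(\varrho_0(z),z)=0$ whenever $\varrho_0(z)>0$. Using the Lipschitz continuity of $P_0$ from Lemma \ref{le:sudiff structure} (i) and the bound $\varrho_0\leq M_*<1/(2r_0^2)$ from Lemma \ref{lem: twist} (ii), one obtains $|G(\varrho_0(z_2),z_2)-G(\varrho_0(z_1),z_2)|\leq C_1|\varrho_0(z_2)-\varrho_0(z_1)|$ for a constant $C_1$ depending only on $k_0$, $M_*$ and $\Omega$. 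On the other hand, using \eqref{eq: diffm} to compute $\partial_z[P_0(\varrho,z,\theta_0(z))]=\partial_zP_0\cdot(\theta_0(z)-z\theta_0'(z))/\theta_0(z)$ and invoking (A1') together with the hypothesis $\partial_zP_0\geq b_0>0$ and $\theta_0\in\calI_0$ bounded, one gets $\partial_zG(\varrho_0(z_1),z)\geq c_2>0$, hence $G(\varrho_0(z_1),z_2)-G(\varrho_0(z_1),z_1)\geq c_2(z_2-z_1)$ for $z_1<z_2$. Adding the two estimates with $G(\varrho_0(z_i),z_i)=0$ yields $c_2(z_2-z_1)\leq C_1|\varrho_0(z_2)-\varrho_0(z_1)|$, which is \eqref{eq: Lip boundary D} with $C=C_1/c_2$.\endproof
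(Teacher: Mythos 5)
Your parts (i) and (iii) follow essentially the paper's own route. In (i), the step you flag as ``the main obstacle'' is treated in the paper exactly by the perturbation you propose ($\Psi_\kappa=\Psi_0+\kappa h$, $P_\kappa$ its $c$-transform); the real work there is the envelope-type differentiation of $\mathscr{H}(P_\kappa)$, which contains an inner infimum and therefore needs the stability $\varrho_\kappa\to\varrho_0$ (Helly's theorem plus Lemma \ref{lem: varrho monotone}(iii)) before the first variation can be computed -- your plan is the right one, it just stops short of this point. Your uniqueness argument for $\varrho_0$ via \eqref{eq: equality in scrH} and Lemma \ref{lem: varrho monotone}(iii) is fine, and your function $G$ in (iii) is just $2(1-2r_0^2\varrho)$ times the paper's $Q(s,z)=f_0(s)-P_0(s,z,\theta_0(z))$, with the same two estimates.

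Part (ii), however, has a genuine gap. First, equality of the push-forwards $\calS[\Psi_0]\#\sigma=\calS[\Psi_1]\#\sigma=\mu_{\varrho_0}$ does not by itself force $\nabla\Psi_0=\nabla\Psi_1$ $\sigma$-a.e.: two distinct maps can transport $\sigma$ onto the same target, and your midpoint device does not repair this, since $\bigl((P_0+P_1)/2,(\Psi_0+\Psi_1)/2\bigr)$ lies in $\calU$ but in general not in $\calU_0$, so part (i) cannot be applied to it and the relation you propose to ``differentiate'' is not available. Moreover, Brenier-type uniqueness is a statement about optimal maps (equivalently gradients of convex functions), not about maps with a prescribed push-forward, and it is the absolute continuity of $\sigma$, not the lower bound $c_0$, that delivers it. What the paper actually uses is Proposition \ref{prop:optimality cond}(2): each maximizer yields the plan $\bigl(\mathbf{f}\circ\calS[\Psi_i],\id\bigr)\#\sigma$, optimal for $W_2(\sigma,\mathbf{f}\#\mu_{\varrho_0})$, and uniqueness of the optimal plan (because $\sigma\ll\calL^2$) gives $\calS[\Psi_0]=\calS[\Psi_1]$ $\sigma$-a.e.; equivalently one may note $\mathbf{f}\circ\calS[\Psi_i]=\nabla\Psi_i$ with $\Psi_i$ convex and invoke the uniqueness half of Brenier's theorem. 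The hypotheses $\frac{\partial\sigma}{\partial\calL^2}>c_0$ and $\spt(\sigma)=B_l^+$ only enter afterwards, to pass from a $\sigma$-a.e. identity of gradients to $\Psi_1=\Psi_0+k$ on the connected set $B_l^+$. Second, and decisively, your way of fixing the constant is wrong: the $c$-transform is equivariant under additive constants, $(P_0-k)_c=(P_0)_c+k$ and $(\Psi_0+k)^c=\Psi_0^c-k$, so $(P_0-k,\Psi_0+k)$ is again a $c$-conjugate pair and ``being $c$-transforms of each other'' cannot pin down $k$. (Maximality of $\calJ$ also does not obviously do it, since the inner infimum in $\mathscr{H}$ runs over $\calR$, not $\calR_0$; in any case that is not the argument you gave.) The paper kills the constant through the free-boundary identity \eqref{eq 1: prop: P on the boundary}: by uniqueness of the primal minimizer, both $P_0$ and $P_1=P_0-k$ satisfy $2(1-2r_0^2\varrho_0(z))P_i(\varrho_0(z),z,\theta_0(z))=r_0^2\Omega^2$ on $\{\varrho_0>0\}$, a set of positive measure since $\mu_{\varrho_0}$ is a probability measure, and this forces $k=0$. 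You need to replace both of these steps for (ii) to stand.
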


\begin{remark}\hfill
\begin{itemize}
\item  If $\spt(\sigma)\subset B_l^{b_0+}:=B_l^+\cap\calV_{b_0} $ where $ \calV_{b_0}=(0,\infty)\times(b_0,\infty)$ with $0<b_0<l$  then $\calJ[\sigma]$ admits a maximizer $(P_0,\Psi_0)$ satisfying \eqref{lem:c-convex 1} and \eqref{lem:c-convex 2} with $B_l^+$ replaced by  $B_l^{b_0+}$.  As a result, we obtain $\partial_{z} P_0\geq b_0\quad \calL^3-a.e$.
\item The estimate \eqref{eq: Lip boundary D}  implies that  the boundary of the domain $D_{\varrho_0}$ is piecewise Lipschitz  continuous. This result can be found in \cite{CulSed2014}.
\end{itemize}

\end{remark}
\proof{}
1.
Assume that   $(P_0, \Psi_0)\in\calU_{0}$ is a maximizer of $\calJ[\sigma]$ over $\calU$. Let  $\varrho_0\in \calR$  such that for each $z\in[0,H]$  $\varrho_0(z)$ is a minimizer of $\mathscr{S}_{\theta_0}[P_{0}](\cdot,z)$ over $[0,1/(2r_0^2))$. Then, if $\varrho_0(z)>0$ by differentiating $\mathscr{S}_{\theta_0}[P_{0}](\cdot,z)$ at $\varrho_0(z)$ we get  \eqref{eq 1: prop: P on the boundary}. Using the minimizing property of $\varrho_0$, we have
 $$\int_0^H\mathscr{S}_{\theta_0}[P_{0}](\varrho_0(z),z)dz\leq \int_0^H \mathscr{S}_{\theta_0}[P_{0}](\varrho(z),z)dz$$
 for all $\varrho\in\calR$. As a result,
 \begin{equation}\label{eq:scrH0}
 \mathscr{H}(P_0)= \int_0^H \mathscr{S}_{\theta_0}[P_0](\varrho_0(z),z)dz.
 \end{equation}
 For $ h \in C_c(\Reals^2)$ and $\kappa \in (-1,1)$, we set
 $$\Psi_\kappa=\Psi_0 + \kappa h \qquad \hbox{ and }\qquad  P_{\kappa}(\p,\theta_0(z))=\inf_{\q\in B_l^+}\left\lbrace c(p,\theta_0(z),\q)-\Psi_0(\q) - \kappa h(\q)\right\rbrace.$$ 
 We note that $ \left\lbrace P_{\kappa}\right\rbrace_{-1<\kappa<1}\subset C(\bar\calW\times \calI_0). $ One can show  that (cfr\cite{Gangbo95}) the following holds:
\begin{equation}\label{eq:controlPe1} 
||P_\kappa-P_0||_\infty \leq |\kappa| ||h||_\infty \qquad \hbox{ and }\qquad  \lim_{\kappa \rightarrow 0} {P_\kappa(\p,\theta_0(z))-P_0(\p,\theta_0(z)) \over \kappa}= -h(\calT[P_0](\p)) 
\end{equation} 
for Lebesgue almost every $\p \in \mathbb{R}^2$. Let $\left\lbrace \kappa_{n}\right\rbrace^{\infty} _{n=1}$ a sequence of $(-1,1)$  that converges to $0$. Let $\varrho_{\kappa_n}\in\calR $ such that $\varrho_{\kappa_n}(z)$ a minimizer of $\mathscr{S}_{\theta_0}[P_{\kappa_n}](\cdot,z)$ over $[0,1/(2r_0^2))$. It follows from lemma \ref{lem: twist} (i) and lemma \ref{lem: varrho monotone} (i) that $\left\lbrace \varrho_{\kappa_n} \right\rbrace_{n=1}^{\infty} $ is a sequence of monotone functions of $\calR$ uniformly  bounded away from $1/(2r_0^2)$. By Helly's theorem there exists a subsequence of $\left\lbrace \varrho_{\kappa_n} \right\rbrace_{n=1}^{\infty} $ still denoted $\left\lbrace \varrho_{\kappa_n} \right\rbrace_{n=1}^{\infty} $ such that $\left\lbrace \varrho_{\kappa_n} \right\rbrace_{n=1}^{\infty} $ converges to some $\bar\varrho\in \calR$. In view of the first equation of \eqref{eq:controlPe1}, $\left\lbrace P_{\kappa_n} \right\rbrace_{n=1}^{\infty} $ is a sequence of continuous functions that  converges uniformly to $P_0$ on compact subsets of $\calW\times \calI_0$  and so, $\left\lbrace\mathscr{S}_{\theta_0}[P_{\kappa_n}] \right\rbrace_{n=1}^{\infty} $ is a sequence of continuous functions that converges uniformly to $\mathscr{S}_{\theta_0}[P_{0}]$ on compact subsets of $\calW$. As a result, $\bar\varrho$ is a  minimizer of $\mathscr{S}_{\theta_0}[P_{\kappa_0}](\cdot,z)$ over $[0,1/(2r_0^2))$. Using lemma \ref{lem: varrho monotone} (iii), we conclude that $\varrho_0=\bar\varrho$ Lebesque almost everywhere on $[0,H]$. And so,
\begin{equation}\label{eq:limitrho} 
\lim_{\kappa \rightarrow 0} \varrho_\kappa(z)= \varrho_0(z)
\end{equation} 

 for Lebesgue almost all $z\in[0,H]$. We exploit the minimizing property of $\varrho_0(z)$ to get
\begin{equation}\label{eq:minimizerrho1} 
\begin{aligned}
\mathscr{S}_{\theta_0}[P_0](\varrho_0(z),z)-\mathscr{S}_{\theta_0}[P_\kappa](\varrho_\kappa(z),z)&\leq  \mathscr{S}_{\theta_0}[P_0](\varrho_\kappa(z),z)-\mathscr{S}_{\theta_0}[P_\kappa](\rho_\kappa(z),z)\\
&=    \int_0^{\varrho_\kappa(z)} (P_\kappa(s,z)-P_0(s,z))\frac{4f^2_0(s)}{\Omega^2}ds.
\end{aligned}
\end{equation} 
 Analogously, we use the minimizing property of $\varrho_{\kappa}(z)$ to obtain 
\begin{equation}\label{eq:minimizerrho2}
\begin{aligned}
\mathscr{S}_{\theta_0}[P_\kappa](\varrho_\kappa(z),z)-\mathscr{S}_{\theta_0}[P_0](\varrho_0(z),z)&\leq \mathscr{S}_{\theta_0}[P_\kappa](\varrho_0(z),z)-\mathscr{S}_{\theta_0}[P_0](\varrho_0(z),z)\\
 &=   - \int_0^{\varrho_0(z)} (P_\kappa(s,z)-P_0(s,z))\frac{4f^2_0(s)}{\Omega^2} ds.
\end{aligned}
\end{equation} 
We combine \eqref{eq:minimizerrho1} and \eqref{eq:minimizerrho2} to get that 
\begin{equation}\label{eq:minimizerrho3}
a(\kappa)\leq \mathscr{H}(P_0)-\mathscr{H}(P_{\kappa})\leq b(\kappa)
\end{equation}
with 
$$ a(\kappa)=\int_0^H \int_0^{\varrho_0(z)} (P_\kappa(s,z)-P_0(s,z))\frac{4f^2_0(s)}{\Omega^2} ds dz \quad\hbox{and} \quad  b(\kappa)=\int_0^H \int_0^{\varrho_{\kappa}(z)} (P_\kappa(s,z)-P_0(s,z))\frac{4f^2_0(s)}{\Omega^2} ds dz  $$
 By lemma \ref{lem: twist}, we  choose $M_*$ such that  
\begin{equation}\label{eq:hdeltabounded}
0\leq 2r_0\varrho_0(z),2r_0\varrho_{\kappa_n}(z)\leq  2r_0^2 M_*<1
\end{equation}
for  $z\in[0,H]$ and $n\geq 1$. As $f_0$ is bounded on $[0,M_*]$,
\begin{equation}\label{eq:scrH2}
\begin{aligned}
 \Big|b_{\kappa}-a_{\kappa}\Big|&\leq \Big|\int_0^H dz\int_{\varrho_0(z)}^{\varrho_{\kappa}(z)} \left( P_{\kappa}(s,z,\theta_0(z))-P_0(s,z,\theta_0(z))\right) \frac{4f^2_0(s)}{\Omega^2} ds \Big|\\
                                &\leq |\kappa|\max_{[0,M_*]}\frac{4f^2_0(s)}{\Omega^2}||h||_{\infty}\int_0^H |\varrho_{\kappa}(z)-\varrho_0(z)|dz.
 \end{aligned}
\end{equation}
It follows that
\begin{equation}\label{eq:scrH3}
 \limsup_{\kappa\rightarrow 0} \frac{1}{|\kappa|}\Big|b_{\kappa}-a_{\kappa}\Big|\leq \max_{[0,M_*]}\frac{4f^2_0(s)}{\Omega^2}||h||_{\infty}\limsup_{\kappa\rightarrow 0}\int_0^H |\varrho_{\kappa}(z)-\varrho_0(z)|dz=0.
\end{equation}

By the Lebesgue dominated convergence theorem, (\ref{eq:controlPe1}) implies that 
\begin{equation}\label{eq:scrH4}
\begin{aligned}
\lim_{\kappa\rightarrow 0}\left( a_{\kappa}/\kappa\right)& =\lim_{\kappa\rightarrow 0} \int_0^H dz\int_0^{\varrho_0(z)} \dfrac{(P_{\kappa}(s,z,\theta_0(z))-P_0(s,z,\theta_0(z))}{\kappa} \frac{4f^2_0(s)}{\Omega^2}ds \\
&=-\int_0^H \int_0^{\varrho_0(z)}h(\calT[P](p)) \frac{4f^2_0(s)}{\Omega^2}dsdz.
\end{aligned}
\end{equation}
We combine \eqref{eq:scrH3} and \eqref{eq:scrH4} to get that 
\begin{equation}\label{eqlimH}
\lim_{\kappa\rightarrow 0}\dfrac{\mathscr{H}(P_{\kappa})-\mathscr{H}(P_0)}{\kappa}= -\int_{\mathbb{R}^2} h(\calT[P_0](p))d\mu_{\varrho_0}.
\end{equation}

We note that 
\begin{equation}\label{eq: variational ratio}
{\mathcal{J}[\sigma](P_{\kappa}, \Psi_{\kappa})-\mathcal{J}[\sigma](P_0, \Psi_0) \over \kappa}=  -\int_{B_l} h d\sigma + \dfrac{\mathscr{H}(P_{\kappa})- \mathscr{H}(P_0)}{\kappa}.
\end{equation}
 We use \eqref{eqlimH} and \eqref{eq: variational ratio} to get 
\begin{equation}\label{eq:limitJje11}   
\lim_{\kappa \rightarrow 0}  {\mathcal{J} [\sigma](P_\kappa, \Psi_\kappa)-\mathcal{J}[\sigma](P_0, \Psi_0) \over \kappa}= -\int_{B_l} h d\sigma +  \int_{\mathbb{R}^2}  h(\calT[P_0])\; d\mu_{\varrho_0}. 
\end{equation}  
Since $(P_0,\Psi_0)$ maximizes $\mathcal{J}[\sigma]$ over $\calU$  and  $(P_\kappa, \Psi_\kappa) \in \calU $, (\ref{eq:limitJje11}) implies that  
\begin{equation}\label{eq:limitJje12}   
\int_{B_l} h d\sigma=\int_{\mathbb{R}^2}  h(\calT[P_0])\; d\mu_{\varrho_0}. 
\end{equation} 
 As $h \in C_c(\Reals^2)$ is arbitrary, we have that (\ref{eq:limitJje12}) implies that $\calT[P_0]\# \mu_{\varrho_{0}}=\sigma$. By lemma \ref{le:sudiff structure}, $\calT[P_0](\p)\in \partial^c P_0(\p,\theta_0(z))$ for almost every $\p\in\calW$. As $\mu_{\varrho_0}$ is absolutely continuous with respect to Lebesgue, we have 
 \begin{equation} \label{eq: Opt with P}
P_0(\p,\theta_0(z))+\Psi_0(\calT[P_0](\p))= c(\p,\theta_0(z),\calT[P_0](\p))\;\qquad \mu_{\varrho_0}-a.e \end{equation}
  that is, 
  $$P_0(\p,m)+\Psi_0(\q)= c(\p,m,\q)\; \alpha_0-a.e\; \quad \text{where} \quad \alpha_0=\left(\id,\theta_0\circ\pi^2,\calT[P_0] \right)\#\mu_{\varrho_0}. $$ 
  This, combined with \eqref{eq:scrH0} yields $\calK[\sigma](\varrho_0)+ \mathbf{m}_2[\sigma]=\calJ[\sigma](P_0,\Psi_0)$ in light of Proposition \ref{prop:optimality cond}. As a result, $\varrho_0$ is a minimizer of $\calK[\sigma]$ over $\calR_0$.

5. Assume $\sigma$ is absolutely continuous with respect to Lebesgue. A similar reasoning as above yields that $\calS[\Psi_0]\# \sigma=\mu_{\varrho_{0}}$. As $\left( \calS[\Psi_0](\q), \theta_0\circ\phi^{-1}(\partial_Z\Psi_0(\q))\right) $ belongs to $\partial^c\Psi(\q)$ for Lebesgue almost every $\q\in B_l$ we have
\begin{equation} \label{eq: Opt with Psi}
P_0\left( \calS[\Psi_0](\q), \theta_0\circ\phi^{-1}(\partial_Z\Psi_0(\q))\right)+\Psi_0(\q)= c\left( \calS[\Psi_0](\q), \theta_0\circ\phi^{-1}(\partial_Z\Psi_0(\q)),\q\right)\;\qquad \sigma-a.e. 
\end{equation}
 Using lemma \ref{le:sudiff structure}, the results in  \eqref{eq: Opt with P} and \eqref{eq: Opt with Psi} imply that $ \calT[P_0]\circ\calS[\Psi_0](\q)=\q \quad\sigma-a.e$ and $\calS[\Psi_0]\circ \calT[P_0](\p)= \p\quad \mu_{\varrho_0}-a.e$. It follows that $\alpha_0=\left(\calS[\Psi_0],\;\theta_0\circ\calS_1[\Psi_0],\; \id  \right)\#\sigma$. We note that 
 \begin{equation}\label{eq : unique minimizer measure 1}
\Phi\#\alpha_0=\left(\mathbf{f}\circ\calS[\Psi_0],\;\id \right)\#\sigma. 
 \end{equation}
By lemma \ref{prop:optimality cond}, $\Phi\#\alpha_0$ is the unique optimal plan between $\sigma$ and $\mathbf{f}\#\mu_{\varrho_0} $ with respect to the quadratic distance.
6. Assume $(P_1,\Psi_1)$ is another maximizer of $\calJ[\sigma]$ in $\calU_0$. In light of \eqref{eq : unique minimizer measure 1}, we have
 \begin{equation}\label{eq : unique minimizer measure 2}
 \Phi\#\alpha_0=\left(\mathbf{f}\circ\calS[\Psi_0],\;\id \right)\#\sigma=\left(\mathbf{f}\circ\calS[\Psi_1],\;\id \right)\#\sigma. 
  \end{equation}

$\Phi$ here is defined in \eqref{eq: change variables Phi}. As $\mathbf{f}$ is bijective, \eqref{eq : unique minimizer measure 2} implies that $\calS[\Psi_0]=\calS[\Psi_1]\quad \sigma-$a.e,  that is, $\partial_{\Upsilon}\Psi_0=\partial_{\Upsilon}\Psi_1$ and $\partial_{Z}\Psi_0=\partial_{Z}\Psi_1\;\sigma-$a.e. As $\sigma$ is absolutely continuous with respect to the Lebesgue measure with $\frac{\partial\sigma}{\partial\calL^2}>c_0\quad\calL^2-a.e$ and $\Psi_0,\;\Psi_1$ are Lipschitz continuous, we have $\Psi_1=\Psi_0+k$ on $B_l$ for some $k\in\mathbb{R}$. Since $(P_0,\Psi_0), (P_1,\Psi_1)\in\calU_0$ we get  
 \begin{equation}\label{eq : unique potentials}
P_1=\Psi_1^c=  \left( \Psi_0+k\right)^c =   \Psi_0^c -k=   P_0-k\qquad \hbox{on}\qquad \calW\times\calI_0.
\end{equation}
In light of \eqref{eq 1: prop: P on the boundary}, the equation \eqref{eq : unique potentials} yields that $P_1=P_0$.

7. Set $Q(s,z)=f_0(s)-P_0(s,z,\theta_0(z))$. Then, by lemma \ref{lem: compactness P,Psi} (iii)
 \begin{equation}\label{eq : estimate partial Q 1}
\partial_s Q(s,z)=f'_0(s)-\partial_s P_0(s,z,\theta_0(z))\leq ||f'||_{L^{\infty}[0,M_*]}=:c_1
\end{equation}

and
 \begin{equation}\label{eq : estimate partial Q 2}
\partial_z Q(s,z)=- \partial_z P_0(s,z,\theta_0(z))-\theta_0'(z)\partial_m P_0(s,z,\theta_0(z))=\theta_0(z)\phi'(z) \partial_z P_0(s,z,\theta_0(z)).
\end{equation}

 We use \eqref{eq : estimate partial Q 1} to obtain
 \begin{equation}\label{eq: Lips Q1}
Q(\varrho_0(z_2),z_1)-Q(\varrho_0(z_1),z_1)=\int_{\varrho_0(z_1)}^{\varrho_0(z_2)}\partial_s Q(\bar s, z_1)d\bar s \leq c_2(\varrho_0(z_2)-\varrho_0(z_1)).
\end{equation} 
We recall that $\theta_0$ has values in the bounded interval $\calI_0$. By condition (A1'), there exists $b_1>0$ such that $\theta_0\geq b_1$ and $\phi'\geq b_1$. Thus,  \eqref{eq : estimate partial Q 2} implies that $\partial_zQ(\bar s, z_1)\geq b_0 b_1^2=:c_2$. It follows that
\begin{equation}\label{eq: Lips Q2}
Q(\varrho_0(z_2),z_1)-Q(\varrho_0(z_2),z_2)=\int_{z_1}^{z_2}\partial_z Q(\varrho_0(z_2), z)dz \geq c_2(z_2-z_1).
\end{equation} 
 In view of \eqref{eq 1: prop: P on the boundary},  $Q(\varrho_0(z_1),z_1)= Q(\varrho_0(z_2),z_2)=0$ so that by combining \eqref{eq: Lips Q1} and \eqref{eq: Lips Q2}, we obtain 
 \begin{equation}\label{eq: Lips Q12}
(z_2-z_1)\leq  \frac{c_1}{c_2}(\varrho_0(z_2)-\varrho_0(z_1))
\end{equation} 
We obtain \eqref{eq: Lip boundary D} by interchanging $z_1$ and $z_2$ in \eqref{eq: Lips Q12}.
\endproof


\section{ Stability of the optimal transports }
\label{sec: Continuity property of the optimal transport}
Let $l>0$, $\sigma\in \mathscr{P}\left(\mathbb{R}^2\right) $ such that $\spt(\sigma)\subset B_l$ and $\theta_0:[0,H]\longrightarrow \calI_0$. We recall that 
\begin{equation}\label{eq:forK0} 
\mathcal{K}[\sigma](\varrho) =\dfrac{1}{2} W^2_2\left(\sigma, {\bf f}\#\mu_{\varrho}\right) +\dfrac{1}{2}\int_{\mathbb{R}^2}\left( f_0(s)-s^2-\phi^2(z)\right)  \mu_{\varrho}(d\p), \qquad \phi(z)=z/\theta_0(z)
\end{equation}
for any   $\varrho\in\calR_0$, the set of all $\varrho$ for  which $\mu_{\varrho}$ is a probability measure.  Here, $\mathbf{f}(s,z)=\left(s, z/\theta_0(z) \right) $  for any $(s,z)\in\calW$.
 As, $\theta_0$ is of values in $\calI_0$, $\phi$ is bounded and
\begin{equation}\label{eq:forK bounded} 
|\mathcal{K}[\sigma]|(\varrho)\leq  KH \max_{\bar{\calW}}|\mathbf{f}|+ 2\left(1/(4r_0^4)+\max_{[0,H]}\phi \right) +l^2=:C_0(K),
\end{equation}
for all $\varrho\in \calR_0$ such that $0\leq 2r_0^2\varrho(z)\leq  2r_0^2 K<1$  for all $z\in[0,H]$.

\begin{lemma}\label{lem: stability K[sigma]}
Let $l>0$ and  $\left\lbrace \sigma_n\right\rbrace_{n=0}^{\infty}\subset\mathscr{P}\left(\mathbb{R}^2\right)$ such that $\spt(\sigma_n)\subset B_l$ for all $n\geq 0$ and $\left\lbrace \varrho_n\right\rbrace_{n=0}^{\infty}\subset\mathcal{R}_0$. Assume that $\left\lbrace \sigma_n\right\rbrace_{n=1}^{\infty}$ converges narrowly to $\sigma_0$ and that $\left\lbrace \varrho_n\right\rbrace_{n=1}^{\infty}$ converges pointwise to $\varrho_0$. Then, $\left\lbrace\mathcal{K}[\sigma_n](\varrho_n)\right\rbrace_{n=1}^{\infty}$ converges to $\mathcal{K}[\sigma_0](\varrho_0)$.
\end{lemma}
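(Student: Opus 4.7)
The plan is to split $\mathcal{K}[\sigma_n](\varrho_n) = \tfrac{1}{2}W_2^2(\sigma_n,\mathbf{f}\#\mu_{\varrho_n}) + \tfrac{1}{2}\int(f_0(s) - s^2 - \phi^2(z))\,\mu_{\varrho_n}(d\p)$ into the Wasserstein piece and the integral piece, and to show convergence of each separately. Both pieces rest on first upgrading the pointwise convergence $\varrho_n \to \varrho_0$ to narrow convergence of the associated measures $\mu_{\varrho_n} \to \mu_{\varrho_0}$.

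For the measure convergence, observe that $\varrho_0 \in \mathcal{R}_0$ implies $\varrho_0(z) < 1/(2r_0^2)$ for a.e.\ $z$, so the graph of $\varrho_0$ has two-dimensional Lebesgue measure zero by Fubini. Hence the densities $\rho_n(s,z) := (2f_0(s)/\Omega^2)^2\,\chi_{D_{\varrho_n}}(s,z)$ converge to $\rho_0$ Lebesgue-a.e. Since each $\mu_{\varrho_n}$ and $\mu_{\varrho_0}$ is a probability measure, Scheff\'e's lemma yields $\|\rho_n - \rho_0\|_{L^1(\calL^2)} \to 0$, i.e.\ total-variation convergence $\mu_{\varrho_n} \to \mu_{\varrho_0}$. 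By continuity of $\mathbf{f}$ the push-forwards $\mathbf{f}\#\mu_{\varrho_n}$ converge narrowly as well, and assumption (A1) ensures $\theta_0 \ge \inf \calI_0 > 0$, so $\mathbf{f}(\calW)$ is bounded; together with $\spt(\sigma_n) \subset B_l$, all the measures in play are supported in a fixed compact set. Narrow convergence of probability measures with a common compact support implies $W_2$-convergence, and the triangle inequality for $W_2$ combined with $|a^2 - b^2| \le (a+b)|a-b|$ then gives $W_2^2(\sigma_n,\mathbf{f}\#\mu_{\varrho_n}) \to W_2^2(\sigma_0,\mathbf{f}\#\mu_{\varrho_0})$.

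For the integral piece, Fubini rewrites $\int(f_0 - s^2 - \phi^2)\,d\mu_{\varrho_n} = \int_0^H G(\varrho_n(z), z)\,dz$ with $G(a,z) := \int_0^a(f_0(s) - s^2 - \phi^2(z))(2f_0(s)/\Omega^2)^2\,ds$. The $s^2$ and $\phi^2$ contributions involve integrands bounded on $\calW$ and pass to the limit by dominated convergence applied to the $L^1$-convergent densities. The $f_0$-contribution, after an explicit integration, equals $\tfrac{r_0^2\Omega^2}{2} + \tfrac{\Omega^2}{2}\int_0^H u_n(z)^2\,dz$ where $u_n(z) := r_0^4\varrho_n(z)/(1-2r_0^2\varrho_n(z))$ is the $z$-marginal of $\rho_n$; Fubini on the Scheff\'e convergence gives $u_n \to u_0$ in $L^1([0,H])$, and combining pointwise convergence of $u_n^2$ with Fatou's lemma yields convergence of this last integral in the extended sense $[0,+\infty]$ when needed.

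The main obstacle is precisely the $f_0$ piece: the integrand blows up as $s \uparrow 1/(2r_0^2)$, and mere pointwise convergence of $\varrho_n$ does not prevent $\varrho_n(z)$ from approaching that threshold. The probability constraint $\int_0^H u_n\,dz = 1$ only provides $L^1$-control, whereas $\int f_0\,d\mu_{\varrho_n}$ depends on $\|u_n\|_{L^2}^2$; upgrading the convergence (or at worst ensuring that both sides of the claimed limit diverge simultaneously) is the delicate step that drives the argument and must be handled with care.
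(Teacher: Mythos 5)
Your reduction is accurate, and up to the last step it is actually more careful than the paper's own two-line proof: the Scheff\'e argument giving $L^1$ (hence narrow) convergence of $\mu_{\varrho_n}$, the compact-support argument for the convergence of the Wasserstein term, and the explicit identity $\int f_0\,d\mu_{\varrho_n}=\tfrac{r_0^2\Omega^2}{2}+\tfrac{\Omega^2}{2}\int_0^H u_n^2\,dz$ with $u_n=r_0^4\varrho_n/(1-2r_0^2\varrho_n)$ are all correct. But the proposal stops exactly where the proof must be closed: for the $f_0$ piece Fatou only gives lower semicontinuity, and ``convergence in the extended sense when needed'' is not a proof of the asserted convergence. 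This is a genuine gap, and under the hypotheses as literally stated it cannot be repaired: take $\sigma_n\equiv\sigma_0$ and let $u_n$ equal $n^2$ on an interval of length $n^{-3}$ sliding towards a point, adjusted to a constant elsewhere so that $\int_0^H u_n\,dz=1$; then $\varrho_n,\varrho_0\in\calR_0$, $\varrho_n\to\varrho_0$ pointwise, but $\int_0^H u_n^2\,dz\to\infty$, so $\calK[\sigma_0](\varrho_n)\to+\infty$ while $\calK[\sigma_0](\varrho_0)$ is finite. So you have correctly located the obstruction, but you have not produced a proof of the lemma, and none exists without an extra ingredient.

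The missing ingredient, relative to how the paper uses the lemma, is a uniform bound keeping $\varrho_n$ away from $1/(2r_0^2)$. The paper's proof is silent on the second term altogether (it only records narrow convergence of $\mathbf{f}\#\mu_{\varrho_n}$ and continuity of $W_2$), but the lemma is invoked only in Proposition \ref{prop: stability transport}, where the $\varrho_n$ are monotone minimizers attached to potentials $P_n\in\calE_A$ and Lemma \ref{lem: twist}(ii) supplies $0\le 2r_0^2\varrho_n(z)\le 2r_0^2M_*<1$ uniformly in $n$ and $z$; this is also the standing normalization behind the bound \eqref{eq:forK bounded} stated just before the lemma. With such an $M_*$, $f_0$ is bounded on $[0,M_*]$, the integrand $f_0(s)-s^2-\phi^2(z)$ is bounded and continuous on the common compact support, and the integral term converges immediately from your $L^1$ convergence of the densities (dominated convergence), so your argument then finishes the proof. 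Alternatively, monotonicity of the $\varrho_n$ together with pointwise convergence at $z=H$ yields the same uniform bound. The correct completion of your write-up is therefore to import (or add as a hypothesis) this uniform bound and conclude by dominated convergence, noting explicitly that pointwise convergence plus the mass constraint alone, which control only $\|u_n\|_{L^1}$ and not $\|u_n\|_{L^2}$, do not suffice.
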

\proof{} As $\mathbf{f}$ is bounded continuous and $\left\lbrace \mu_{\varrho_n}\right\rbrace_{n=1}^{\infty}$ converges narrowly to $\mu_{\varrho_0}$, we have that $\mathbf{f}\#\mu_{\varrho_n}$ is supported in a fixed bounded domain for $n\geq 1$ and $\left\lbrace \mathbf{f}\#\mu_{\varrho_n}\right\rbrace_{n=0}^{\infty}$ converges narrowly to $\mathbf{f}\#\mu_{\varrho_0}$. We then use the continuity of the Wasserstein distance $W_2(\cdot,\cdot)$ to get the result. \endproof

\begin{proposition}\label{prop: stability transport}
Let $c_0\in\mathbb{R}$, $l>0$ and  $\left\lbrace \sigma_n\right\rbrace_{n=0}^{\infty}\subset\mathscr{P}\left(\mathbb{R}^2\right)$ such that $\spt(\sigma_n)\subset B_l$ for all $n\geq 0$.  Let $\left\lbrace(P_n,\Psi_n)\right\rbrace_{n=0}^{\infty}\in\calU_0$ such that $\calJ[\sigma_n](P_n,\Psi_n)\geq c_0$ and let $\left\lbrace \varrho_n\right\rbrace_{n=0}^{\infty}\subset \mathcal{R}_0$ be a sequence of monotone functions such that
\begin{equation}\label{eq: stability equation 0}
\calK[\sigma_n](\varrho_n)+ {\bf m}_2[\sigma_n]=\calJ[\sigma_n](P_n,\Psi_n)
\end{equation}
for all $n \geq 0$. If $\left\lbrace \sigma_n\right\rbrace_{n=1}^{\infty}$ converges narrowly to $\sigma_0$ then the following holds :
\begin{itemize}
\item[(i)] $\left\lbrace \mu_{\varrho_n}\right\rbrace_{n=1}^{\infty}$ converges narrowly to $\mu_{\varrho_0}$. 
\item[(ii)] $\left\lbrace \calT[P_n]\right\rbrace_{n=1}^{\infty}$ converges pointwise to $\calT[P_0]$ Lebesgue almost everywhere.
\item [(iii)]  $\left\lbrace \calS[\Psi_n]\right\rbrace_{n=1}^{\infty}$ converges pointwise to $\calS[\Psi_0]$ Lebesgue almost everywhere.
\end{itemize}
 
\end{proposition}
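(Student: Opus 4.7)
My strategy is to combine the compactness of Lemma \ref{lem: compactness P,Psi} with Helly's theorem for the monotone $\varrho_n$ to extract subsequential limits $(\tilde P,\tilde\Psi)$ and $\tilde\varrho$, then pass to the limit in the duality equality \eqref{eq: stability equation 0} using Lemmas \ref{le:conv Jsigma(P Psi)} and \ref{lem: stability K[sigma]}. This will force $(\tilde P,\tilde\Psi)$ to be a maximizer of $\calJ[\sigma_0]$ and $\tilde\varrho$ to be a minimizer of $\calK[\sigma_0]$, at which point the uniqueness statements in Proposition \ref{change of variables} identify $\tilde\varrho=\varrho_0$ and pin down $\calT[\tilde P]$, $\calS[\tilde\Psi]$. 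A subsequence-of-subsequence argument then promotes the conclusion to the full sequence.

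\textbf{Step for (i).} By Lemma \ref{lem: compactness P,Psi} the family $\{(P_n,\Psi_n)\}$ is precompact in $C(\bar\calW\times\bar\calI_0)\times C(\bar B_l)$ and $\{P_n\}\subset\calE_A$ for some $A>0$. Lemma \ref{lem: twist}(ii) then provides $M_*$ with $2r_0^2 M_*<1$ such that $0\leq\varrho_n(z)\leq M_*$ for every $n$ and $z$. By monotonicity and Helly's selection theorem a subsequence $\varrho_{n_k}$ converges pointwise to some monotone $\tilde\varrho$, while simultaneously $(P_{n_k},\Psi_{n_k})\to(\tilde P,\tilde\Psi)$ uniformly. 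The densities $(2f_0(s)/\Omega^2)^2\chi_{D_{\varrho_{n_k}}}$ are uniformly bounded on $[0,M_*]\times[0,H]$ and converge a.e., so dominated convergence yields $\mu_{\varrho_{n_k}}\to\mu_{\tilde\varrho}$ narrowly. Invoking Lemma \ref{lem: stability K[sigma]} and Lemma \ref{le:conv Jsigma(P Psi)}, together with $\mathbf{m}_2[\sigma_n]\to\mathbf{m}_2[\sigma_0]$ (which follows from narrow convergence and the uniform support bound $\spt(\sigma_n)\subset B_l$), and passing to the limit in \eqref{eq: stability equation 0} gives
\[\calK[\sigma_0](\tilde\varrho)+\mathbf{m}_2[\sigma_0]=\calJ[\sigma_0](\tilde P,\tilde\Psi).\]
Combined with Proposition \ref{prop:optimality cond}(1), this identifies $\tilde\varrho$ as a minimizer of $\calK[\sigma_0]$ over $\calR_0$ and $(\tilde P,\tilde\Psi)$ as a maximizer of $\calJ[\sigma_0]$. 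Proposition \ref{change of variables}(i) then yields $\tilde\varrho=\varrho_0$, and since every subsequence of $\{\varrho_n\}$ admits a further one with the same limit, $\mu_{\varrho_n}\to\mu_{\varrho_0}$ narrowly, establishing (i).

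\textbf{Steps for (ii)--(iii).} The functions $P_n(\cdot,\cdot,m)$ are convex in $(s,z)$, being suprema of the affine family $\q\mapsto c(\cdot,\cdot,m,\q)-\Psi_n(\q)$; similarly $\Psi_n$ is convex in $(\Upsilon,Z)$. Uniform convergence $P_{n_k}\to\tilde P$ on compacts of $\overset{o}{\calW}\times\calI_0$, together with the classical fact that uniform convergence of convex functions implies a.e. convergence of their gradients, gives $\nabla_{(s,z)} P_{n_k}\to\nabla_{(s,z)}\tilde P$ and hence $\calT[P_{n_k}]\to\calT[\tilde P]$ Lebesgue-a.e. on $\calW$; similarly $\calS[\Psi_{n_k}]\to\calS[\tilde\Psi]$ a.e. on $B_l$. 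Proposition \ref{change of variables}(i) shows that both $\calT[\tilde P]$ and $\calT[P_0]$ push $\mu_{\varrho_0}$ forward onto $\sigma_0$. After the bijective change of variable $\mathbf{f}$, this reduces to an optimal transport with quadratic cost from the absolutely continuous measure $\mathbf{f}\#\mu_{\varrho_0}$ to $\sigma_0$, and Brenier's uniqueness theorem yields $\calT[\tilde P]=\calT[P_0]$ $\mu_{\varrho_0}$-a.e., which is Lebesgue-a.e. on $D_{\varrho_0}$ because the density of $\mu_{\varrho_0}$ is bounded away from zero there. The c-subdifferential identity $\calS[\tilde\Psi]\circ\calT[\tilde P]=\id$ from Lemma \ref{le:sudiff structure}, combined with the analogous identity for $(P_0,\Psi_0)$ supplied by Proposition \ref{change of variables}, furnishes the corresponding statement for $\calS$. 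The full-sequence convergence is again deduced from the subsequence-of-subsequence principle.

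\textbf{Main obstacle.} The crux is the uniqueness $\calT[\tilde P]=\calT[P_0]$: different subsequences could a priori produce different transport maps. The resolution rests on the absolute continuity of $\mu_{\varrho_0}$ (guaranteed by its explicit density) and its persistence under the diffeomorphism $\mathbf{f}$, which makes the optimal plan between $\mathbf{f}\#\mu_{\varrho_0}$ and $\sigma_0$ unique regardless of whether $\sigma_0$ is itself absolutely continuous. A secondary technical point is that $P_n$ is not a priori jointly convex in the $m$ variable, but convexity in $(s,z)$ alone suffices for the gradient-convergence step that defines $\calT[P_n]$, since $m=\theta_0(z)$ is tied to $z$ in the final formula.
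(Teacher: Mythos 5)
Your treatment of (i) is essentially the paper's own argument (Helly's theorem plus Lemmas \ref{lem: compactness P,Psi}, \ref{le:conv Jsigma(P Psi)}, \ref{lem: stability K[sigma]}, the equality case of Proposition \ref{prop:optimality cond}, and uniqueness of the minimizer from Proposition \ref{change of variables}); it is in fact stated a little more carefully than in the paper, since you keep the subsequential limit $(\tilde P,\tilde\Psi)$ distinct from $(P_0,\Psi_0)$.

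The genuine gap is in your identification step for (ii)--(iii). The paper argues with $(P_0,\Psi_0)$ itself as the uniform limit of $(P_n,\Psi_n)$ (this identification is only implicit there, but the whole argument is run with it) and then uses stability of the $c$-subdifferential: at any point where all $P_n$, $n\geq 0$, are differentiable, the points $\q_n=\calT[P_n](\p_0)$ subconverge in $\bar B_l$, the limit satisfies $P_0+\Psi_0=c$, and differentiability of $P_0$ forces the limit to equal $\calT[P_0](\p_0)$; this yields convergence Lebesgue-a.e.\ on all of $\calW$, and similarly for $\calS[\Psi_n]$ on $B_l$. You instead identify only the transport maps of the subsequential limit: uniqueness of the optimal plan between the absolutely continuous measure $\mathbf{f}\#\mu_{\varrho_0}$ and $\sigma_0$ gives $\calT[\tilde P]=\calT[P_0]$ only $\mu_{\varrho_0}$-a.e., i.e.\ Lebesgue-a.e.\ on $D_{\varrho_0}$ and nowhere else, so your conclusion (ii) is weaker than the statement. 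For (iii) the argument actually fails: the identities you invoke ($\calS[\tilde\Psi]\circ\calT[\tilde P]=\id$ $\mu_{\varrho_0}$-a.e., $\calS[\Psi_0]\#\sigma_0=\mu_{\varrho_0}$, $\calT[P_0]\circ\calS[\Psi_0]=\id$ $\sigma_0$-a.e.) are supplied by Lemma \ref{le:sudiff structure} and Proposition \ref{change of variables} only under the extra hypothesis that $\sigma_0$ is absolutely continuous, which is not assumed in this proposition; and even granting them, they would identify $\calS[\tilde\Psi]$ with $\calS[\Psi_0]$ only $\sigma_0$-a.e., which gives no Lebesgue-a.e.\ information on $B_l$ when $\sigma_0$ is singular or does not have full support. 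What is missing is an identification of the limit potentials themselves (up to an additive constant) with $(P_0,\Psi_0)$ --- exactly the ingredient the paper uses when it asserts uniform convergence of $P_n\to P_0$ and $\Psi_n\to\Psi_0$; once that is in place, your convex-gradient-convergence argument (or the paper's subdifferential stability, which avoids quoting that theorem) delivers (ii) and (iii) Lebesgue-a.e.\ as claimed. Plan uniqueness pins the maps down only on the mass of $\mu_{\varrho_0}$ and $\sigma_0$, and that is strictly less than what the proposition asserts.
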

\proof{} 1. Lemma \ref{lem: compactness P,Psi} ensures that $P_n\in \calE_A$ for all $n\geq 0$ for some $A>0$. Using lemma \ref{lem: twist}(ii), there exists $M_*>0$ such that 
\begin{equation}
0\leq 2r_0^2\varrho_n(z)\leq 2r_0^2M_*<1.
\end{equation}
for all $n\geq 0$ and $z\in[0, H]$.  In light of Helly's theorem, we assume that $\left\lbrace \varrho_n\right\rbrace_{n=1}^{\infty}$ converges to some monotone function $\bar\varrho_0$. As a result, it is straightforward that $\left\lbrace \mu_{\varrho_n}\right\rbrace_{n=1}^{\infty}$ converges weakly$^*$ to $\mu_{\bar\varrho_0}$. Note that
$$  \int_{\mathbb{R}^2}f_0(s)\mu_{\varrho_n}(d\p)+\dfrac{1}{2}W_2^2(\sigma_n, {\bf f}\#\mu_{\varrho_n})=\calK[\sigma_n](\varrho_n)+\int_{\mathbb{R}^2}s^2+\phi^2(z)\;\mu_{\varrho_n}(d\p).$$
And so, 
$$ \int_{\mathbb{R}^2}f_0(s)\mu_{\varrho_n}(d\p)\leq \calK[\sigma_n](\varrho_n)+\int_{\mathbb{R}^2}s^2+\phi^2(z)\mu_{\varrho_n}(d\p)\leq C_0(M_*) +1/(4r_0^4)+\max_{[0,H]}\phi. $$
Thus,   $\left\lbrace \mu_{\varrho_n}\right\rbrace_{n=1}^{\infty}$ is tight and without loss of generality, we assume that  $\left\lbrace \mu_{\varrho_n}\right\rbrace_{n=1}^{\infty}$ converges narrowly to $\mu_{\bar\varrho_0}$. We next show that $\varrho_0=\bar\varrho_0\quad\calL^1-a.e$. In light of lemma \ref{le:conv Jsigma(P Psi)} and lemma \ref{lem: stability K[sigma]}, \eqref{eq: stability equation 0} becomes in the limit:
\begin{equation}\label{eq: stability equation 1}
\calK[\sigma_0](\bar\varrho_0)+ {\bf m}_2[\sigma_0]=\calJ[\sigma_0](P_0,\Psi_0).
\end{equation}
In view of lemma \ref{prop:optimality cond}, the equality in \eqref{eq: stability equation 1} implies that $\bar\varrho_0$ is a minimizer of $\calK[\sigma_0]$. The uniqueness result established in proposision \ref{change of variables} thus guarantees that $\bar\varrho_0=\varrho_0\quad\calL^1-a.e$.  The reasoning above applies to any subsequence of $\left\lbrace \mu_{\varrho_n}\right\rbrace^{\infty}_{n=1}$. As the limit is unique, we conclude that (i) holds. \\
2. Let $\p_0=(s_0,z_0)$ be a point of $\calW$ such that $P_n$ is differentiable  at $(\p_0,\theta(z_0))$  for $n\geq 0$. Let $\q_n\in\partial P_n(\p_0,\theta_0(z_0))$. As $(\p_0,\theta(z_0))$ is a point of differentiability of $P_n$, we have  $\q_n=\calT[P_n](\p_0)$ by lemma \ref{le:sudiff structure}(ii). Since $\partial P_n(\p_0,\theta_0(z_0))\subset \bar B_l$,  up to a subsequence, $\left\lbrace \q_n\right\rbrace^{\infty}_{n=1}$ converges to some $\q_0\in \bar{B}_l$. By definition of $\partial P_n(\p_0,\theta_0(z_0))$, we have $P_n(\p_0,\theta_0(z_0))+\Psi_n(\q_n)=c(\p_0,m,\q_n)$. The continuity of $\Psi_n$ and $c$ and uniform convergence of  $\left\lbrace P_n\right\rbrace^{\infty}_{n=1}$  and  $\left\lbrace \Psi_n\right\rbrace^{\infty}_{n=1}$ yield  $P_0(\p_0,\theta_0(z_0))+\Psi_0(\q_0)=c(\p_0,m,\q_0)$. Thus, $\q_0=\calT[P_0](\p_0)$, which is independent of  subsequences of $\left\lbrace \q_n\right\rbrace^{\infty}_{n=1}$. As $P_n, \; n\geq 0$  is differentiable almost everywhere, (ii) holds. (iii) holds by similar arguments. \endproof


\section{Existence of solutions for  Continuity equations associated with the Axisymmetric  Model}
\label{sec:Continuity equation for the Toy Model}
In section \ref{sec: Derivation of the continuity equation}, we identified a class of continuity equations which yield solutions to the axisymmetric flows provided the velocity field associated with this of continuity equations is smooth enough. In this section, we construct solutions to such continuity equations. We point out, however, that the solution constructed are not smooth enough to generate a solution to the axisymmetric flow.

Assume (A1) holds and let $T>0$. For any $\Psi:\calV\longrightarrow \mathbb{R}$ convex such that $\nabla\Psi(\q)\in \calW$ a.e,  for all $\q\in\calV$ and $t\in[0,T]$ we associate the velocity field 
\begin{equation}\label{eq: velocity field in dual space} 
  V_t[\Psi]= {\textstyle \left(\;2\sqrt{\Upsilon} F_{0t}\left( \frac{1}{\Omega}\sqrt{2f_0(\frac{\partial \Psi}{\partial \Upsilon})},\phi^{-1}\left( \frac{\partial\Psi}{\partial Z}\right)  \right)  ,\; g F_{1t}\left( \frac{1}{\Omega}\sqrt{2f_0(\frac{\partial \Psi}{\partial \Upsilon})},\phi^{-1}\left( \frac{\partial\Psi}{\partial Z}\right) \right)  \right) }\quad\hbox{where}\quad \phi(z)=z/\theta_0(z).
 \end{equation}
 Let $l>0$. Under condition (B1), 
 \begin{equation}\label{eq: velocity bound} 
 |V_t[\Psi](\q)|\leq M\sqrt{4l+1}=:C_0(l)
\end{equation}
 for all $t\in[0,T]$ and $\q\in B_l$.
\begin{lemma}\label{lem: convergence in distribution when measures abs continuous} 
Assume conditions (A1), (B1), (B2) and (B3) hold. Let $l>0$ and $\Psi:\calV\longrightarrow \mathbb{R}$ convex such that $\nabla\Psi(\q)\in \calW$ a.e for all $\q\in\calV$. There exists a sequence of convex smooth functions  $\left\lbrace \Psi_{n}\right\rbrace_{n\geq 1}$ $\Psi_n: B_l\longrightarrow \mathbb{R}$ such that $\div\left( V_t[\Psi_{n}]\right)\geq 0 $ and  $\nabla\Psi_n(\q)\in \calW$ a.e for all $\q\in B_l$ for all $n \geq 1$ and $\left\lbrace  V_t[\Psi_{n}]\right\rbrace_{n\geq 1}$ converges to $  V_t[\Psi]$ almost everywhere with respect to the Lebesgue measure.
\end{lemma}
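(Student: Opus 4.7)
The plan is to produce $\Psi_n$ by standard mollification applied to a convex extension of $\Psi$ that preserves the gradient constraint. Since $\nabla\Psi$ takes values in the bounded set $\calW$ almost everywhere, $\Psi$ is Lipschitz on $\calV$; I would then extend it to all of $\mathbb R^2$ as the supremum of all affine minorants whose slopes lie in $\calW$,
\begin{equation*}
\hat\Psi(\q) := \sup\bigl\{\Psi(\q_0) + \p_0\cdot(\q-\q_0) : \q_0 \in \calV,\ \p_0 \in \partial\Psi(\q_0)\cap \calW\bigr\}.
\end{equation*}
As a supremum of affine functions with slopes in the convex bounded set $\calW$, $\hat\Psi$ is convex and finite on bounded sets, and it satisfies $\hat\Psi|_{\calV} = \Psi$ because $\Psi$ already admits such subgradients at a.e. point. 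Convolving with a standard non-negative radial mollifier $\eta_{1/n}$, set $\Psi_n := \hat\Psi * \eta_{1/n}$. Then $\Psi_n$ is smooth and convex, and $\nabla\Psi_n = (\nabla\hat\Psi) * \eta_{1/n} \in \overline{\calW}$; the strict inequality $\partial_\Upsilon \Psi_n(\q) < 1/(2r_0^2)$ at every point follows from $\partial_\Upsilon \hat\Psi < 1/(2r_0^2)$ on a set of full measure together with the strict positivity of $\eta_{1/n}$ on its support. Hence $\nabla \Psi_n \in \calW$ pointwise.

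The divergence condition is a direct chain-rule computation. Writing $V_t[\Psi_n] = (V^1_t, V^2_t)$ and using (B2) ($\partial_z F_0 = \partial_r F_1 = 0$) to eliminate the cross derivatives, one obtains
\begin{equation*}
\div V_t[\Psi_n] = \dfrac{F_0}{\sqrt{\Upsilon}} + \dfrac{2\sqrt{\Upsilon}\, \partial_r F_0 \cdot f_0'(\partial_\Upsilon \Psi_n)}{\Omega\sqrt{2 f_0(\partial_\Upsilon \Psi_n)}}\, \partial^2_{\Upsilon\Upsilon}\Psi_n + g\,\partial_z F_1\cdot(\phi^{-1})'(\partial_Z \Psi_n)\, \partial^2_{ZZ}\Psi_n.
\end{equation*}
Each summand is non-negative: $F_0 \geq 0$ and $\partial_r F_0, \partial_z F_1 > 0$ by (B1) and (B3); $f_0' > 0$ since $f_0$ is monotone increasing on $[0,1/(2r_0^2))$; $(\phi^{-1})' > 0$ because (A1) makes $\phi$ strictly increasing; and convexity of $\Psi_n$ forces $\partial^2_{\Upsilon\Upsilon}\Psi_n, \partial^2_{ZZ}\Psi_n \geq 0$. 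Hence $\div V_t[\Psi_n] \geq 0$.

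At every Lebesgue point of $\nabla\hat\Psi$ one has $(\nabla\hat\Psi)*\eta_{1/n} \to \nabla\hat\Psi$, so $\nabla\Psi_n \to \nabla\Psi$ almost everywhere on $\calV$; combining with the continuity of the map $\nabla\Psi \mapsto V_t[\Psi]$ on the open set $\{\partial_\Upsilon\Psi < 1/(2r_0^2)\}$ (which carries full measure) gives $V_t[\Psi_n] \to V_t[\Psi]$ a.e., as required. The main obstacle I anticipate is the extension step: one must keep $\hat\Psi$ finite while ensuring its gradient lies in the \emph{half-open} set $\calW$ rather than $\overline{\calW}$, since $\partial_\Upsilon = 1/(2r_0^2)$ is a singular boundary for $f_0$ and hence for the velocity formula. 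The affine-minorant construction resolves this in one stroke, because every admissible affine function has slope strictly in the open slab in the $\Upsilon$-direction, and this strict admissibility is preserved by convolution against a strictly positive kernel.
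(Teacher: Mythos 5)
Your proof is correct and follows essentially the same route as the paper: mollify the convex potential, note that (B2) kills the cross-derivative terms so that $\div V_t[\Psi_n]$ reduces to the three terms you display (your formula is the right one; the paper's version \eqref{eq: div} keeps the cross terms, which vanish under (B2), and has a harmless coefficient slip), conclude non-negativity from (B1), (B3), (A1), the monotonicity of $f_0$ and the convexity of $\Psi_n$, and obtain $V_t[\Psi_n]\to V_t[\Psi]$ a.e.\ from a.e.\ convergence of the mollified gradients together with continuity of the map sending $\nabla\Psi$ to the velocity on $\{\partial_{\Upsilon}\Psi<1/(2r_0^2)\}$. The only substantive difference is your preliminary extension $\hat\Psi$: the paper simply mollifies on inner domains $\calV_n=\{\q\in\calV:\ \mathrm{dist}(\q,\partial\calV)>1/n\}$ and never explains how $\Psi_n$ is defined on all of $B_l$ nor why $\nabla\Psi_n\in\calW$, so your construction is, if anything, more careful. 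One caution inside that extra step: the claim that $\partial_{\Upsilon}\hat\Psi<1/(2r_0^2)$ on a set of full measure is automatic only on $\calV$ (where $\hat\Psi=\Psi$); outside $\calV$ the inference ``every admissible affine minorant has $\Upsilon$-slope strictly below $1/(2r_0^2)$, hence so does their supremum'' is not valid in general, since a supremum of affine functions with slopes strictly below a level can have derivative equal to that level. This does not damage the argument: the a.e.\ convergence and the eventual application only see $\calV$, and strictness everywhere can be enforced by the harmless rescaling $\Psi_n:=(1-1/n)\,\hat\Psi*\eta_{1/n}$, which preserves convexity, smoothness, the gradient constraint, the sign of the divergence, and the a.e.\ convergence.
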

\proof{}
Since $\Psi$ be a convex function, $\Psi$ is locally Lipschitz and thus differentiable almost everywhere with respect to Lebesgue. Let $j$ be a smooth probability density contained with support contained in the unit ball.  We consider the functions  $\Psi_n: \calV_n\longrightarrow \mathbb{R}$ defined by $\Psi_n= j_n*\Psi$, with $ j_n= \frac{1}{n^2}j(\frac{\cdot}{n}) $ and $\calV_n=\left\lbrace \q\in \calV : dist(\q, \partial \calV) \right\rbrace $.  It follows that $\left\lbrace \nabla\Psi_n\right\rbrace_{n=1}^{\infty} $ converges to $\nabla\Psi$  in $L_{loc}^1(\calV)$. Thus, there exists a subsequence of $\left\lbrace \nabla\Psi_n\right\rbrace_{n=1}^{\infty} $ denoted again by 
$\left\lbrace \nabla\Psi_n\right\rbrace_{n=1}^{\infty} $ that  converges to $\nabla\Psi $ almost everywhere with respect to the Lebesgue measure. As $\phi^{-1}$ is continuous, $\left\lbrace\phi^{-1}\left( \frac{\partial\Psi_n}{\partial Z} \right)\right\rbrace_{n=1}^{\infty} $ converges to $\left\lbrace\phi^{-1}\left( \frac{\partial\Psi}{\partial Z} \right)\right\rbrace$ almost everywhere with respect to the Lebesgue measure. 
As a consequence, $\left\lbrace  V_t[\Psi_{n}]\right\rbrace_{n\geq 1}$ converges to $  V_t[\Psi]$ almost everywhere with respect to the Lebesgue measure. As $\Psi_n$ is smooth, we have
\begin{equation}\label{eq: div}
\begin{aligned}
\div[V_t[\Psi_n]]=\frac{1}{\sqrt{\Upsilon}}F_0 &+\frac{1}{\sqrt{2}\Omega}\partial_{r}F_0\frac{f'_0}{2\sqrt{f_0}}\partial^2_{\Upsilon\Upsilon}\Psi_n\\
& +2\sqrt{\Upsilon}\partial_{z}F_0\frac{1}{\phi'\circ\phi^{-1}}\partial_{Z\Upsilon}^2\Psi_n+\frac{g}{\sqrt{2}\Omega}\partial_{r}F_1\frac{f'_0}{2\sqrt{f_0}}\partial^2_{\Upsilon Z}\Psi_n +\partial_{z}F_1\frac{g}{\phi'\circ\phi^{-1}}\partial_{ZZ}^2\Psi_n.
\end{aligned}
\end{equation}
Here, for simplicity, we make the following identifications: 
$$F_i\equiv F_{it}\left( \frac{1}{\Omega}\sqrt{2f_0(\frac{\partial \Psi}{\partial \Upsilon})},\phi^{-1}\left( \frac{\partial\Psi}{\partial Z}\right)  \right),\qquad f_0\equiv  f_0(\frac{\partial \Psi}{\partial \Upsilon}),\qquad 
\phi^{-1} \equiv \phi^{-1}\left( \frac{\partial\Psi}{\partial Z}\right)  $$
$$\partial_{r}F_i\equiv \partial_{r} F_{it}\left( \frac{1}{\Omega}\sqrt{2f_0(\frac{\partial \Psi}{\partial \Upsilon})},\phi^{-1}\left( \frac{\partial\Psi}{\partial Z}\right)  \right),\qquad f_0^{'}\equiv  f_0^{'}(\frac{\partial \Psi}{\partial \Upsilon}),\qquad \partial_{z}F_i\equiv \partial_{z} F_{it}\left( \frac{1}{\Omega}\sqrt{2f_0(\frac{\partial \Psi}{\partial \Upsilon})},\phi^{-1}\left( \frac{\partial\Psi}{\partial Z}\right)  \right) $$
 $i=0,1$. In light of  the convexiy of $\Psi_n$ and conditions (A1), (B1), (B2) and (B3), the equation \eqref{eq: div} implies that $\div\ [V_t[\Psi_n]] \geq 0$. \endproof
\begin{lemma}\label{lem: Cont.Eq time step} 
Let $l_0>0$, $\tau>0$. $C_0$ is as defined in \eqref{eq: velocity bound}. Assume conditions (A1), (A2), (B1), (B2) and (B3). Let $t_0 >0$ and $\sigma_{t_0}\in \mathscr{P}^{ac}\left( \mathbb{R}^2\right) $ such that $\spt(\sigma_{t_0})\subset B^+_{l_0}$. Let $\Psi$ a convex function on $\calV$ such that $\nabla\Psi(\q)\in \calW$ $\calL^2$-a.e,  for all $\q\in\calV$. Then, there exists $\sigma_t\in \mathscr{P}^{ac}\left( \mathbb{R}^2\right) $ such that $\spt(\sigma_{t})\subset B^+_{l_t}$ with $l_t\leq l_0+ C_0(l_0)(t-t_0)$ for  $t\in[t_0,t_0+\tau) $ satisfying :\\
\begin{itemize}
\item[(a)] $\int_{\mathbb{R}^2}\left(  \frac{\partial \sigma_t}{\partial \calL^2}\right)^r d\q\leq \int_{\mathbb{R}^2} \left(  \frac{\partial \sigma_{t_0}}{\partial \calL^2}\right)^{r}d\q $  for any  $ r\geq 1 $ and $t\in [t_0, t_0+\tau)$.\\
\item[(b)] $t\longmapsto\sigma_t\in AC_{1}\left( t_0, t_0+\tau ;\mathscr{P}( \mathbb{R}^2)\right) $ and 
\begin{equation}\label{eq:cont.Eq0} 
\begin{cases}
 \frac{\partial\sigma}{\partial t}+ \div (\sigma V_t[\Psi])=0, \qquad  \mathcal{D}'\left( (t_0,\;  t_0+\tau)\times\mathbb{R}^2 \right) \\
\sigma_{|t=t_0}=\bar\sigma_{t_0}.
\end{cases}
\end{equation}

\item[(c)] $t\longmapsto\sigma_t$ is Lipschitz continuous with respect to the $1-$Wasserstein distance and satisfies
\begin{equation}\label{eq :cont. Eq.} 
W_1\left( \sigma_t, \sigma_{\bar t}\right) \leq C_0(l_0) |t-\bar{t}|
\end{equation}
for all $t_0\leq \bar{t}, t\leq t_0+\tau_0$.
\end{itemize}
\end{lemma}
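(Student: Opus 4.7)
The plan is to approximate $\Psi$ by the smooth convex $\Psi_n$ supplied by Lemma~\ref{lem: convergence in distribution when measures abs continuous}, solve the continuity equation for each $\Psi_n$ by the method of characteristics, extract bounds uniform in $n$, and pass to the limit. The sign condition $\div V_t[\Psi_n]\geq 0$ from the approximation lemma is what will drive all the quantitative estimates.

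\textbf{Setup and characteristics.} Fix a radius $l$ large enough that the moving supports constructed below stay inside $B_l$. Lemma~\ref{lem: convergence in distribution when measures abs continuous} provides smooth convex $\Psi_n$ on $B_l$ with $\nabla\Psi_n(\q)\in\calW$ a.e., $\div V_t[\Psi_n]\geq 0$, and $V_t[\Psi_n]\to V_t[\Psi]$ a.e. Since each $V_t[\Psi_n]$ is smooth and bounded on bounded sets by \eqref{eq: velocity bound}, it admits a unique classical flow $\X_t^n$ on $[t_0,t_0+\tau)$ with $\X_{t_0}^n=\id$; set $\sigma_t^n:=\X_t^n\#\sigma_{t_0}$.

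\textbf{Uniform estimates.} An ODE comparison applied to $t\mapsto|\X_t^n(\q)|$ with the bound $|V_t[\Psi_n](\y)|\leq M\sqrt{4|\y|+1}$ shows that $\spt(\sigma_t^n)\subset B_{l_t}^+$ with $l_t\leq l_0+C_0(l_0)(t-t_0)$ on some $[t_0,t_0+\tau)$ depending only on $l_0$ and $M$. Because $\div V_t[\Psi_n]\geq 0$, Liouville's identity $\partial_t J_n=(\div V_t[\Psi_n])\circ\X_t^n\cdot J_n$ for the Jacobian $J_n(t,\q):=\det\nabla_\q\X_t^n(\q)$ gives $J_n\geq 1$. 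Writing $d\sigma_t^n/d\calL^2=\bigl(d\sigma_{t_0}/d\calL^2\bigr)\circ(\X_t^n)^{-1}/J_n\circ(\X_t^n)^{-1}$ and changing variables yields
\[
\int\Bigl(\frac{d\sigma_t^n}{d\calL^2}\Bigr)^{r}\!d\y=\int\Bigl(\frac{d\sigma_{t_0}}{d\calL^2}\Bigr)^{r} J_n^{1-r}\,d\q\leq \int\Bigl(\frac{d\sigma_{t_0}}{d\calL^2}\Bigr)^{r}\!d\q,
\]
which is (a) at the level of the approximants. Estimate (c) follows from
\[
W_1(\sigma_t^n,\sigma_s^n)\leq \int_{\mathbb{R}^2}|\X_t^n(\q)-\X_s^n(\q)|\,\sigma_{t_0}(d\q)\leq C_0(l_0)|t-s|.
\]

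\textbf{Passage to the limit.} By (c) and the uniform compact support, $\{t\mapsto\sigma_t^n\}$ is equi-Lipschitz in $W_1$ and tight; a diagonal Arzel\`a--Ascoli argument in the narrow topology extracts a subsequence converging narrowly for each $t$ to some $\sigma_t\in AC_1(t_0,t_0+\tau;\mathscr{P}(\mathbb{R}^2))$. Lower semicontinuity of the $L^r$ norm under narrow convergence of absolutely continuous measures transfers (a) to $\sigma_t$, and (c) passes directly. To obtain (b), test against $\psi\in C_c^\infty((t_0,t_0+\tau)\times\mathbb{R}^2)$: the identity
\[
\int_{t_0}^{t_0+\tau}\!\!\int\bigl(\partial_t\psi+\langle\nabla\psi,V_t[\Psi_n]\rangle\bigr)d\sigma_t^n\,dt=0
\]
holds for each $n$; the $\partial_t\psi$ term converges by narrow convergence, and the flux is split as
\[
\int\!\!\int\langle\nabla\psi,V_t[\Psi_n]-V_t[\Psi]\rangle d\sigma_t^n\,dt+\int\!\!\int\langle\nabla\psi,V_t[\Psi]\rangle d\sigma_t^n\,dt,
\]
where the second piece converges after a standard truncation of $V_t[\Psi]$ to its action on the common compact support, and the first vanishes by Vitali's theorem using the a.e.\ convergence $V_t[\Psi_n]\to V_t[\Psi]$ combined with the $n$-uniform $L^r$ bound on $d\sigma_t^n/d\calL^2$ from (a).

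\textbf{Main obstacle.} The delicate step is this last passage to the limit: the velocities $V_t[\Psi_n]$ converge only almost everywhere and involve the nonsmooth composition $\phi^{-1}(\partial_Z\Psi)$ and the square root $\sqrt{f_0(\partial_\Upsilon\Psi)}$, so there is no pointwise or uniform-in-$\q$ control. It is precisely the uniform $L^r$ bound (a), itself a consequence of the sign condition $\div V_t[\Psi_n]\geq 0$ provided by Lemma~\ref{lem: convergence in distribution when measures abs continuous}, that rules out concentration of mass along the approximation and renders Vitali applicable; the absolute continuity of $\sigma_{t_0}$ is essential here.
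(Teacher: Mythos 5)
Your proposal follows essentially the same route as the paper: approximate $\Psi$ by the smooth convex $\Psi_n$ of Lemma \ref{lem: convergence in distribution when measures abs continuous}, push $\sigma_{t_0}$ forward by the flows of $V_t[\Psi_n]$, use $\div V_t[\Psi_n]\geq 0$ to get the Jacobian bound and hence the $L^r$ estimate (a), derive the support growth and the $W_1$-Lipschitz bound (c), and pass to the narrow limit using the a.e.\ convergence of the velocities together with the uniform integrability coming from (a). The only difference is cosmetic: you spell out the flux limit via a Vitali-type argument where the paper invokes Theorem 8.3.1 of \cite{ags:book} and ``standard convergence results,'' so the proof is correct and matches the paper's argument.
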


\proof{}
By lemma \ref{lem: convergence in distribution when measures abs continuous},  there exist a sequence $\left\lbrace \Psi_n    
\right\rbrace_{n=1}^{\infty}$ such that $\div(V_t[\Psi_n]) \geq 0$ and $ V_t[\Psi_n]$ converges to $V_t[\Psi]$ $\calL^2$-a.e. For each $n$ fixed, let $w^n_t$ be the flow associated to the vector field $V_t[\Psi_n]$ defined by $\dot{w}^n_t=V_t[\Psi_n](w^n)$ and $w_{t_0}^n(t_0)=\id$. Then, $\sigma^n_t=w^n_t\#\bar{\sigma}_{t_0}$ solves $\eqref{eq:cont.Eq0}$ when $\Psi$  is replaced by  $\Psi_n$.  Since $\div(V_t[\Psi_n]) \geq 0$, we have $\det\left( \nabla w^n\right)\geq \det\left( \nabla w^n_{t_0}\right)=1$ for any  $n\geq 1$, and $t\in [t_0, t_0+\tau)$. It  follows that

\begin{equation}\label{eq:L^r} 
\int_{\mathbb{R}^2}\left(  \frac{\partial \sigma^n}{\partial \calL^2}\right)^r d\q=\int_{\mathbb{R}^2}\left(\dfrac{ \frac{\partial \sigma_{t_0}}{\partial \calL^2}}{\det\left( \nabla w^n\right)}\right)^{r}\circ( w^{n})^{-1}d\q \leq \int_{\mathbb{R}^2} \left(  \frac{\partial \sigma_{t_0}}{\partial \calL^2}\right)^{r}d\q
\end{equation}
for any  $n\geq 1$, $ r\geq 1 $ and $t\in [t_0,\; t_0+\tau)$. This ensures that (a) holds for $\sigma^n$. In view of \eqref{eq: velocity bound}, we have 
$$\dfrac{d}{dt}|w^n|= \dot{w}_t^n\cdot\frac{w_t^n}{|w|}= V_t[\Psi_n](w^n)\cdot\frac{w_t^n}{|w^n_t|}\leq |V_t[\Psi_n]|\leq C_0(l_0).$$
Therefore, 
$$|w_t^n(\q)| \leq |\q|+ C_0(l_0)(t-t_0)$$
for all $t\in [t_0, t_0+\tau)$ and $\q\in B_{l_0}^+$. It follows that $w_t^n(B_{l_0}^+)\subset B_{l_{t}}^+ $ where $l_{t}\leq l_0+ C_0(l_0)(t-t_0)$. As  $\spt(\sigma_{t_0}) \subset B_{l_0}^+$ and $w^n_t$ is continuous, we have that $\spt(\sigma_{t}) \subset B_{l_t}^+$ 
 for all $t\in[t_0, t_0+\tau) $. By  [Theorem 8.3.1,\;\cite{ags:book}],
\begin{equation}\label{eq:lip W1} 
 W_1(\sigma^n_t, \sigma^n_{\bar{t}})\leq \int_{\bar t}^t ||V_t[\Psi_n]_{r}||_{L^1(\sigma_r )}dr \leq C_0(l_0) (t-\bar{t}) \qquad  \text{for all } t_0\leq \bar{t} \leq t \leq t_0+\tau.
\end{equation}
Consequently,  $t\longrightarrow\sigma^n_t$ is $C_0(l_0)$-Lipschitz continuous on $[t_0,\;t_0+\tau)$ for all $n\geq 1$. Thus,
\begin{equation}\label{eq: unif bounded  W_1} 
   W_1(\bar\sigma^n_{t_0}, \sigma^n_t)\leq C_0(l_0)(t-t_0)\leq C_0(l_0)\tau
  \end{equation}
for all $t\in [t_0,t_0+\tau]$ $n\geq 1$ . We conclude that $\left\lbrace \sigma^n_t\right\rbrace_{n=1}^{\infty}$ is uniformly  bounded in the $1-$Wasserstein space. It follows from the $C_0(l_0)-$Lipschitz continuity and uniform boundness of $\left\lbrace \sigma^n_t\right\rbrace_{n=1}^{\infty}$ that there exists a subsequence of $\left\lbrace \sigma^n\right\rbrace_{n=1}^{\infty}$ still denoted $\left\lbrace \sigma^n_t\right\rbrace _{n}$ ($n$ is independent of $t$) such that $\left\lbrace \sigma^n_t\right\rbrace_{n=1}^{\infty}$ converges narrowly to some $\sigma_t$. In light of \eqref{eq:L^r}, the Dunford Pettis  theorem ensures that $\sigma_t$ is absolutely continuous with respect to Lebesgue and the weak lower semicontinuity of the $L^r-$ norm establishes (a). We note that,  in view of \eqref{eq:lip W1}, (c) is guaranteed by the lower semicontinuity of the Wasserstein distance with respect to the narrow convergence.\\
As $\left\lbrace \sigma^n\right\rbrace_{n=1}^{\infty}$ converges narrowly to $\sigma_t$ ,  $ \left\lbrace V_t[\Psi_n]\right\rbrace_{n=1}^{\infty}$ converges to $V_t[\Psi] $ a.e and $ \left\lbrace V_t[\Psi_n]\right\rbrace_{n=1}^{\infty}$ is bounded. These, combined with the fact that $\left\lbrace \sigma^n_t\right\rbrace_{n=1}^{\infty}$ satisfies (a), yield that  $ V_t[\Psi_n]\sigma^n_t$ converges to $ V_t[\Psi]\sigma_t $ in the sense of distributions fot $t$ fixed by standard convergence results.\endproof

\begin{thm}
 Assume the conditions (A1), (A2), (B1), (B2) and (B3) hold. Let $l>0$, $l_0>0$ and $T>0$ such that
 $e^{4MT}(4l_0+1)<l+1$. Let $c$ be as defined in \eqref{eq: cost function}. Let $\sigma_{0}\in\mathscr{P}^{ac}\left( \mathbb{R}^2\right)$ with $\spt(\sigma_{0})\subset B^+_{l_0}$. Let $\Psi_0\in C(B_{l_0}) $ with and $\varrho_0$ a monotone function such that  $(\Psi_0^c)_c= \Psi_0$ and $((\Psi_0^c),\Psi_0, \varrho_0)$ solves \eqref{eq: equiv problem}. Then, there exist $\left\lbrace \sigma_t\right\rbrace_{t\in[0,\;T]} \subset\mathscr{P}^{ac}\left( \mathbb{R}^2\right)$ with $\spt (\sigma_{t})\subset B_{l}$, $\left\lbrace \Psi_t\right\rbrace_{t\in(0,\;T]} \subset C\left( \mathbb{R}^2\right)$ with  $(\Psi_t^c)_c= \Psi_t$ and a sequence of monotone functions $\left\lbrace \varrho_t\right\rbrace_{t\in(0,\;T]}$ such that $((\Psi_t^c),\Psi_t, \varrho_t)$ solves \eqref{eq: equiv problem} for each $t\in[0,T]$. Moreover, $t\longmapsto\sigma_t$ is Lipschitz continuous on $[0,\;T]$, belongs to $AC_{1}\left( 0, T ;\mathscr{P}( \mathbb{R}^2)\right)$  and satisfies 
\begin{equation}\label{eq:cont.Eq1} 
\begin{cases}
 \frac{\partial\sigma}{\partial t}+ \div (\sigma V_t[\Psi_t])=0, \qquad  \mathcal{D}'\left( (0, T)\times\mathbb{R}^2 \right) \\
\sigma_{|t=0}=\bar\sigma_{0}.
\end{cases}
\end{equation}
\end{thm}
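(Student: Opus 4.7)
The plan is to implement the Ambrosio--Gangbo style discretization scheme alluded to in the introduction. Fix $N\geq 1$, set $\tau:=T/N$ and $t_k:=k\tau$, and build inductively three families $\{\sigma^N_t\}_{t\in[0,T]}$, $\{\Psi^N_{t_k}\}_{k=0}^N$, $\{\varrho^N_{t_k}\}_{k=0}^N$. Put $\sigma^N_0:=\sigma_0$ and $(\Psi^N_0,\varrho^N_0):=(\Psi_0,\varrho_0)$. Given $\sigma^N_{t_k}\in\mathscr{P}^{ac}(\mathbb{R}^2)$ with compact support, Proposition \ref{change of variables} produces a unique monotone minimizer $\varrho^N_{t_k}$ of $\calK[\sigma^N_{t_k}]$ together with a maximizer $(P^N_{t_k},\Psi^N_{t_k})\in\calU_0$ of $\calJ[\sigma^N_{t_k}]$ jointly solving the equivalent system \eqref{eq: equiv problem}. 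As a $c$-transform, $\Psi^N_{t_k}$ is convex on $\calV$, and the push-forward identity $\calS[\Psi^N_{t_k}]\#\sigma^N_{t_k}=\mu_{\varrho^N_{t_k}}$ forces $\nabla\Psi^N_{t_k}\in\calW$ almost everywhere; Lemma \ref{lem: Cont.Eq time step} therefore defines $\sigma^N_t$ on $[t_k,t_{k+1}]$ as the solution of the continuity equation with frozen potential $\Psi=\Psi^N_{t_k}$.

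The crux of the scheme's consistency is propagating the support bound uniformly in $N$. Lemma \ref{lem: Cont.Eq time step} gives $\spt(\sigma^N_t)\subset B^+_{l_t}$ with $l_{t_{k+1}}\leq l_{t_k}+M\sqrt{4l_{t_k}+1}\,\tau$. Writing $b_k:=\sqrt{4l_{t_k}+1}\geq 1$, this rearranges to $b_k^2\leq b_{k-1}^2(1+4M\tau/b_{k-1})\leq b_{k-1}^2(1+4M\tau)$, so $b_k\leq b_{k-1}(1+2M\tau)$ by $\sqrt{1+x}\leq 1+x/2$. Iterating, $b_N\leq b_0(1+2M\tau)^N\leq b_0\,e^{2MT}$, whence $4l_t+1\leq(4l_0+1)e^{4MT}$, and the hypothesis $e^{4MT}(4l_0+1)<l+1$ yields $\spt(\sigma^N_t)\subset B_l^+$ for every $N$ and $t\in[0,T]$. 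Simultaneously Lemma \ref{lem: Cont.Eq time step}(a)--(c) supplies, uniformly in $N$ and $t$, an $L^r$ bound on $d\sigma^N_t/d\calL^2$ for every $r\geq 1$ and a $W_1$-Lipschitz estimate with constant $C_0(l)=M\sqrt{4l+1}$.

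I would then pass to the limit. By Arzel\`a--Ascoli in the compact space $(\mathscr{P}(\bar B_l^+),W_1)$, extract a (non-relabelled) subsequence and $t\mapsto\sigma_t$ with $W_1(\sigma^N_t,\sigma_t)\to 0$ uniformly on $[0,T]$; Dunford--Pettis together with the $L^r$ bound gives $\sigma_t\in\mathscr{P}^{ac}(\mathbb{R}^2)$. For each $t$, Proposition \ref{change of variables} produces the unique monotone $\varrho_t$ and a maximizer $(P_t,\Psi_t)\in\calU_0$ solving \eqref{eq: equiv problem} for $\sigma_t$. Writing $t_{k(t)}$ for the last grid point $\leq t$, the stability result Proposition \ref{prop: stability transport} applied along the subsequence yields, for each fixed $t$, uniform convergence $\Psi^N_{t_{k(t)}}\to\Psi_t$ on $\bar B_l$ and $\calS[\Psi^N_{t_{k(t)}}]\to\calS[\Psi_t]$ Lebesgue-a.e. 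The continuity of $F_0,F_1$ and the explicit structure of \eqref{eq: velocity field in dual space} upgrade this to $V_t[\Psi^N_{t_{k(t)}}]\to V_t[\Psi_t]$ a.e.\ in $\q$.

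The conclusion is obtained by passing to the limit in the weak formulation
\begin{equation*}
\int_0^T\!\!\int_{\mathbb{R}^2}\bigl(\partial_t\zeta+\langle\nabla_\q\zeta,V_t[\Psi^N_{t_{k(t)}}]\rangle\bigr)\,\sigma^N_t(d\q)\,dt=0,\qquad\zeta\in C_c^\infty((0,T)\times\mathbb{R}^2).
\end{equation*}
The uniform velocity bound \eqref{eq: velocity bound}, the uniform $L^r$ bound on densities, and dominated convergence dispose of the flux term, while the time-derivative term is handled by the narrow convergence $\sigma^N_t\to\sigma_t$ for each $t$. Lower semicontinuity of $W_1$ transfers the Lipschitz bound, placing $\sigma$ in $AC_1(0,T;\mathscr{P}(\mathbb{R}^2))$. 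The main obstacle will be precisely this passage in the velocity term: Proposition \ref{prop: stability transport} gives stability of the optimal potentials only at a fixed $t$, whereas Fubini under the double integral demands a joint $(t,\q)$ a.e.\ statement. I would resolve this through a diagonal argument exploiting the uniqueness in Proposition \ref{change of variables}(ii), which removes the subsequence dependence on $t$, combined with the uniform $W_1$-equicontinuity of $t\mapsto\sigma^N_t$ to promote pointwise-in-$t$ a.e.-in-$\q$ convergence to $\calL^1\otimes\calL^2$-a.e.\ convergence on $(0,T)\times B_l$.
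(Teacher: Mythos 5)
Your proposal is correct and follows essentially the same route as the paper: the same time-discretization freezing $\Psi$ on each subinterval via Lemma \ref{lem: Cont.Eq time step}, with Proposition \ref{change of variables} supplying $(\Psi^c_{k\tau},\Psi_{k\tau},\varrho_{k\tau})$ at the grid points, the same uniform support bound exploiting $e^{4MT}(4l_0+1)<l+1$, the same $L^r$/$W_1$-Lipschitz compactness with Dunford--Pettis, and the same passage to the limit in the velocity term through the stability result of Proposition \ref{prop: stability transport}. The only differences are cosmetic: your recursion on $\sqrt{4l_k+1}$ is a slightly sharper version of the paper's induction on $4l_k+1$, and your explicit attention to upgrading pointwise-in-$t$ convergence of the velocities to joint $(t,\q)$-a.e.\ convergence is a refinement of a step the paper treats implicitly.
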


\proof{} Let $N$ be a positive integer. We divide the interval $[0,T]$ into $N$ sub-intervals, each of length $\tau=\frac{T}{N}$. We consider $\sigma_t^N$ on $[0,T]$ defined as follows: $\sigma^{N}_{|t=0}=\bar\sigma_0$ and $\sigma^{N}$ solves 
\eqref{eq:cont.Eq0} on $[0,\tau)$ for $t_0=0$ and $\Psi=\Psi_0$ thanks to lemma \ref{lem: Cont.Eq time step}. To construct $\sigma_t^N$ on  $[\tau,2\tau)$ we first choose $\Psi_{\tau}$ and $\varrho_{\tau}$  such that $(\Psi_{\tau}^c,\Psi_{\tau},\varrho_{\tau})$ solves \eqref{eq: equiv problem} when $\sigma$ is replaced by $\bar\sigma_0$. Then, $\sigma_t^N$ is obtained on  $[\tau,2\tau)$ as a solution of \eqref{eq:cont.Eq0} for $t_0=\tau$ and $\Psi=\Psi_{\tau}$  thanks to lemma \ref{lem: Cont.Eq time step}. We repeat this process $(N-2)$ more times on the intervals $[k\tau,(k+1)\tau)$, $2\leq k\leq N-1$ by choosing $(\Psi_{k\tau}^c,\Psi_{k\tau},\varrho_{k\tau})$ as a solution to \eqref{eq: equiv problem} when $\sigma$ is replaced by $\sigma_{k\tau}$. We point out that proposition \ref{change of variables}  guarantees the existence of  $(\Psi_{k\tau},\varrho_{k\tau})$  provided that the support of $\sigma_{k\tau}$ is bounded. We also point out that lemma \ref{lem: Cont.Eq time step} ensures that $\left\lbrace \sigma_{k\tau}^N\right\rbrace_{k=1}^{N}\subset \mathscr{P}^{ac}\left(  \mathbb{R}^2)\right)$ with $\spt(\sigma_{k\tau})\subset B_{l_{k\tau}}$, where $l_{(k+1)\tau}\leq l_{k\tau}+ C(l_{k\tau})\tau $ for $0\leq k\leq N-1$. In light of this construction,
$\sigma_t^N$  satisfies
\begin{equation}\label{eq: delayed cont.ed} 
\begin{cases}
 \frac{\partial\sigma^{N}_t}{\partial t}+ \div ({\sigma^{N}_t\bf v}^N_t)=0, \qquad  \calD'\left( (0,T)\times\mathbb{R}^2 \right) \\
\sigma^N_{|t=0}=\bar\sigma_0.
\end{cases}
\end{equation}
Here, ${\bf v}^N_t= V_t[\Psi_{k\tau}]$ for $k\tau\leq t<(k+1)\tau$. We next show that the support of $\sigma^N$ is uniformly bounded independently of $N$ provided that  $e^{4MT}(4l_0+1)<l+1$. To that aim, we write $l_k=l_{k\tau}$ for simplicity. We thus have 
\begin{equation}\label{eq: }
l_{k+1}\leq l_{k}+C_0(l_k)\tau\leq l_{k}+M(4l_k+1)\tau= (4M\tau+1)l_{k} +M\tau. 
\end{equation}
By an inductive argument, we easily show that 
$$4l_{k+1}\leq (4M\tau+1)^k\left(4l_0+1 \right)-1. $$
For $k=N$, we get 
$$4l_{N+1}\leq \left( \frac{4MT}{N}+1\right)^N\left(4l_0+ 1 \right)-1\leq e^{4MT}\left(4l_0+1 \right)-1<l. $$

In light of lemma \ref{lem: Cont.Eq time step} (a), the construction above yields 
\begin{equation}\label{eq: Lr estimate}
\int_{\mathbb{R}^2}\left(  \frac{\partial \sigma^N_t}{\partial \calL^2}\right)^r d\q\leq \int_{\mathbb{R}^2} \left(  \frac{\partial \bar\sigma_{0}}{\partial \calL^2}\right)^{r}d\q
\end{equation}
  for any  $ r\geq 1 $ and $t\in [0, T]$. We use lemma \ref{lem: Cont.Eq time step}(c) to obtain that $\sigma_{N}$ is $C_0(l)$-Lipschitz continuous on $[0,T]$. Since  $\sigma^N$ satisfies (iii) and $\sigma^{N}_{|t=0}=\bar\sigma_0$ for all $N>0$, in light of standard compactness results, we assume without loss of generality that $\left\lbrace \sigma_t^N\right\rbrace_{N=1}^{\infty}$ converges narrowly to 
some $\sigma_t\in \mathscr{P} \left( \mathbb{R}^2\right)$. As $\sigma_t^N$ satisfies \eqref{eq: Lr estimate}, the Dunford-Pettis theorem yields that $\sigma_t\in \mathscr{P}^{ac} \left( \mathbb{R}^2\right)$. The narrow convergence of $\left\lbrace \sigma_t^N\right\rbrace_{N=1}^{\infty}$ combined with  weak semi-continuity of the $W_1$ leads to  $C_0(l)$-Lipschitz continuous on $[0,T]$. Define $\bar\sigma_t^N$  by $\bar\sigma_t^N:=\sigma_{k\tau}$ for $t\in [k\tau, (k+1)\tau)$, $0\leq k\leq N-1$.
We note that 
$$ W_1\left(\sigma^{N}_t,\bar{\sigma}^{N}_t \right)=  W_1\left(\sigma^{N}_t,\sigma_{\tau} \right)\leq | t-k\tau| \leq \frac{T}{N}, \qquad \text{for} \; t\in [k\tau, (k+1)\tau). $$ 
It follows that, as $\left\lbrace \sigma_t^N\right\rbrace_{N=0}^{\infty}$ converges narrowly to $\sigma_t$, $\left\lbrace \bar\sigma_t^N\right\rbrace_{N=1}^{\infty}$ converges narrowly to $\sigma_t$ for $t\in[0,T]$. Consequently, $\left\lbrace {\bf v}_t^N\right\rbrace_{N=0}^{\infty}$ converges to $V_t[\Psi]\quad \calL^1-a.e\; t\in [0,T]$,  by using proposition \ref{prop: stability transport}. As $\left\lbrace {\bf v}^N_t\right\rbrace_{N=1}^{\infty}$ is uniformly bounded in the $L^{\infty}(\mathbb{R}^2)$ and $\left\lbrace \sigma_t^N\right\rbrace_{N=0}^{\infty}$ satisfies \eqref{eq: Lr estimate}, we have that $\left\lbrace {\bf v}^N_t\sigma_t^N\right\rbrace_{N=1}^{\infty}$ converges in the sense of distributions to $V_t[\Psi_t]\sigma_t$ for a.e $t\in [0,T]$. Thus, we have that $t\rightarrow\sigma_t$ solves \eqref{eq:cont.Eq1}. \endproof

%
%
%
%
%

\section*{Acknowledgments}
Marc Sedjro is supported by the Alexander Von Humblodt Foundation. The project on which this publication is based has been carried out with funding from
the German Federal Ministry of Education and Research under project reference
number 01DG15010. 
%



\end{document}